\newtheorem{theorem}{Theorem}[section]
\newtheorem{lemma}[theorem]{Lemma}
\theoremstyle{definition}
\newtheorem{definition}[theorem]{Definition}
\newtheorem{remark}[theorem]{Remark}
\numberwithin{equation}{section}
\title[A Harnack estimate for supersolutions]{A weak Harnack estimate for supersolutions to the porous medium equation}
\author[Pekka Lehtel\"a]{Pekka Lehtel\"a}
\thanks{The research is supported by the Emil Aaltonen Foundation.}
\newcommand\rn{\mathbb R^n}
\newcommand\re{\mathbb R}
\newcommand\n{\mathbb N}
\newcommand\bd{\partial}
\newcommand\ph{\varphi}
\newcommand\eps{\varepsilon}
\newcommand\supp{\operatorname{supp}}
\newcommand\sgn{\operatorname{sign}}
\DeclareMathOperator*{\esssup}{ess\,sup}
\DeclareMathOperator*{\essinf}{ess\,inf}
\newcommand\dx {\, d}
\providecommand{\ch}[1]{\text{\raise 2pt \hbox{$\chi$}\kern-0.2pt}_{#1}}
\providecommand{\vint}[1]{\mathchoice
          {\mathop{\vrule width 5pt height 3 pt depth -2.5pt
                  \kern -9pt \kern 1pt\intop}\nolimits_{\kern -5pt{#1}}}%
          {\mathop{\vrule width 5pt height 3 pt depth -2.6pt
                  \kern -6pt \intop}\nolimits_{\kern -3pt{#1}}}%
          {\mathop{\vrule width 5pt height 3 pt depth -2.6pt
                  \kern -6pt \intop}\nolimits_{\kern -3pt{#1}}}%
          {\mathop{\vrule width 5pt height 3 pt depth -2.6pt
                  \kern -6pt \intop}\nolimits_{\kern -3pt{#1}}}}
\begin{document}

\begin{abstract}
In this work, we prove a weak Harnack estimate for the weak supersolutions to the porous medium equation. The proof is based on a priori estimates for the supersolutions and measure theoretical arguments. 
\end{abstract}

\subjclass[2010]{Primary 35K65, Secondary 35B45, 35K10}

\keywords{Porous medium equation, weak supersolutions, weak Harnack estimates, a priori estimates}

\date{\today}  
\maketitle

\section{Introduction}
In this paper, we will prove a weak Harnack estimate for the weak supersolutions to the porous medium equation
\begin{equation}
  \label{eq:PME}
  u_t - \Delta u^m =0 \quad\text{in } \Omega_{T_0},
\end{equation}
where $\Omega_{T_0}=\Omega\times (0,T_0)$ and $\Omega\subset \rn$ is a bounded domain. In this work, we consider only the degenerate case $m>1$. In the singular case $m<1$, a different proof is needed. For the general theory of the equation, we refer to \cite{daskalopoulos}, \cite{vazquez} and \cite{nonlinear diffusion}.
 
We consider the weak supersolutions, which are defined in the usual way, with test functions under the integral sign, as weak solutions to the inequality 
\[
u_t- \Delta u^m \ge 0.
\]
Throughout the work, we assume, that the weak supersolutions are non-negative. Properties of supersolutions to the PME are considered in \cite{supersolutions to PME} and \cite{unbounded supersolutions}. In the latter one, also unbounded supersolutions, which are defined via comparison to weak solutions, are treated. In the case of the evolutionary $p$-Laplace equation, some interesting phenomena are observed in the case of unbounded supersolutions, see \cite{shadows}. However, in the theory of the porous medium equation, there is a missing link, namely the weak Harnack estimate for the supersolutions, which is the main result of this work.
\begin{theorem}\label{main theorem}
Let $u>0$ be a weak supersolution in $\Omega_{T_0}\supset B(x_0,8\rho)\times(0,T_0)$. Then, there exist constants $C_1,C_2>0$ depending on $m$ and $n$, such that for almost every $t_0\in(0,T_0)$, the following inequality holds
\[
\fint_{B(x_0,\rho)} u(x,t_0)\dx x \le   \left( \frac{C_1 \rho^2}{T_0-t_0}\right ) ^{1/(m-1)} + C_2 \essinf_Q u,
\]
where 
\begin{align*}
&Q=B(x_0,4\rho) \times (t_0+\tau/2, t_0+\tau)\quad \text{and}\\
&\tau=\min \left\{ T_0-t_0, C_1\rho^2 \left ( \fint_{B(x_0,\rho)} u(x,t_0) \dx x \right )^{-(m-1)}\right\}.
\end{align*}
\end{theorem}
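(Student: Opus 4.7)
The claim is a weak Harnack estimate for PME supersolutions with the intrinsic time scale $\tau \sim \rho^{2}/\bar u^{m-1}$, and the natural route is expansion of positivity in the DiBenedetto--Gianazza--Vespri tradition. By translation assume $(x_0,t_0)=(0,0)$ and set $A:=\fint_{B(0,\rho)}u(x,0)\dx x$. If $C_{1}\rho^{2}A^{-(m-1)}>T_{0}$ then $A>(C_{1}\rho^{2}/T_{0})^{1/(m-1)}$ and the first term of the claim already dominates; hence we may assume $\tau=C_{1}\rho^{2}A^{-(m-1)}$ and must prove $A\le C_{2}\essinf_{Q}u$.

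The first ingredient is an $L^{1}$-mass propagation lemma. Testing the weak supersolution inequality against a time-independent cutoff $\varphi\in C_{c}^{\infty}(B(0,2\rho))$ with $\varphi\equiv 1$ on $B(0,\rho)$ and $|\Delta\varphi|\le C\rho^{-2}$ yields
\[
\frac{d}{dt}\int u\,\varphi\dx x \;\ge\; -\frac{C}{\rho^{2}}\int_{B(0,2\rho)} u^{m}\dx x.
\]
On the intrinsic time scale $\tau$, a Caccioppoli-type bound controlling $u^{m}$ by spatial averages of $u$ shows that the right-hand side cannot empty the initial $L^{1}$-mass too rapidly: there are $c_{0},c_{1}\in(0,1)$, depending only on $m$ and $n$, with
\[
\fint_{B(0,2\rho)} u(x,t)\dx x \;\ge\; c_{0}A \qquad\text{for all } t\in(0,c_{1}\tau).
\]

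The second ingredient upgrades this $L^{1}$-information to a pointwise bound. A Chebyshev-type argument applied to the preceding inequality supplies a subcylinder $Q'\subset B(0,4\rho)\times(c_{1}\tau/2,c_{1}\tau)$ on which $|\{u\ge c_{2}A\}\cap Q'|\ge c_{3}|Q'|$. A De Giorgi iteration on the truncations $(c_{2}A-u)_{+}$, using the standard Caccioppoli estimate for PME supersolutions, parabolic Sobolev embedding, and the intrinsic scaling $\tau\sim\rho^{2}A^{-(m-1)}$, then converts this measure information into the essential lower bound $u\ge\eta A$ a.e. on $Q$, for some $\eta=\eta(m,n)>0$. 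Taking $C_{2}=1/\eta$ completes the argument.

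The main technical obstacle is calibrating the intrinsic time scale. The Caccioppoli inequality for PME supersolutions naturally carries the nonlinear factor $u^{m-1}$ on the time-derivative term, so unless each subcylinder used in the iteration has its time extent rescaled to the current threshold $k$, the De Giorgi iteration loses constants and fails to close. The precise value of $\tau$ in the statement is exactly what makes the mass-propagation step and the De Giorgi iteration dovetail with constants $C_{1}$ and $C_{2}$ that depend only on $m$ and $n$; keeping this bookkeeping consistent across both steps is the chief burden of the proof.
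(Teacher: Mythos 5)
Your outline follows the right general tradition (intrinsic scaling plus expansion of positivity), but it hides the two genuinely hard points of the proof inside unproved assertions. The first gap is the mass-propagation step: after testing with the spatial cutoff you need to control $\iint u^m$ (equivalently $\iint|\nabla u^m\cdot\nabla\varphi|$ in the paper's inequality \eqref{integration by parts in time}) by the spatial $L^1$ average of $u$, with constants depending only on $m$ and $n$. No such ``Caccioppoli-type bound controlling $u^m$ by spatial averages of $u$'' is available for general nonnegative supersolutions: supersolutions are not locally bounded, and their energy estimates control $\nabla u^m$ only in terms of integrals of powers of $u$ strictly higher than $1$, so the claimed bound is essentially a reverse H\"older inequality that must itself be earned. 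This is exactly why the paper runs the hot/cold dichotomy of Kuusi: either at some time level a measure lower bound $|\{u>k^{1+1/d}\}|\ge k^{-1/d}|B|$ holds (hot), and one goes directly to expansion of positivity, or its negation holds for \emph{all} $k$ and a.e.\ $t$ (cold), and only then do the measure-decay hypothesis, the reverse H\"older iteration (Lemmas \ref{weak RHI}--\ref{uniform Lq bound}) and the gradient bound (Lemma \ref{uniform gradient bound}) give the uniform integrability needed to make your mass-propagation inequality (Lemma \ref{average essinf lemma}) close. Without this dichotomy your first ingredient has no proof.

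The second gap is the passage from ``$u\ge c_2A$ on a fixed fraction $c_3$ of a subcylinder'' to ``$u\ge\eta A$ a.e.\ on $Q=B(x_0,4\rho)\times(t_0+\tau/2,t_0+\tau)$''. A De Giorgi iteration on $(c_2A-u)_+$ only closes when the measure of the bad set is below a small critical threshold $\nu$, not when the good set merely occupies a fixed (possibly small) fraction; and the positivity must also be spread from a ball of radius $\rho$ (or smaller, via the clustering Lemma \ref{measure theoretical lemma}) out to $B(x_0,4\rho)$, landing precisely in the time window $(\tau/2,\tau)$ after a waiting time. Bridging these requires the time-propagation lemma, the exponential change of variables $w(x,\tau)=g(\Lambda^{-1}(\tau))u(x,\Lambda^{-1}(\tau))$ that compensates the decay during the waiting time, the measure-shrinking Lemma \ref{nu lemma}, and the iterated expansion Lemmas \ref{u expansion}--\ref{expansion of positivity} (the clustering lemma is what turns the exponential dependence on the measure fraction into a power-like one, which the final scaling argument needs). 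Your sketch treats all of this as a single ``standard De Giorgi iteration'', which is where the argument would actually fail to close.
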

Apart from the classification of unbounded supersolutions, the Harnack estimates play a significant role in the regularity theory of partial differential equations. The parabolic Harnack estimates have attracted a lot of interest since the result of Moser in \cite{moser}. More recently, the Harnack's inequality for the weak solutions to $p$-Laplace type equations was proved in \cite{harnack1}. Later, the weak Harnack estimate for weak supersolutions to $p$-Laplace type equations was proved in \cite{kuusi}. Finally, the various Harnack estimates for nonlinear parabolic equations are collected in \cite{harnack2}, where also the result for weak supersolutions to the PME is presented. 

Even though the result is probably known to experts, it seems to be difficult to find a reference with a complete proof. The purpose of this work is to present a proof for the weak Harnack estimate in full detail. As pointed out in \cite{harnack2}, the structure of the proof is similar to the one of $p$-Laplace type equations. However, there are numerous issues to be taken care of. For instance, constants cannot be added to solutions. Moreover, in the energy estimates, usually we can only control the norm of $\nabla u^m$, which raises some technical challenges in the arguments. 

One novelty of our proof is that we are able to bypass the assumption $u>\delta>0$ in the Caccioppoli estimates and assume only $u>0$. This is done by choosing a clever test function, introduced in the context of doubly nonlinear equation in \cite{ivert}. Thus we are able to prove the Harnack estimate for positive supersolutions directly, without approximation by supersolutions, that are bounded away from zero.

\section{Preliminaries}
Throughout the work, we will denote a bounded domain in $\rn$ by $\Omega$. Moreover, we make a technical assumption $B(x_0,8\rho)\subset \Omega$. We work with the space-time cylinders $\Omega_{T_0}=\Omega\times (0,T_0) \subset \re^{n+1}$. The parabolic boundary $\bd_p U$ of a space-time cylinder $U=B\times (t_1,t_2)$ is defined as $\bd_p U = B\times \{t_1\} \cup \bd B \times (t_1,t_2)$. Similarly, the conjugate parabolic boundary is defined as $\bd^p U = B\times \{t_2\} \cup \bd B \times (t_1,t_2)$.

\begin{definition}
  A function $u\in L^2_{loc}(0,T; H^1_{loc}(\Omega))$ is a weak supersolution to \eqref{eq:PME}, if $u^m\in L^2_{loc}(0,T;H^1_{loc}(\Omega))$ and $u$ satisfies
\[
\iint_{\Omega_T}\big(-u \ph_t + \nabla u^m \cdot \nabla \ph \big) \dx x \dx t \ge 0
\]
for all test functions $\ph\in C_0^\infty(\Omega_T)$, such that $\ph\ge 0$. 
\end{definition}

First, we will prove some Caccioppoli type estimates for the supersolutions. In order to prove the estimates, we need to use a test function depending on the supersolution $u$ itself. However, no regularity for $u$ is assumed in the time variable, and thus we need to regularize the function. We will use the averaged function
\begin{equation}\label{averaged function}
u^*(x,t)= \frac 1 \sigma \int_0^t e^{\frac{s-t}\sigma} u(x,s) \dx s 
\end{equation}
to avoid the possibly nonexistent quantity $u_t$. Now, the function $u^*$ satisfies the following inequality
\begin{equation}\label{averaged eq}
\int_{\tau_1}^{\tau_2} \int_\Omega \left(  (\nabla u^m)^* \cdot \nabla \ph + \ph \frac{\partial u^*}{\partial t}\right)  \dx x \dx t \ge 0
\end{equation}
for every non-negative test function $\ph\in L^2(0,T;H_0^1(\Omega))$. For the properties of $u^*$, we refer to \cite{supersolutions to PME}. 

\begin{lemma}\label{caccioppoli}
  Let $u$ be a non-negative weak supersolution in $\Omega_T$ and let $\zeta\in C_0^\infty(\Omega_T)$ be a smooth cut-off function, such that $0\le \zeta \le 1$. Then for any $k\in \re$ the following holds.
  \begin{align*}
    \int_0^T \int_\Omega u^{m-1} |\nabla (u-k)_-|^2 \zeta^2 \dx x \dx t + \esssup_{t\in(0,T)} \int_\Omega \zeta^2 (u-k)_-^2 \dx x\\
\le C \left ( \int_0^T \int_\Omega (u-k)_-^2 \zeta |\zeta_t| \dx x \dx t + \int_0^T \int_\Omega u^{m-1} (u-k)_-^2 |\nabla \zeta |^2 \dx x \dx t\right ).
  \end{align*}
\end{lemma}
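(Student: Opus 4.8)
The plan is to test the regularized inequality \eqref{averaged eq} with a test function of the form $\ph = -((u^m)^* - k^m)_- \zeta^2$ — or, following the trick from \cite{ivert} alluded to in the introduction, with a variant that avoids requiring $u \ge \delta > 0$. The key point raised in the introduction is that we only control $\nabla u^m$, not $\nabla u$, so the natural truncation is at the level of $u^m$; one then needs to convert $|\nabla ((u^m)^* - k^m)_-|^2$ into something comparable to $u^{m-1}|\nabla(u-k)_-|^2$ using that $\nabla u^m = m u^{m-1}\nabla u$ pointwise where $u > 0$, and that on the set $\{u < k\}$ one has $u^{m-1} \le k^{m-1}$ — but since constants multiply rather than add here, care is needed and the cleaner route is to keep everything in terms of $u^m$ and only at the very end use the elementary inequality relating $(a^m - b^m)$ and $a^{m-1}(a-b)$ for $0 \le b \le a$. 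First I would write down the tested inequality, split it into the parabolic (time-derivative) term, the diffusion term, and the terms where derivatives hit $\zeta^2$.

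The parabolic term $\int \zeta^2 \frac{\partial u^*}{\partial t}\,((u^m)^* - k^m)_-$ is the one that must produce the $\esssup_t \int \zeta^2 (u-k)_-^2$ on the left. Here I would integrate by parts in $t$ (legitimate after regularization), move the time derivative onto the product, and use a Young's inequality together with the mean-value / monotonicity relation between $u^* - k$ and $(u^m)^* - k^m$ to recognize a term of the form $\frac{d}{dt}\int \zeta^2 \Phi$ for a suitable primitive $\Phi$ comparable to $(u-k)_-^2$; the remaining piece carrying $\zeta \zeta_t$ goes to the right-hand side. The diffusion term $\int (\nabla u^m)^* \cdot \nabla(-((u^m)^* - k^m)_- \zeta^2)$ splits into a good term $\int \zeta^2 |\nabla ((u^m)^*-k^m)_-|^2$-type quantity (after passing $\sigma \to 0$ so that $(\nabla u^m)^*$ and $\nabla (u^m)^*$ may be identified with $\nabla u^m$) and a cross term $\int 2\zeta \nabla\zeta \cdot (\ldots)$, which is absorbed by Young's inequality into the good term plus a multiple of $\int u^{m-1}(u-k)_-^2 |\nabla\zeta|^2$ on the right. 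Throughout, one takes $\tau_1 \to 0$ and $\tau_2 \to t$ and passes $\sigma \to 0$ using the standard convergence properties of $u^*$ from \cite{supersolutions to PME}.

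The main obstacle I anticipate is the bookkeeping around the fact that one cannot add constants to solutions and that the natural energy is in $u^m$ while the claimed estimate is stated in terms of $u$: converting $|\nabla((u^m)^* - k^m)_-|^2$ into $u^{m-1}|\nabla(u-k)_-|^2$ and the time-primitive of $(u^m)^* - k^m$ into $(u-k)_-^2$ requires careful use of the elementary inequalities, valid for $0 \le b \le a$,
\[
c(m)\,a^{m-1}(a-b)^2 \le (a^m - b^m)(a - b) \le C(m)\,a^{m-1}(a-b)^2,
\]
together with the Ivert-type test function so that no lower bound $u \ge \delta$ is invoked and all integrals remain finite (the factor $u^{m-1}$ degenerating as $u \to 0$ is exactly where $(u-k)_-$ vanishes). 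Once these pointwise comparisons are in place, the rest is a routine absorption argument and a limit passage, and the constant $C$ depends only on $m$.
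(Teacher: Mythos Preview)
Your plan takes a detour that the paper avoids, and the detour has a real gap.

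The paper's proof is much simpler: it tests \eqref{averaged eq} with $\ph=(u^*-k)_-\,\zeta^2$, truncating $u$ itself, not $u^m$. This is legitimate because the paper's definition already assumes $u\in L^2_{\loc}(0,T;H^1_{\loc}(\Omega))$, so $\nabla u$ (and hence $\nabla(u^*-k)_-$) is available; your premise that ``we only control $\nabla u^m$'' is not correct here. With this choice the time term is exactly $-\tfrac12\partial_t(u^*-k)_-^2$, producing the $\esssup$ and $\zeta\zeta_t$ terms directly; the diffusion term becomes, after $\sigma\to0$,
\[
m\,u^{m-1}\nabla u\cdot\nabla(u-k)_-\,\zeta^2=-m\,u^{m-1}|\nabla(u-k)_-|^2\zeta^2,
\]
which is precisely the left-hand side gradient term; and the cross term is absorbed by Young. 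No conversion between $u$ and $u^m$ is needed at all, and no Ivert-type dampening enters --- that trick is used for the \emph{other} Caccioppoli estimate (Lemma~\ref{caccioppoli2}), where the test function carries a negative power of $u$.

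Your route, testing with $((u^m)^*-k^m)_-\zeta^2$, runs into trouble. First a sign: $-((u^m)^*-k^m)_-$ is nonpositive, so as written it is not an admissible test function. More seriously, after the limit the diffusion term yields (up to constants) $u^{2(m-1)}|\nabla(u-k)_-|^2$ on the left, while the time primitive is comparable to $k^{m-1}(u-k)_-^2$. On the set $\{u<k\}$ one has $u^{2(m-1)}\le k^{m-1}u^{m-1}$, but this inequality goes the wrong way: it bounds your good gradient term from \emph{above} by the one you want, not from below. The algebraic inequality you quote, $c(m)a^{m-1}(a-b)^2\le(a^m-b^m)(a-b)$, controls products of differences, not the ratio $u^{2(m-1)}/u^{m-1}$, and does not rescue this step. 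Recovering $\int u^{m-1}|\nabla(u-k)_-|^2$ from $\int u^{2(m-1)}|\nabla(u-k)_-|^2$ genuinely requires a lower bound on $u$, which is exactly what you said you wanted to avoid. Dropping the $u^m$-truncation in favor of the direct test function $(u^*-k)_-\zeta^2$ removes all of these obstacles at once.
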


\begin{proof}
Take $\tau_1,\tau_2\in (0,T)$ such that $\tau_1<\tau_2$ and let $u^*$ denote the averaged function, defined in \eqref{averaged function}.  Take a test function $\ph=(u^*-k)_- \zeta^2$ in \eqref{averaged eq}, where $\zeta$ is a smooth cut-off function, such that $0\le \zeta \le 1$. Thus we have
\begin{align*}
0\le  &\int_{\tau_1}^{\tau_2} \int_\Omega   (\nabla u^m)^* \cdot \nabla(u^*-k)_-\zeta^2 \dx x \dx t  \\ +& \int_{\tau_1}^{\tau_2} \int_\Omega 2 (u^*-k)_- \zeta (\nabla u^m)^* \cdot \nabla \zeta \dx x \dx t
+ \int_{\tau_1}^{\tau_2} \int_\Omega(u^*-k)_-\zeta^2 \frac{\partial u^*}{\partial t}  \dx x \dx t\\
=& I_1 + I_2 + I_3.
\end{align*}
We observe, that
\begin{align*}
I_3&=\int_{\tau_1}^{\tau_2} \int_\Omega (u^*-k)_- \zeta^2 \frac{\partial u^*}{\partial t}  \dx x \dx t = -\frac 1 2 \int_{\tau_1}^{\tau_2} \int_\Omega  \zeta^2 \frac{\partial}{\partial t}(u^*-k)_-^2  \dx x \dx t\\
&=\frac 1 2  \int_\Omega  \zeta(x,\tau_1)^2 (u(x,\tau_1)^*-k)_-^2  \dx x-\frac 1 2  \int_\Omega  \zeta(x,\tau_2)^2 (u(x,\tau_2)^*-k)_-^2  \dx x \\
&+ \int_{\tau_1}^{\tau_2} \int_\Omega (u^*-k)_-^2 \zeta \zeta_t \dx x \dx t. 
\end{align*}
Thus, we may let $\sigma\rightarrow 0$ to get
\begin{align*}
  I_3\rightarrow &\frac 1 2  \int_\Omega  \zeta(x,\tau_1)^2 (u(x,\tau_1)-k)_-^2  \dx x-\frac 1 2  \int_\Omega  \zeta(x,\tau_2)^2 (u(x,\tau_2)-k)_-^2  \dx x \\
&+ \int_{\tau_1}^{\tau_2} \int_\Omega (u-k)_-^2 \zeta \zeta_t \dx x \dx t. 
\end{align*}
Now, we may write $I_1$ as 
\begin{align*}
I_1&\rightarrow \int_{\tau_1}^{\tau_2} \int_\Omega  m u^{m-1} \nabla u \cdot \nabla(u-k)_-\zeta^2 \dx x \dx t  \\
&= - \int_{\tau_1}^{\tau_2} \int_\Omega  m u^{m-1} | \nabla(u-k)_-|^2\zeta^2 \dx x \dx t.  
\end{align*}
Finally, we will use Young's inequality to control $I_2$. We have
\begin{align*}
\lim_{\sigma\rightarrow 0}  |I_2|&\le 2m\int_{\tau_1}^{\tau_2} \int_\Omega u^{m-1}(u-k)_- |\nabla (u-k)_-| \zeta|\nabla \zeta| \dx x \dx t\\
&\le \frac m 2 \int_{\tau_1}^{\tau_2} \int_\Omega u^{m-1} |\nabla (u-k)_-|^2 \zeta^2 \dx x \dx t \\
&+ 2m \int_{\tau_1}^{\tau_2} \int_\Omega u^{m-1} (u-k)_-^2 |\nabla \zeta |^2 \dx x \dx t.
\end{align*}
Collecting the facts, we get 
\begin{align}\label{almost caccioppoli}
  &\int_{\tau_1}^{\tau_2} \int_\Omega   u^{m-1} | \nabla(u-k)_-|^2\zeta^2 \dx x \dx t + \int_\Omega  \zeta(x,\tau_2)^2 (u(x,\tau_2)-k)_-^2  \dx x\nonumber\\
&-\int_\Omega  \zeta(x,\tau_1)^2 (u(x,\tau_1)-k)_-^2  \dx x \nonumber\\
&\le C\left ( \int_{\tau_1}^{\tau_2} \int_\Omega (u-k)_-^2 \zeta |\zeta_t| \dx x \dx t+ \int_{\tau_1}^{\tau_2} \int_\Omega u^{m-1} (u-k)_-^2 |\nabla \zeta |^2 \dx x \dx t\right).\nonumber\\
\end{align}
Taking the supremum over $\tau_2$ and letting $\tau_1\rightarrow 0$ concludes the proof.
\end{proof}

\begin{remark}
By choosing a test function $\ph_j=\zeta \eta_j$ in \eqref{averaged eq}, where $\zeta\in C_0^\infty(\Omega)$ and $\eta_j\in C_0^\infty ( -\eps, \tau+\eps)$, such that $\eta_j\rightarrow \chi_{[0,\tau]}$, we may integrate by parts in the time variable and let $j\rightarrow 0$ to get the inequality
\begin{equation}\label{integration by parts in time}
  \int_\Omega u(x,\tau) \zeta(x)\dx x \ge \int_\Omega u(x,0) \zeta (x)\dx x - \int_0^\tau \int_\Omega |\nabla u^m \cdot \nabla \zeta| \dx x \dx t.
\end{equation}
\end{remark}

\begin{lemma}\label{caccioppoli2}
  Let $u$ be a weak supersolution in $\Omega_T$, such that $u > 0$ and let $\zeta \in C_0^\infty (\Omega_T)$ be a smooth cut-off function, such that $0\le \zeta \le 1$. Let $\eps>0, \eps\ne 1$. Then $u$ satisfies
\begin{align*}
&\int_{0}^{T} \int_\Omega m u^{m-\eps-2} \zeta^2 |\nabla u|^2 \dx x \dx t + 
\frac 1 {\eps |1-\eps| } \esssup_{t\in (0,T)} \int_\Omega u^{1-\eps} \zeta^2 \dx x \\
&\le \frac{C_1m}{\eps^2} \int_{0}^{T} \int_\Omega u^{m-\eps} |\nabla \zeta|^2 \dx x \dx t + \frac{C_2}{\eps|1-\eps|} \int_{0}^{T} \int_\Omega u^{1-\eps} \zeta |\zeta_t| \dx x \dx t.
\end{align*}
\end{lemma}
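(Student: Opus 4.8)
The plan is to test the averaged equation~\eqref{averaged eq} with a regularised version of the test function $u^{-\varepsilon}\zeta^{2}$ used in \cite{ivert}, namely $\varphi=(u^{*}+\delta)^{-\varepsilon}\zeta^{2}$ with a small parameter $\delta>0$. The shift by $\delta$ is exactly what lets us avoid assuming $u$ bounded away from zero: since $(u^{*}+\delta)^{-\varepsilon}\le\delta^{-\varepsilon}$ and $\nabla(u^{*}+\delta)^{-\varepsilon}=-\varepsilon(u^{*}+\delta)^{-\varepsilon-1}\nabla u^{*}$ is a bounded factor times $\nabla u^{*}\in L^{2}_{loc}$, the function $\varphi$ is a legitimate nonnegative element of $L^{2}(0,T;H^{1}_{0}(\Omega))$. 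Writing $G_{\delta}(s)=(s+\delta)^{1-\varepsilon}/(1-\varepsilon)$, so that $G_{\delta}'(s)=(s+\delta)^{-\varepsilon}$ and $G_{\delta}''(s)=-\varepsilon(s+\delta)^{-\varepsilon-1}<0$, the term coming from $\partial_{t}u^{*}$ equals $\zeta^{2}\,\partial_{t}G_{\delta}(u^{*})$; an integration by parts in time over an interval with endpoints $\tau_{1}<\tau_{2}$ turns it into the two boundary integrals $\int_{\Omega}\zeta^{2}G_{\delta}(u^{*})\,dx$ at $\tau_{2}$ and at $\tau_{1}$ (with opposite signs) plus the bulk term $-\iint 2\zeta\zeta_{t}G_{\delta}(u^{*})$. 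The diffusion term splits into $\iint(\nabla u^{m})^{*}\!\cdot\nabla u^{*}\,G_{\delta}''(u^{*})\zeta^{2}$, which is $\le 0$ and, moved to the other side, becomes the good term $\varepsilon m\iint u^{m-1}(u+\delta)^{-\varepsilon-1}|\nabla u|^{2}\zeta^{2}$, plus the cross term $\iint 2(\nabla u^{m})^{*}\!\cdot\nabla\zeta\,G_{\delta}'(u^{*})\zeta$.

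First I would let $\sigma\to0$. Using that $(\nabla u^{m})^{*}\to\nabla u^{m}=mu^{m-1}\nabla u$ and $\nabla u^{*}\to\nabla u$ in $L^{2}_{loc}$, that $u^{*}\to u$ a.e., and that the weights $G_{\delta},G_{\delta}',G_{\delta}''$ are continuous and uniformly bounded on $\supp\zeta$, every integral converges to its natural limit, the boundary integrals converging for almost every choice of the endpoints $\tau_{1},\tau_{2}$. The cross term is then handled by Young's inequality, splitting
\[
u^{m-1}(u+\delta)^{-\varepsilon}|\nabla u|\,|\nabla\zeta|=\big(u^{(m-1)/2}(u+\delta)^{-(\varepsilon+1)/2}|\nabla u|\big)\big(u^{(m-1)/2}(u+\delta)^{(1-\varepsilon)/2}|\nabla\zeta|\big),
\]
so that a small multiple of $\iint u^{m-1}(u+\delta)^{-\varepsilon-1}|\nabla u|^{2}\zeta^{2}$ can be absorbed into the good term and the rest is bounded by $\tfrac{Cm}{\varepsilon}\iint u^{m-1}(u+\delta)^{1-\varepsilon}|\nabla\zeta|^{2}$. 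This absorption is justified because for fixed $\delta$ the good integral is finite: from $u^{m-1}|\nabla u|^{2}\le\tfrac12\big(u^{2m-2}|\nabla u|^{2}+|\nabla u|^{2}\big)$ and $u,u^{m}\in H^{1}_{loc}$ we get $\iint u^{m-1}|\nabla u|^{2}\zeta^{2}<\infty$, while $(u+\delta)^{-\varepsilon-1}\le\delta^{-\varepsilon-1}$. The bulk $\zeta_{t}$ term is estimated directly by $\iint 2\zeta|\zeta_{t}|\,|G_{\delta}(u)|$.

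The one genuinely delicate point is getting the time-boundary term onto the \emph{correct} side, and this forces a case split on the sign of $1-\varepsilon$. If $\varepsilon>1$ then $G_{\delta}\le0$, and I would integrate over $(0,\tau_{2})$: the boundary integral at $t=0$ vanishes since $\zeta(\cdot,0)=0$, while the one at $\tau_{2}$, being $\le 0$, moves to the left as the nonnegative quantity $\tfrac{1}{\varepsilon-1}\int_{\Omega}\zeta^{2}(\tau_{2})u(\tau_{2})^{1-\varepsilon}$; since the right-hand side no longer depends on $\tau_{2}$, taking the supremum over $\tau_{2}$ gives the $\esssup$ term, while letting $\tau_{2}\to T$ and discarding that nonnegative boundary term gives the gradient term, and adding the two resulting inequalities costs only a harmless factor $2$. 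If $\varepsilon<1$ then $G_{\delta}\ge0$ and the two endpoints exchange roles: I would integrate over $(\tau_{1},T)$, kill the $t=T$ boundary integral by $\zeta(\cdot,T)=0$, keep $\tfrac{1}{1-\varepsilon}\int_{\Omega}\zeta^{2}(\tau_{1})u(\tau_{1})^{1-\varepsilon}$ on the left, take the supremum over $\tau_{1}$, and let $\tau_{1}\to0$. In both cases I would finally send $\delta\to0$, where each $\delta$-dependent integrand converges monotonically or dominatedly to its value at $\delta=0$ (the inequality being vacuously true when the right-hand side is $+\infty$), and a division by $\varepsilon/2$ casts the estimate into the asserted form with explicit constants $C_{1},C_{2}$. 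I expect the bookkeeping of these signs and limit passages, rather than any single estimate, to be the main obstacle.
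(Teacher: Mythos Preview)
Your proof is correct and follows the same overall strategy as the paper: regularise the singular test function $u^{-\varepsilon}\zeta^{2}$, insert it into the mollified inequality, let $\sigma\to 0$, absorb the cross term via Young's inequality, split on the sign of $1-\varepsilon$ to place the time-boundary integral on the correct side, and finally pass to the limit in the regularisation parameter. The only genuine difference is the regularisation itself. The paper replaces $s^{-\varepsilon}$ by the piecewise $C^{1}$ function $H_{\lambda}(s)$, equal to $s^{-\varepsilon}$ for $s>\lambda$ and extended linearly on $[0,\lambda]$, and then sends $\lambda\to 0$; you instead shift the argument and use $(u^{*}+\delta)^{-\varepsilon}$, sending $\delta\to 0$. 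Both devices make the test function admissible without assuming $u$ is bounded below by a positive constant, and the ensuing limit passages (monotone or dominated convergence, with the inequality vacuous if the right-hand side is infinite) are entirely parallel. Your shift is arguably tidier, since it avoids the piecewise definition and the case-by-case bounds on $H_{\lambda}^{2}/|H_{\lambda}'|$ that the paper needs for dominated convergence; the paper's truncation, on the other hand, leaves the powers of $u$ unperturbed on $\{u>\lambda\}$, so one lands directly on the target integrands without the intermediate mixed factors $u^{m-1}(u+\delta)^{1-\varepsilon}$ that appear in your version.
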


\begin{proof}
 As in the proof of Lemma \ref{caccioppoli}, take $\tau_1,\tau_2\in (0,T)$ such that $\tau_1<\tau_2$ and let $u^*$ denote the averaged function, defined in \eqref{averaged function}.

 For $\lambda>0$, we define the dampening function
\[
H_\lambda(s)=
\begin{cases}
  \lambda^{-\eps}+\eps \lambda^{-1-\eps}(\lambda-s), \quad \text{if } 0 \le s \le \lambda,\\
s^{-\eps}, \quad \text{if } s>\lambda.
\end{cases}
\]
Now
\[
H'_\lambda(s)=
\begin{cases}
  -\eps\lambda^{-1-\eps}, \quad \text{if } 0 \le s \le \lambda,\\
-\eps s^{-1-\eps}, \quad \text{if } s>\lambda.
\end{cases}
\]
Thus $H_\lambda(s)$ is continuously differentiable. We denote
\[
h_\lambda(s)=
\begin{cases}
  \int_0^s H_\lambda(r) \dx r, \quad \text{if } \eps <1,\\
\int_s^\infty H_\lambda(r) \dx r, \quad \text{if } \eps >1.
\end{cases}
\] 
We choose a test function $\ph =  H_\lambda(u^*) \zeta^2$ in \eqref{averaged eq}, where $\zeta\in C_0^\infty(\Omega_T)$ is a smooth cut-off function, such that $0\le \zeta \le 1$. In order to let $\sigma\rightarrow 0$, we have to deal with the time derivative of $u^*$ appearing in \eqref{averaged eq}. We integrate by parts to get 
\begin{align*}
 & \int_{\tau_1}^{\tau_2} \int_\Omega \ph \frac {\partial u^*}{\partial t} \dx x \dx t =   \int_{\tau_1}^{\tau_2} \int_\Omega  H_\lambda(u^*) \frac {\partial u^*}{\partial t} \zeta^2\dx x \dx t = \iota \int_{\tau_1}^{\tau_2} \int_\Omega  \frac {\partial h_\lambda(u^*)}{\partial t} \zeta^2\dx x \dx t  \\
&= - \iota \int_{\tau_1}^{\tau_2} \int_\Omega h_\lambda (u^*) (\zeta^2)_t \dx x \dx t\\
& +  \iota \int_\Omega h_\lambda(u^*(x,{\tau_2})) \zeta(x,{\tau_2})^2 \dx x - \iota\int_\Omega h_\lambda(u^*(x,\tau_1)) \zeta(x,\tau_1)^2 \dx x.
\end{align*}
Here $\iota=\sgn(1-\eps)$. Now we may let $\sigma\rightarrow 0$ to obtain 
\begin{align}\label{sigma goes to zero}
 & \int_{\tau_1}^{\tau_2} \int_\Omega m u^{m-1} \nabla u \cdot \nabla \ph - \iota \int_{\tau_1}^{\tau_2} \int_\Omega  h_\lambda(u) (\zeta^2)_t \dx x \dx t \nonumber\\
&+ \iota \int_\Omega h_\lambda(u(x,{\tau_2})) \zeta(x,{\tau_2})^2 \dx x - \iota\int_\Omega h_\lambda(u(x,\tau_1)) \zeta(x,\tau_1)^2 \dx x \ge 0.\nonumber\\
\end{align}

We observe
\[
 m u^{m-1} \nabla u \cdot \nabla \ph = m u^{m-1} H'_\lambda(u)|\nabla u|^2 \zeta^2 +  2mu^{m-1}H_\lambda(u)\zeta \nabla \zeta\cdot \nabla u.
\]
We may estimate the second term on the right hand side by 
\begin{align*}
 & |2mu^{m-1}H_\lambda(u)\zeta \nabla \zeta\cdot \nabla u| \le 2mu^{m-1}H_\lambda(u)\zeta |\nabla \zeta ||\nabla u|\\
&=  m \Big(|\nabla u| \zeta u^{\frac{m-1}{2}} |H'_\lambda(u)|^{\frac 1 2 }\Big)\left(2 \frac {H_\lambda(u)}{|H'_\lambda(u)|^{\frac 12}} u^{\frac{m-1}2} |\nabla \zeta|\right)\\
&\le \frac{m}2 \left ( u^{m-1}|H'_\lambda(u)| \zeta^2 |\nabla u|^2 + 4 \frac {H_\lambda(u)^2}{|H'_\lambda(u)|} u^{m-1} |\nabla \zeta|^2\right). 
\end{align*}
Thus, we have the estimate
\begin{align*}
& -m u^{m-1}\nabla u \cdot \nabla \ph \\
&\ge -m  u^{m-1} H'_\lambda(u)|\nabla u|^2 \zeta^2 -  2m u^{m-1}H_\lambda(u)\zeta \nabla \zeta\cdot \nabla u \\
& \ge m u^{m-1} |H'_\lambda(u)||\nabla u|^2 \zeta^2 - | 2m u^{m-1}H_\lambda(u)\zeta \nabla \zeta\cdot \nabla u |\\
&\ge \frac{m}{2} u^{m-1} |H'_\lambda(u)||\nabla u|^2 \zeta^2 - 2m\frac {H_\lambda(u)^2}{|H'_\lambda(u)|} u^{m-1} |\nabla \zeta|^2.
\end{align*} 
Using this estimate in \eqref{sigma goes to zero} gives
\begin{align*}
 & \int_{\tau_1}^{\tau_2} \int_\Omega mu^{m-1} |H'_\lambda(u)| |\nabla u|^2 \zeta^2 \dx x \dx t + \iota\int_\Omega h_\lambda(u(x,\tau_1)) \zeta(x,\tau_1)^2 \dx x \\
&- \iota \int_\Omega h_\lambda (u(x,{\tau_2})) \zeta(x,{\tau_2})^2 \dx x\\
&\le C \int_{\tau_1}^{\tau_2} \int_\Omega \left(m\frac {H_\lambda(u)^2}{|H'_\lambda(u)|} u^{m-1} |\nabla \zeta|^2 + |h_\lambda |\zeta|\zeta_t| \right) \dx x \dx t.  \\
\end{align*}
Finally, we will show, that we get the correct Caccioppoli estimate as $\lambda\rightarrow 0$. We note that $H_\lambda(s), H'_\lambda(s)$ and $h_\lambda(s)$ are decreasing with respect to $\lambda$. Moreover, if $\eps<1$ 
\begin{align*}
  \lim_{\lambda \rightarrow 0} h_\lambda(u) =   \lim_{\lambda \rightarrow 0} \int_0^u H_\lambda(s) \dx s = \int_0^u   \lim_{\lambda \rightarrow 0} H_\lambda(s) \dx s = \frac{1}{1-\eps} u^{1-\eps}
\end{align*}
and if $\eps>1$
\begin{align*}
\lim_{\lambda \rightarrow 0} h_\lambda(u) =   \lim_{\lambda \rightarrow 0} \int_u^\infty H_\lambda(s) \dx s = \int_u^\infty   \lim_{\lambda \rightarrow 0} H_\lambda(s) \dx s = \frac{-1}{1-\eps} u^{1-\eps}.
\end{align*}
Here we used the monotone convergence theorem. We conclude $h_\lambda(u)\rightarrow \frac 1 {|1-\eps|}u^{1-\eps}$ as $\lambda \rightarrow 0$. Again, by the monotone convergence theorem, we conclude
\begin{align*}
&\int_{\tau_1}^{\tau_2} \int_\Omega mu^{m-1} |H'_\lambda(u)| |\nabla u|^2 \zeta^2 \dx x \dx t \rightarrow \eps \int_{\tau_1}^{\tau_2} \int_\Omega mu^{m-2-\eps} |\nabla u|^2 \zeta^2 \dx x \dx t, \\
&\int_{\tau_1}^{\tau_2} \int_\Omega |h_\lambda(u)| \zeta|\zeta_t| \dx x \dx t \rightarrow \frac{1}{|1-\eps|}\int_{\tau_1}^{\tau_2} \int_\Omega  u^{1-\eps}\zeta|\zeta_t| \dx x \dx t \quad \text{and}\\
&\iota \int_\Omega h_\lambda(u(x,{\tau_i}))\zeta(x,{\tau_i})^2 \dx x \rightarrow \frac \iota {|1-\eps|} \int_\Omega u(x,{\tau_i})^{1-\eps}\zeta(x,{\tau_i})^2 \dx x, \quad i\in \{1,2\}.\\
\end{align*}
In order to control the term involving $\frac {H_\lambda(u)^2}{|H'_\lambda(u)|}$, we observe
\begin{align*}
  \frac {H_\lambda(u)^2}{|H'_\lambda(u)|}=
  \begin{cases}
    \left( \frac {\lambda^{\frac{1-\eps}{2}}}{\sqrt{\eps}} + \sqrt{\eps}\lambda^{\frac{-1-\eps}{2}}(\lambda-u)\right)^2, \quad \text{if } 0\le u \le\lambda\\
\frac{u^{1-\eps}}{\eps}, \quad \text{if } u > \lambda.
  \end{cases}
\end{align*}
Therefore, if $\eps<1$, 
\[
\frac {H_\lambda(u)^2}{|H'_\lambda(u)|} \le \frac {H_1(u)^2}{|H'_1(u)|}
\]
and if $\eps>1$, 
\[
\frac {H_\lambda(u)^2}{|H'_\lambda(u)|} \le \frac{u^{1-\eps}}{\eps}.
\]
We may assume
\begin{equation} \label{finiteness assumption}
\int_{\tau_1}^{\tau_2} \int_\Omega u^{m-\eps} |\nabla \zeta|^2 \dx x \dx t < \infty,
\end{equation}
and therefore use the dominated convergence theorem to conclude
\[
\int_{\tau_1}^{\tau_2} \int_\Omega m\frac {H_\lambda(u)^2}{|H'_\lambda(u)|} u^{m-1} |\nabla \zeta|^2 \dx x \dx t \rightarrow \frac 1 \eps\int_{\tau_1}^{\tau_2} \int_\Omega m u^{m-\eps} |\nabla \zeta|^2 \dx x \dx t.
\]
We note, that if the integral in \eqref{finiteness assumption} is infinite, the estimate holds. Collecting the facts gives
\begin{align}\label{almost caccioppoli2}
   & \eps \int_{\tau_1}^{\tau_2} \int_\Omega mu^{m-2-\eps} |\nabla u|^2 \zeta^2 \dx x \dx t + \frac \iota {|1-\eps|}\int_\Omega u(x,\tau_1)^{1-\eps} \zeta(x,\tau_1)^2 \dx x \nonumber\\
&-\frac \iota {|1-\eps|}\int_\Omega u(x,{\tau_2})^{1-\eps} \zeta(x,{\tau_2})^2 \dx x\nonumber\\
&\le \frac {C_1} \eps\int_{\tau_1}^{\tau_2}\int_\Omega  m u^{m-\eps} |\nabla \zeta|^2 \dx x \dx t + \frac {C_2} {|1-\eps|} \int_{\tau_1}^{\tau_2}\int_\Omega  u^{1-\eps}\zeta|\zeta_t| \dx x \dx t. \nonumber \\
\end{align}
Take $\delta>0$. There exists $\tilde\tau \in (0,T)$, such that 
\[
\int_\Omega u(x,\tilde\tau)^{1-\eps} \zeta(x,\tilde\tau)^2 \dx x \ge \esssup_{t\in(0,T)}\int_\Omega u(x,\tilde\tau)^{1-\eps} \zeta(x,\tilde\tau)^2 \dx x - \delta.
\]
If $\eps<1$, we choose $\tau_1=\tilde\tau$ and let $\tau_2\rightarrow T$. On the other hand, if $\eps>1$, we choose $\tau_2=\tilde\tau$ and let $\tau_1\rightarrow 0$. Thus we get the estimate
\begin{align*}
   &\int_{0}^{T} \int_\Omega mu^{m-2-\eps} |\nabla u|^2 \zeta^2 \dx x \dx t +\frac 1 {\eps|1-\eps|} \esssup_{t\in(0,T)} \int_\Omega u^{1-\eps} \zeta^2 \dx x \\
&\le \frac {C_1} {\eps^2}\int_{0}^{T}\int_\Omega  m u^{m-\eps} |\nabla \zeta|^2 \dx x \dx t + \frac{C_2}{\eps|1-\eps|} \int_{0}^{T}\int_\Omega  u^{1-\eps}\zeta|\zeta_t| \dx x \dx t.  \\
\end{align*}
\end{proof}

\begin{remark}
Inequalities \eqref{almost caccioppoli} and \eqref{almost caccioppoli2} hold even if the cut-off function at hand is not compactly supported in the time variable. This will be useful later on. For instance, it allows us to use cut-off functions, which vanish only on the parabolic boundary of the cylinder.  
\end{remark}
We now prove a measure theoretical lemma. The following lemma is a modification of the one in \cite{local clustering}, as we work with balls instead of cubes. A similar statement has been presented in the metric space setting in \cite{KMMP}. However, for our purposes it is essential to keep track of the dependence of $\eps$ on the various constants. Hence we give a detailed proof for this known result.  

\begin{lemma}\label{measure theoretical lemma}
  Let $u\in W^{1,1}(B(x_0,\rho))$. Suppose that
\begin{align*}
&||u||_{W^{1,1}(B(x_0,\rho))}\le h \rho^{n-1} \quad \text{and}\\
&|\{x\in B(x_0,\rho): u(x)>1\}| \ge \tilde \delta |B(x_0,\rho)|
\end{align*}
for some $h>0$ and $\tilde \delta\in(0,1)$. Then for every $\delta, \lambda \in (0,1)$ there exists $\tilde x\in B(x_0,\rho)$ and $\eps$, such that 
\[
|\{x\in B(\tilde x,\eps \rho) : u(x)>\lambda\}| > (1-\delta) |B(\tilde x,\eps\rho)|.
\]
Here $\eps =\frac {C(\lambda, \delta, n)}h \tilde \delta^2$.  

\end{lemma}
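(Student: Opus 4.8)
The plan is to argue by contradiction, combining a covering of the ball by balls of radius $\eps\rho$ with the De Giorgi isoperimetric inequality: for $v\in W^{1,1}(B)$ on a ball $B$ of radius $r$ and reals $k<l$,
\[
(l-k)\,\big|\{v>l\}\cap B\big|\ \le\ \frac{C(n)\,r^{n+1}}{\big|\{v<k\}\cap B\big|}\int_{\{k<v<l\}\cap B}|\nabla v|\dx x,
\]
see \cite{harnack2}. Fix $\delta,\lambda\in(0,1)$. Replacing $u$ by $u/t$ for a suitable $t\in(\lambda,1)$ --- which preserves both hypotheses (with a larger value of $h$) and the sought conclusion at the level $\lambda$ --- we may assume that the value $\eps$, which will turn out to be of the announced form $\eps=C(\lambda,\delta,n)\tilde\delta^2/h$, is as small as we wish in terms of $\tilde\delta$ and $n$; in particular $\eps\le\tilde\delta/(2n)$. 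Now suppose, for contradiction, that the conclusion fails for this $\eps$, so that $|\{u\le\lambda\}\cap B(\tilde x,\eps\rho)|\ge\delta\,|B(\tilde x,\eps\rho)|$ for every $\tilde x\in B(x_0,\rho)$.

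First I would fix the covering. Let $\{y_i\}$ be a maximal $\eps\rho$-separated subset of $B(x_0,(1-\eps)\rho)$; then the balls $B_i:=B(y_i,\eps\rho)$ are contained in $B(x_0,\rho)$, they cover $B(x_0,(1-\eps)\rho)$, they have overlap bounded by some $C_0=C_0(n)$, and there are at most $C_0\eps^{-n}$ of them. Since $\eps\le\tilde\delta/(2n)$ we still have $|\{u>1\}\cap B(x_0,(1-\eps)\rho)|\ge\tfrac{1}{2}\tilde\delta\,|B(x_0,\rho)|$, and a pigeonhole argument --- the balls on which $\{u>1\}$ has density below $\tfrac{\tilde\delta}{4C_0}$ cannot carry half of the mass of $\{u>1\}$ --- isolates a subfamily
\[
\mathcal F=\Big\{\,i:\ |\{u>1\}\cap B_i|\ge\tfrac{\tilde\delta}{4C_0}\,|B_i|\,\Big\},\qquad \#\mathcal F\ \ge\ c(n)\,\tilde\delta\,\eps^{-n}.
\]

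Next, for each $i\in\mathcal F$ I apply the isoperimetric inequality on $B_i$ with upper level $l=1$ and lower level $k\downarrow\lambda$: using $|\{u<k\}\cap B_i|\ge|\{u\le\lambda\}\cap B_i|\ge\delta|B_i|$, the density bound for $\{u>1\}\cap B_i$, and $\int_{\{k<u<1\}\cap B_i}|\nabla u|\dx x\le\int_{B_i}|\nabla u|\dx x$, this yields
\[
(1-\lambda)\,\tfrac{\tilde\delta}{4C_0}\,|B_i|\ \le\ \frac{C(n)\,(\eps\rho)^{n+1}}{\delta\,|B_i|}\int_{B_i}|\nabla u|\dx x,\qquad\text{hence}\qquad \int_{B_i}|\nabla u|\dx x\ \ge\ c(n)\,(1-\lambda)\,\delta\,\tilde\delta\,(\eps\rho)^{n-1}.
\]
Summing over $i\in\mathcal F$ and using the bounded overlap together with $u\in W^{1,1}(B(x_0,\rho))$,
\[
C_0\,h\,\rho^{n-1}\ \ge\ \sum_{i\in\mathcal F}\int_{B_i}|\nabla u|\dx x\ \ge\ c(n)\,(1-\lambda)\,\delta\,\tilde\delta^2\,\eps^{-1}\,\rho^{n-1},
\]
so $\eps\ \ge\ c(n)(1-\lambda)\delta\,\tilde\delta^2/(C_0h)$. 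Choosing $\eps=C(\lambda,\delta,n)\tilde\delta^2/h$ with $C(\lambda,\delta,n):=\tfrac{c(n)(1-\lambda)\delta}{2C_0}$ contradicts this, and the lemma follows.

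The real content is the De Giorgi isoperimetric inequality, which is exactly the device that converts the statement ``$B_i$ meets both $\{u>1\}$ and $\{u\le\lambda\}$ in sets of comparable measure'' into a lower bound for $\int_{B_i}|\nabla u|\dx x$. The point that needs care is to propagate the dependence on $\eps$ through the covering so that the final threshold comes out proportional to $\tilde\delta^2/h$ --- making this explicit is the reason one reproves the otherwise standard lemma --- whereas the thin boundary annulus $B(x_0,\rho)\setminus B(x_0,(1-\eps)\rho)$ and the difference between $\{u<\lambda\}$ and $\{u\le\lambda\}$ are routine, handled respectively by the reduction to small $\eps$ and by letting $k$ decrease to $\lambda$.
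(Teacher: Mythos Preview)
Your argument is essentially the paper's: cover $B(x_0,(1-\eps)\rho)$ by balls of radius $\eps\rho$ (you use a maximal separated net, the paper uses Besicovitch), isolate via pigeonhole a subfamily of at least $c(n)\tilde\delta\eps^{-n}$ balls on which $\{u>1\}$ has density $\gtrsim\tilde\delta$, apply De Giorgi's isoperimetric inequality on each under the contradiction hypothesis to bound $\int_{B_i}|\nabla u|$ from below by $c(n)(1-\lambda)\delta\tilde\delta(\eps\rho)^{n-1}$, and sum using bounded overlap to force $\eps\ge c(\lambda,\delta,n)\tilde\delta^2/h$.

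One step does not work as written: your reduction $u\mapsto u/t$ with $t\in(\lambda,1)$ to make $\eps$ small.  The conclusion for $u/t$ at level $\lambda$ is $|\{u>t\lambda\}\cap B|>(1-\delta)|B|$, which since $t\lambda<\lambda$ is \emph{weaker} than the conclusion for $u$ at level $\lambda$, so the implication runs the wrong way; if instead you keep the level for $u$ fixed and pass to level $\lambda/t$ for $u/t$, the constant $C(\lambda/t,\delta,n)$ changes with $t$ and the final $\eps$ is no longer of the announced form $C(\lambda,\delta,n)\tilde\delta^2/h$.  The paper, for its part, simply imposes $\eps\le\tilde\delta/(2n)$ and $(1-\eps)^n\ge\tfrac12$ during the argument and does not return to verify these for the final choice of $\eps$ either, so this is a shared technical loose end rather than a defect peculiar to your proof; the core mechanism is identical and correct.
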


\begin{proof}
Take $\eps>0$. Define 
\[
\mathcal{F}=\{B(x,\eps \rho) : x\in B(x_0,(1-\eps)\rho)\}.
\]
By Besicovitch's covering lemma, there is a constant $c_n$ depending on the dimension $n$, and collections $\mathcal G_i \subset \mathcal F$, $i=1,\ldots c_n$, such that each $\mathcal G_i$ consists of disjoint balls and 
\[
B(x_0,(1-\eps)\rho) \subset \bigcup_{i=1}^{c_n} \bigcup_{B\in \mathcal G_i} B.
\] 
Moreover, the number of balls in the collection $\mathcal G = \bigcup_{i=1}^{c_n} \mathcal G_i$, denoted by $|\mathcal G|$, is at most $\frac {c_n}{\eps^n}$. We observe, that 
\begin{align}\label{(1-eps)rho}
  |\{x\in B(x_0,(1-\eps)\rho) : u(x)>1 \} &\ge   |\{x\in B(x_0,(1-\eps)\rho) : u(x)>1 \} \nonumber\\
&- |B(x_0,\rho) \setminus B(x_0,(1-\eps)\rho)|\nonumber\\
&\ge (\tilde \delta - (1-(1-\eps)^n))|B(x_0,(1-\eps)\rho)|\nonumber\\
&\ge \frac {\tilde \delta} 2 |B(x_0,(1-\eps)\rho)|,\nonumber \\
\end{align}
whenever $\eps \le \frac {\tilde \delta}{2n}$. Define the subcollections 
\begin{align*}
&  \mathcal G^+ = \left\{ B\in \mathcal G : |\{ x\in B : u(x)>1\} | > \frac {\tilde \delta}{8c_n} |B| \right\} \quad \text{and}\\
 & \mathcal G^- = \left\{ B\in \mathcal G : |\{ x\in B : u(x)>1\} | \le \frac {\tilde \delta}{8c_n} |B| \right\}.
\end{align*}
Since $\mathcal G$ is a covering of $B(x_0,(1-\eps)\rho)$, \eqref{(1-eps)rho} implies
\begin{align*}
&  \sum_{B_i\in \mathcal G^+} |\{x\in B_i : u(x)>1\}| +   \sum_{B_j\in \mathcal G^-} |\{x\in B_j : u(x)>1\}|\\
&\ge |\{x\in B(x_0,(1-\eps)\rho) : u(x)>1 \} \ge \frac{\tilde \delta }{2} |B(x_0,(1-\eps)\rho)| \ge \frac {\tilde \delta }{4 \eps^n} |B|
\end{align*}
for every $B\in \mathcal G$. Here we assumed $(1-\eps)^n\ge \frac 12$. Thus,
\begin{align*}
  \frac {\tilde \delta}{4\eps^n} &\le \sum_{B_i\in \mathcal G^+} \frac{|\{x\in B_i : u(x)>1\}|}{|B_i|} +   \sum_{B_j\in \mathcal G^-} \frac{|\{x\in B_j : u(x)>1\}|}{|B_j|}\\
&\le |\mathcal G^+|+  \frac{\tilde \delta}{8 c_n} (|\mathcal G| - |\mathcal G^+|)\le \left( 1- \frac{\tilde \delta}{8 c_n}\right) |\mathcal G^+| + \frac{\tilde \delta}{8 \eps^n}.
\end{align*}
 We get a lower bound for the number of cubes in $\mathcal G^+$,
 \begin{equation}
   \label{G+ lower bound}
|\mathcal G^+| > \frac{\tilde \delta c_n}{\eps^n(8c_n-\tilde \delta)}.
 \end{equation}
We fix the numbers $\delta, \lambda \in (0,1)$. Suppose, that for every $B\in \mathcal G^+$, we have
\begin{equation}
  \label{counter assumption}
|\{x\in B : u(x)>\lambda\}| \le (1-\delta)|B|.
\end{equation}
Then 
\begin{equation}\label{lambdabound}
 |\{x\in B: u(x)\le \lambda \}| > \delta |B|.
\end{equation}
The De Giorgi type lemma \cite[Lemma II]{degiorgi} implies 
\begin{align*}
 &(1-\lambda) |\{x\in B: u(x)\le \lambda \}||\{x\in B: u(x)> 1 \}| \le C (\eps \rho)^{n+1} \int_{B} |\nabla u| \dx x \\
\end{align*}
Thus, using \eqref{lambdabound} and the fact, that $B\in \mathcal G^+$, gives the estimate
\begin{align*}
  \frac{(1-\lambda)\delta \tilde \delta}{8c_n} |B|^2 \le C (\eps \rho)^{n+1} \int_{B} |\nabla u| \dx x. 
\end{align*}
We sum over $\mathcal G^+$ and use \eqref{G+ lower bound} to get
\begin{align*}
    \frac{c(\lambda, \delta, n) \tilde \delta^2}{8c_n-\tilde \delta} \frac {\rho^{n-1}} \eps \le C c_n \int_{B(x_0,\rho)} |\nabla u| \dx x \le C c_n h \rho^{n-1}. 
\end{align*}
We may choose 
\[
\eps = \frac {C(\lambda, \delta, n)}h \tilde \delta^2,
\]
thus leading to contradiction. Therefore \eqref{counter assumption} does not hold. That is, there exists $B\in \mathcal G^+$, such that 
\[
|\{x\in B: u(x)>\lambda\}| > (1-\delta) |B|.
\]
\end{proof}

\section{Expansion of positivity}
In this chapter, we will show that the supersolutions satisfy the ``expansion of positivity''. That is, if the supersolution is large in a large portion of a ball at some time, then the supersolution will be bounded away from zero in a larger space-time cylinder after some waiting time. This is the statement of Lemma \ref{expansion of positivity}. 

First, we will show, that if the supersolution $u$ is large in a large portion of a ball at some time level, then $u$ remains bounded away from zero in a large portion of the ball up to some time level.
\begin{lemma}\label{time propagation}
  Let $k>0$ and $\gamma\in (0,1)$. Suppose that $u$ is a weak supersolution in $\Omega_{T_0}$, such that
\[
|\{x\in B(x_0,\rho) : u(x,s)>k\}| \ge \gamma |B(x_0,\rho)| 
\]
for some $s\in(0,\rho^2)$. There exists a constant $C=C(m,n)$, such that 
\[
\left| \left\{x\in B(x_0,\rho) : u(x,t)>\frac{\gamma}{8}k\right \}\right| \ge \frac \gamma 8 |B(x_0,\rho)|
\]
for almost every $t\in \left(s, s+\frac{\gamma^2 \rho^2}{C k^{m-1}}\right]$.
\end{lemma}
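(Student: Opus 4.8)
The plan is to run the De Giorgi--DiBenedetto time-propagation argument, based on the energy estimate of Lemma~\ref{caccioppoli} applied with truncation level $k$ and a \emph{purely spatial} cut-off whose support is only slightly larger than $B(x_0,\rho)$. Fix a time level $t$ in the stated interval at which the slice $u(\cdot,t)$ is defined, and let $\sigma\in(0,1)$ be a small parameter to be chosen later in terms of $\gamma$ and $n$. Take $\zeta=\zeta(x)\in C_0^\infty(B(x_0,(1+\sigma)\rho))$ with $0\le\zeta\le1$, $\zeta\equiv1$ on $B(x_0,\rho)$, and $|\nabla\zeta|\le 2/(\sigma\rho)$. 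By the Remark following Lemma~\ref{caccioppoli2}, inequality \eqref{almost caccioppoli} is available for such time-independent cut-offs; applying it with $\tau_1=s$, $\tau_2=t$ (so $\zeta_t\equiv0$), discarding the nonnegative gradient term on the left, and using $\zeta\equiv1$ on $B(x_0,\rho)$ and $\zeta\le1$ elsewhere yields
\[
\int_{B(x_0,\rho)}(u(x,t)-k)_-^2\,dx\ \le\ \int_{B(x_0,(1+\sigma)\rho)}(u(x,s)-k)_-^2\,dx\ +\ C\int_s^t\!\!\int_\Omega u^{m-1}(u-k)_-^2|\nabla\zeta|^2\,dx\,dt.
\]

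Next I would estimate each side. For the first term on the right, $(u-k)_-^2\le k^2$ and $(u-k)_->0$ only on $\{u<k\}$; inside $B(x_0,\rho)$ that set has measure at most $(1-\gamma)|B(x_0,\rho)|$ by hypothesis, and the annulus $B(x_0,(1+\sigma)\rho)\setminus B(x_0,\rho)$ has measure at most $(2^n-1)\sigma\,|B(x_0,\rho)|$, so this term is at most $k^2\big(1-\gamma+(2^n-1)\sigma\big)|B(x_0,\rho)|$. For the second term, since $m>1$ we have $u^{m-1}\le k^{m-1}$ on $\{u<k\}$, together with $(u-k)_-^2\le k^2$, and $\nabla\zeta$ is supported in that thin annulus, so it is at most $C(m,n)\,k^{m+1}\sigma^{-1}\rho^{-2}(t-s)\,|B(x_0,\rho)|$. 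On the other hand, on $\{x\in B(x_0,\rho):u(x,t)\le\tfrac\gamma8 k\}$ one has $(u(x,t)-k)_-\ge k(1-\tfrac\gamma8)$, so the left-hand side is at least $k^2(1-\tfrac\gamma8)^2\,\big|\{x\in B(x_0,\rho):u(x,t)\le\tfrac\gamma8 k\}\big|$.

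Combining, dividing by $k^2(1-\tfrac\gamma8)^2$, and then choosing $\sigma$ so that $(2^n-1)\sigma=\gamma/8$, the ``initial'' contribution becomes $\tfrac{1-7\gamma/8}{(1-\gamma/8)^2}\le 1-\tfrac\gamma4$ (an elementary inequality for $\gamma\in(0,1)$), while the gradient contribution becomes $C(m,n)\,\gamma^{-1}k^{m-1}\rho^{-2}(t-s)$ (the factor $(1-\tfrac\gamma8)^{-2}\le(8/7)^2$ being absorbed into the constant). Hence
\[
\big|\{x\in B(x_0,\rho):u(x,t)\le\tfrac\gamma8 k\}\big|\ \le\ \Big(1-\frac\gamma4\Big)|B(x_0,\rho)|+\frac{C(m,n)\,k^{m-1}(t-s)}{\gamma\rho^2}\,|B(x_0,\rho)|.
\]
Choosing $C$ large enough (depending only on $m,n$) makes the last term at most $\tfrac\gamma8|B(x_0,\rho)|$ whenever $t-s\le\tfrac{\gamma^2\rho^2}{Ck^{m-1}}$, and then $\big|\{x\in B(x_0,\rho):u(x,t)\le\tfrac\gamma8 k\}\big|\le(1-\tfrac\gamma8)|B(x_0,\rho)|$, i.e.\ $\big|\{x\in B(x_0,\rho):u(x,t)>\tfrac\gamma8 k\}\big|\ge\tfrac\gamma8|B(x_0,\rho)|$, for every such admissible $t$ and hence for a.e.\ $t$ in the stated interval (intersected with $(0,T_0)$, so that \eqref{almost caccioppoli} applies on $(s,t)$).

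The main obstacle is the choice of the cut-off radius. Since the hypothesis and the conclusion both concern the same ball $B(x_0,\rho)$, a cut-off supported in a fixed larger ball such as $B(x_0,2\rho)$ would feed an uncontrollable factor $\sim 2^n$ from the surrounding annulus into the initial term and destroy the estimate; shrinking the support to $B(x_0,(1+\sigma)\rho)$ with $\sigma\sim\gamma$ reduces that error to $O(\gamma)$ at the price of a factor $\sigma^{-1}\sim\gamma^{-1}$ in the gradient term, and it is precisely this trade-off that produces the $\gamma^2$ in the length of the time interval. The only other care needed is bookkeeping: the elementary inequality forcing the leading coefficient strictly below $1-\gamma/8$, and the use of $m>1$ to bound $u^{m-1}$ by $k^{m-1}$ on the sub-$k$ level set.
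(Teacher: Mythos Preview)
Your proof is correct and follows essentially the same approach as the paper: a purely spatial cut-off supported in $B(x_0,(1+\sigma)\rho)$ with $\sigma$ proportional to $\gamma$, the energy inequality \eqref{almost caccioppoli} with $\tau_1=s$, the split of the initial term into ball plus thin annulus, the use of the annulus support for $\nabla\zeta$ in the gradient term, and Chebyshev for the lower bound. The only differences are cosmetic---the paper takes $\sigma=\gamma/(5n)$ where you take $\sigma=\gamma/(8(2^n-1))$, and the final arithmetic is organized slightly differently---but the mechanism and constants line up.
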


\begin{proof}
  Denote $T=\frac{\gamma^2 \rho^2}{C k^{m-1}}$ and $\eps = \frac \gamma {5n}$. Let $\zeta\in C_0^\infty (B(x_0,(1+\eps)\rho))$ be a cuf-off function, such that 
\[
\begin{cases}
  \zeta=1 \quad \text{in } B(x_0, \rho),\\
0\le \zeta \le 1 \quad \text{and} \\
|\nabla \zeta| \le \frac{C_1}{\eps  \rho}.
\end{cases}
\]
Take $\tau_0,\tau\in (s,s+T)$, such that $\tau_0<\tau$. Since $u$ is a non-negative weak supersolution, \eqref{almost caccioppoli} gives
\begin{align*}
    &\int_{\tau_0}^{\tau} \int_{B(x_0,(1+\eps)\rho)}  u^{m-1} | \nabla(u-k)_-|^2\zeta^2 \dx x \dx t \\
&+ \int_{B(x_0,(1+\eps)\rho)}  \zeta(x)^2 (u(x,\tau)-k)_-^2  \dx x\\
&\le C\int_{\tau_0}^\tau \int_{B(x_0,(1+\eps)\rho)} u^{m-1}(u-k)_-^2 |\nabla \zeta |^2 \dx x \dx t \\
&+ \int_{B(x_0,(1+\eps)\rho)}  \zeta(x)^2 (u(x,\tau_0)-k)_-^2  \dx x. 
\end{align*}
This implies 
\begin{align*}
   &\int_{B(x_0,(1+\eps)\rho)} \zeta(x)^2 (u(x,\tau)-k)_-^2  \dx x\\
&\le C\int_{\tau_0}^\tau \int_{B(x_0,(1+\eps)\rho)} u^{m-1}(u-k)_-^2 |\nabla \zeta |^2 \dx x \dx t \\
&+ \int_{B(x_0,(1+\eps)\rho)}  \zeta(x)^2 (u(x,\tau_0)-k)_-^2  \dx x. 
\end{align*}
Thus letting $\tau_0\rightarrow s$ gives
\begin{align}\label{level set estimate}
   &\int_{B(x_0,\rho)}  \zeta(x)^2 (u(x,\tau)-k)_-^2  \dx x \nonumber\\
&\le C\int_{s}^{s+T} \int_{B(x_0,(1+\eps)\rho)} u^{m-1}(u-k)_-^2 |\nabla \zeta |^2 \dx x \dx t \nonumber\\
&+ \int_{B(x_0,(1+\eps)\rho)}  \zeta(x)^2 (u(x,s)-k)_-^2  \dx x. \nonumber\\
\end{align}
This holds for almost every $\tau\in(s,s+T)$. We write
\begin{align}\label{1eps split}
&\int_{B(x_0,(1+\eps)\rho)}  \zeta(x)^2 (u(x,s)-k)_-^2  \dx x \nonumber\\
&= \int_{B(x_0,(1+\eps)\rho)\setminus B(x_0,\rho)}  \zeta(x)^2 (u(x,s)-k)_-^2  \dx x \nonumber\\
&+\int_{B(x_0,\rho)}  \zeta(x)^2 (u(x,s)-k)_-^2  \dx x.\nonumber\\
\end{align}
The first term on the right hand is bounded from above by
\begin{align*}
  &\int_{B(x_0,(1+\eps)\rho)\setminus B(x_0,\rho)}  \zeta(x)^2 (u(x,s)-k)_-^2  \dx x \\
&\le k^2 |B(x_0,(1+\eps)\rho)\setminus B(x_0,\rho)| =
k^2((1+\eps)^n-1)|B(x_0,\rho)|.  
\end{align*}
By the assumption on $u$, the second term on the right hand side of \eqref{1eps split} is bounded from above by 
\begin{align*}
  \int_{B(x_0,\rho)}  \zeta(x)^2 (u(x,s)-k)_-^2  \dx x&\le k^2|\{x\in B(x_0,\rho) : u(x,s)\le k\}| \\
&\le (1-\gamma)k^2 |B(x_0,\rho)|.
\end{align*}
Thus,
\begin{align*}
  &\int_{B(x_0,(1+\eps)\rho)}  \zeta(x)^2 (u(x,s)-k)_-^2  \dx x \\
&\le (((1+\eps)^n-1)+(1-\gamma))k^2 |B(x_0,\rho)| \le \left(1-\frac {3\gamma} 4\right )k^2|B(x_0,\rho)|.
\end{align*}
Here we used the fact 
\[
(1+\eps)^n-1 \le \frac {n\eps}{1-n\eps} \le \frac \gamma 4.
\]
The first term on the right hand side of \eqref{level set estimate} can be estimated using the boundedness of $|\nabla \zeta|$ and the fact that $\supp \nabla \zeta \subset B(x_0,(1+\eps)\rho)\setminus B(x_0,\rho)$. We have
\begin{align*}
 & \int_{s}^{s+T} \int_{B(x_0,(1+\eps)\rho)} u^{m-1}(u-k)_-^2 |\nabla \zeta |^2 \dx x \dx t \\
&\le C_2 k^{m+1} \left (\frac {C_1}{\eps \rho}\right)^2 T |B(x_0,(1+\eps)\rho)\setminus B(x_0,\rho)|\\
&\le C_2 k^{m+1} \left (\frac {C_1}{\eps \rho}\right)^2 T \frac \gamma 4 |B(x_0,\rho)| =  \frac{C_3 n^2 k^{m+1}}{\rho^2\gamma}T |B(x_0,\rho)|\\
&=\frac{C_3 n^2 k^2 \gamma} C |B(x_0,\rho)| \le \frac {\gamma k^2} 4 |B(x_0, \rho)|. 
\end{align*}
Here $C$ is chosen in such a way, that $C\ge 4C_3n^2$. To get a lower bound for the left hand side of \eqref{level set estimate}, we observe that 
\[
\left\{x\in B(x_0,\rho): u(x,t)\le \frac \gamma 8 k \right\} = \left\{x\in B(x_0,\rho): (u(x,t)-k)_-\ge \left(1-\frac \gamma 8\right) k \right\}
\]
and use Chebyshev's inequality to obtain
\begin{align*}
&\left(1-\frac \gamma 8\right)^2 k^2\left |\left\{x\in B(x_0,\rho): u(x,t)\le \frac \gamma 8 k \right\}\right| \\
&\le \int_{B(x_0,\rho)}  \zeta(x)^2 (u(x,\tau)-k)_-^2  \dx x.
\end{align*}
Thus, collecting the facts gives
\[
  \left(1-\frac \gamma 8\right)^2 \left|\left\{x\in B(x_0,\rho): u(x,t)\le \frac \gamma 8 k \right\}\right|\le \left( 1- \frac \gamma 2 \right) |B(x_0,\rho)|.
\]
Approximating
\[
\frac{ \left( 1-\frac \gamma 2 \right ) }{\left( 1 - \frac \gamma 8 \right )^2} \le \frac {1- \frac \gamma 2}{1-\frac \gamma 4} \le 1- \frac \gamma 8
\]
then gives
\[
  |\{x\in B(x_0,\rho): u(x,t)> \frac \gamma 8 k \}|\ge \frac \gamma 8 |B(x_0,\rho)|,
\]
thus concluding the proof.
\end{proof}

We will make a change of variables, in order to extend the positivity set even further in time. We consider the function $w(x,\tau)=g(\Lambda^{-1}(\tau))u(x,\Lambda^{-1}(\tau))$, which is a supersolution in a space-time cylinder $\Omega_{T_0'}$. The key idea here is to compensate the decay of $u$ by a factor $g(\Lambda^{-1}(\tau))$, thus allowing us to consider the times beyond the threshold given by Lemma \ref{time propagation}.

\begin{lemma}[Change of Variables]\label{change of variables}
Let $u$ be a non-negative weak supersolution in $\Omega_{T_0}$, such that
\[
|\{x\in B(x_0,\rho): u(x,s)> k\} | \ge \gamma |B(x_0,\rho)|. 
\]
Then
\[
w(x,\tau)=\frac{e^{\frac \tau {m-1}}} k (\delta \rho^2)^{\frac 1 {m-1}} u \left(x,s+\frac {e^\tau}{k^{m-1}} \delta \rho^2 \right)
\]  
is a non-negative supersolution in $\Omega_{T_0'}$, where $T_0'=\ln\left((T_0-s)\frac{k^{m-1}}{\delta \rho^2}\right)$. Moreover, $w$ satisfies
\[
|\{x\in B(x_0,\rho): w(x,\tau)> k_0 \} | \ge \frac \gamma 8 |B(x_0,\rho)|
\]
for almost every $\tau\ge 0$. Here $\delta=\frac{\gamma^2}C$ and $k_0=\frac \gamma 8 (\delta \rho^2)^{\frac 1 {m-1}}$. 
\end{lemma}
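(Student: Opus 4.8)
\emph{Plan.} I would prove the two assertions separately: that $w$ is a non-negative weak supersolution on $\Omega_{T_0'}$, and then the measure lower bound for the superlevel set $\{w(\cdot,\tau)>k_0\}$.

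For the first assertion I would write $w(x,\tau)=g(\tau)\,u(x,\phi(\tau))$ with $g(\tau)=k^{-1}(\delta\rho^2)^{1/(m-1)}e^{\tau/(m-1)}$ and $\phi(\tau)=s+k^{-(m-1)}\delta\rho^2\,e^{\tau}$, so that $\phi$ maps $(0,T_0')$ smoothly and increasingly onto $(\phi(0),T_0)$ with $\phi(0)=s+\delta\rho^2 k^{-(m-1)}>s$, $\phi(T_0')=T_0$, and $\phi'(\tau)=g(\tau)^{m-1}$, $g'(\tau)=g(\tau)/(m-1)$, $g(\phi^{-1}(t))=(t-s)^{1/(m-1)}$. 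These relations make the formal identity
\[
w_\tau-\Delta w^m=g'(\tau)\,u(\cdot,\phi(\tau))+g(\tau)\phi'(\tau)\,\big(u_t-\Delta u^m\big)(\cdot,\phi(\tau))\ \ge\ g'(\tau)\,u(\cdot,\phi(\tau))\ \ge\ 0
\]
hold, and they dictate the choice of $g$ and $\phi$. To make this rigorous I would argue in the weak formulation: given a non-negative $\psi\in C_0^\infty(\Omega_{T_0'})$, the function
\[
\eta(x,t):=(t-s)^{1/(m-1)}\,\psi\big(x,\phi^{-1}(t)\big)
\]
is an admissible non-negative test function in $C_0^\infty(\Omega_{T_0})$ — its support in time is a compact subset of $(\phi(0),T_0)\subset(0,T_0)$, where $(t-s)^{1/(m-1)}$ and $\phi^{-1}$ are smooth. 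Inserting $\eta$ into the weak inequality for $u$ and changing variables $t=\phi(\tau)$ in the integrals defining the weak inequality for $w$ (using $\nabla w^m=g^m\,\nabla u^m\circ\phi$, $dt=g^{m-1}\,d\tau$ and $\psi_\tau=g^{m-2}\,\eta_t\circ\phi-(g'/g^{2})\,\eta\circ\phi$), the two ``time derivative'' contributions match and one is left with
\[
\iint_{\Omega_{T_0'}}\!\big({-}w\psi_\tau+\nabla w^m\!\cdot\!\nabla\psi\big)\,dx\,d\tau=\iint_{\Omega_{T_0}}\!\big({-}u\eta_t+\nabla u^m\!\cdot\!\nabla\eta\big)\,dx\,dt+\frac{1}{m-1}\iint_{\Omega_{T_0}}\!\frac{u\,\eta}{t-s}\,dx\,dt\ \ge\ 0 ,
\]
because the first integral is non-negative by the weak inequality for $u$ and the second is non-negative since $m>1$, $u\ge0$, $\eta\ge0$ and $t>s$ on $\supp\eta$. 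The memberships $w,w^m\in L^2_{loc}(0,T_0';H^1_{loc}(\Omega))$ are inherited from those of $u,u^m$ because on compact time-subintervals $\phi$ is a diffeomorphism with $\phi',g$ bounded away from $0$ and $\infty$; and $w\ge0$ since $g>0$, $u\ge0$.

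For the measure estimate, the relation $g(\phi^{-1}(t))=(t-s)^{1/(m-1)}$ together with $k_0=\tfrac{\gamma}{8}(\delta\rho^2)^{1/(m-1)}$ gives, for every $\tau\ge0$,
\[
\{x\in B(x_0,\rho):w(x,\tau)>k_0\}=\Big\{x\in B(x_0,\rho):u(x,\phi(\tau))>\tfrac{\gamma}{8}\big(\tfrac{\delta\rho^2}{\phi(\tau)-s}\big)^{1/(m-1)}\Big\} .
\]
The right-hand side is exactly the set produced by Lemma~\ref{time propagation} applied to $u$ \emph{at the time $s$ with the target-adapted level} $k'(t):=\big(\delta\rho^2/(t-s)\big)^{1/(m-1)}$ in place of $k$; this is admissible for $t=\phi(\tau)\ge\phi(0)$, i.e.\ for all $\tau\ge0$, since then $k'(t)\le k$ and hence $|\{u(\cdot,s)>k'(t)\}|\ge|\{u(\cdot,s)>k\}|\ge\gamma|B(x_0,\rho)|$. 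Because $\delta=\gamma^2/C$, the time interval in the conclusion of Lemma~\ref{time propagation} is then $\big(s,\,s+\tfrac{\gamma^2\rho^2}{C\,k'(t)^{m-1}}\big]=(s,t]$ and the level is $\tfrac{\gamma}{8}k'(t)=\tfrac{\gamma}{8}\big(\delta\rho^2/(t-s)\big)^{1/(m-1)}$, so $\big|\{x:u(x,t)>\tfrac{\gamma}{8}(\delta\rho^2/(t-s))^{1/(m-1)}\}\big|\ge\tfrac{\gamma}{8}|B(x_0,\rho)|$ for a.e.\ such $t$; stitching these ``almost every'' statements over a countable family of target times yields the claimed bound for a.e.\ $\tau\ge0$.

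The substance of the proof is in the first part — checking that $\eta$ is a genuine test function for $u$ (the smoothness and compact support, for which the explicit form of $\phi$ and the fact that $\supp\eta$ stays away from $\{t=s\}$ are essential), justifying the time change of variables with only $L^2_{loc}(H^1_{loc})$ regularity available, and carrying out the cancellation so that only the sign-definite remainder survives. The measure estimate is comparatively routine once one sees that Lemma~\ref{time propagation} should be applied at the original time $s$ with a target-dependent level (iterating it scale by scale would degrade the measure density), the only delicate point being the bookkeeping of the ``almost every $t$'' qualifiers.
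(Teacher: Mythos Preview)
Your proposal is correct and follows essentially the same route as the paper. For the supersolution part you test $u$ with $\eta(x,t)=(t-s)^{1/(m-1)}\psi(x,\phi^{-1}(t))$ and change variables; the paper does the same thing in the notation $g(t)=(t-s)^{1/(m-1)}$, $\Lambda=\phi^{-1}$, $\eta(x,t)=g(t)\varphi(x,\Lambda(t))$, and likewise discards the sign-definite term $\frac{1}{m-1}\iint w\varphi$. For the measure estimate you apply Lemma~\ref{time propagation} at the original time $s$ with the $t$-dependent level $k'(t)=(\delta\rho^2/(t-s))^{1/(m-1)}\le k$; the paper parametrises this as $\sigma k$ with $\sigma=e^{-\tau/(m-1)}$ and reads off the conclusion at the endpoint of the resulting interval, which is exactly your $t=\phi(\tau)$. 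Your remark about stitching the ``almost every'' statements over a countable family is, if anything, more careful than the paper's treatment of the same point.
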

\begin{proof}
  First we will show that $w$ is indeed a weak supersolution. Define 
\[
g(t) = (t-s)^{\frac 1 {m-1}} \quad \text{and} \quad \Lambda(t)=\ln \left( (t-s) \frac {k^{m-1}}{\delta \rho^2}\right).
\]
Then $w$ can be written as $w(x,\tau)=g(\Lambda^{-1}(\tau))u(x,\Lambda^{-1}(\tau))$. Let $\ph\in C_0^\infty(\Omega_{T_0'})$ be a non-negative test function and define
\begin{align*}
 & \eta(x,t)=g(t)\ph(x,\Lambda(t))= g(t)\tilde{\ph}(x,t) \quad \text{and}\\
&\tilde{w}(x,t)=w(x,\Lambda(t))= g(t)u(x,t).
\end{align*}
Now $\nabla u^m\cdot \nabla \eta$ and $u\eta_t$ can be written in terms of $\tilde{w}, \tilde{\ph}$ and $\Lambda'(t)$ as 
\begin{align*}
&  \nabla u^m \cdot \nabla \eta= \frac{1}{g^m} \nabla \tilde{w}^m \cdot \nabla \eta = \frac 1 {t-s} \nabla \tilde{w}^m\cdot \nabla \tilde{\ph} = \Lambda'(t)  \nabla \tilde{w}^m\cdot \nabla \tilde{\ph} \quad \text{and}\\
&u \eta_t = g'(t)u \tilde{\ph} + \tilde{w}  \tilde {\ph}_t = \frac {g'}g \tilde{w} \tilde {\ph} + \tilde{w} \tilde {\ph}_t = \frac{\Lambda'(t)}{m-1}\tilde{w} \tilde {\ph} + \tilde{w}\tilde {\ph}_t.
\end{align*}
Denote $T_1=s+\frac{\delta \rho^2}{k^{m-1}}$. Since $u$ is a weak supersolution, it satisfies
\begin{align*}
&0\le \int_{T_1}^{T_0} \int_\Omega \Big(-u \eta_t + \nabla u^m \cdot \nabla \eta \Big)\dx x \dx t\\
&=-\int_{T_1}^{T_0} \int_\Omega \left(\frac{\Lambda'(t)}{m-1}\tilde{w} \tilde {\ph} + \tilde{w}\tilde {\ph}_t \right)\dx x \dx t + \int_{T_1}^{T_0} \int_\Omega \Lambda'(t)  \nabla \tilde{w}^m\cdot \nabla \tilde{\ph} \dx x \dx t.
\end{align*}
Recalling the definition of $\tilde w$ and $\tilde \ph$ gives
\begin{align*}
0&\le -\int_{T_1}^{T_0} \int_\Omega \left (\frac{\Lambda'(t)}{m-1}w(x,\Lambda(t)) \ph(x,\Lambda(t)) + w(x,\Lambda(t))\ph(x,\Lambda(t))_t \right) \dx x \dx t \\
&+ \int_{T_1}^{T_0} \int_\Omega \Lambda'(t)  \nabla w(x,\Lambda(t))^m\cdot \nabla \ph(x,\Lambda(t)) \dx x \dx t\\
&=-\int_{0}^{T_0'} \int_\Omega \left(\frac 1{m-1}w \ph + w\ph_\tau \right)\dx x \dx \tau + \int_{0}^{T_0'} \int_\Omega  \nabla w^m\cdot \nabla \ph \dx x \dx \tau\\
&\le \int_{0}^{T_0'} \int_\Omega \Big(- w\ph_\tau + \nabla w^m\cdot \nabla \ph \Big)\dx x \dx \tau,
\end{align*}
thus showing that $w$ is a weak supersolution. The next step is to show, that $w$ satisfies the inequality
\[
|\{x\in B(x_0,\rho): w(x,\tau)> k_0 \} | \ge \frac \gamma 8 |B(x_0,\rho)|.
\]
Take $\sigma\le 1$. By assumption
\[
|\{x\in B(x_0,\rho): u(x,s)>\sigma k\} | \ge|\{x\in B(x_0,\rho): u(x,s)> k\} | \ge \gamma |B(x_0,\rho)|. 
\]
By Lemma \ref{time propagation} we have
\[
\left|\left\{x\in B(x_0,\rho): u(x,t)> \frac \gamma 8 \sigma k\right\} \right| \ge \frac \gamma 8 |B(x_0,\rho)|
\]
for almost every $t\in \left (s, s+ \frac {\delta \rho^2}{(\sigma k)^{m-1}}  \right ]$, where $\delta = \frac {\gamma^2}{C}$. Thus, in particular we have 
\[
\left |\left \{x\in B(x_0,\rho): u\left (x, s+ \frac {\delta \rho^2}{(\sigma k)^{m-1}}\right)> \frac \gamma 8 \sigma k \right\} \right| \ge \frac \gamma 8 |B(x_0,\rho)|.
\]
Choosing $\sigma=\sigma(\tau)=e^{-\frac \tau {m-1}}$ gives
\begin{align*}
&\left |\left \{x\in B(x_0,\rho): \frac{e^{\frac {\tau}{m-1}}}{k} (\delta \rho^2)^{\frac 1 {m-1}} u\left (x, s+ \frac {\delta \rho^2}{k^{m-1}} e^\tau \right)> \frac \gamma 8 (\delta \rho^2)^{\frac 1 {m-1}} \right\} \right| \\
&\ge \frac \gamma 8 |B(x_0,\rho)|
\end{align*}
for almost every $\tau\ge 0$. Now, denoting $k_0=\frac \gamma 8 (\delta \rho^2)^{\frac 1 {m-1}}$ and recalling the definition of $w$, shows that 
\[
|\{x\in B(x_0,\rho): w(x,\tau)> k_0 \} | \ge \frac \gamma 8 |B(x_0,\rho)|,
\] 
concluding the proof.
\end{proof}

The next lemma shows, that $w$ is small in a small portion of a space-time cylinder $B(x_0,4\rho)\times (T,\theta T)$. This, however, realizes after a waiting time $T$, depending on $\gamma$ and the size of the portion, where $w$ is small. Then, the expansion of positivity for $w$ follows from a De Giorgi type lemma for the weak supersolutions.

\begin{lemma}\label{nu lemma}
  Let $u$ be a weak supersolution in $\Omega_{T_0}$ and let $w$ be defined as in Lemma \ref{change of variables}. Then for every $\nu >0$ there exists $\eps>0$ and a time level $T$, such that 
\[
|\{ (x,t)\in B(x_0,4\rho)\times (T,\theta T): w< \eps k_0 \} | \le \nu |B(x_0,4\rho)\times (T,\theta T)|. 
\]
The dependence of $\eps$ and $T$ on the parameters $\nu$ and $\gamma$ can be traced as
\[
\eps=2^{-N} \quad \text {and} \quad T=\frac{2}{(\eps k_0)^{m-1}} (4\rho)^2, 
\]
where $ N=\left(\frac C {\gamma\nu} \right)^2+1$.
 The parameter $\theta \ge 2$ can be chosen as we please. 
\end{lemma}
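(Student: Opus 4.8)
The plan is a De Giorgi iteration over the dyadic levels $k_j=2^{-j}k_0$, $j=0,1,\dots,N$, combining the energy estimate for the truncations $(w-k_j)_-$ with the isoperimetric inequality of \cite[Lemma II]{degiorgi} and the measure information carried by $w$ from Lemma~\ref{change of variables}. Since $k_j\le k_0$ and $B(x_0,\rho)\subset B(x_0,4\rho)$, that information gives $|\{x\in B(x_0,4\rho):w(x,\tau)>k_j\}|\ge\frac{\gamma}{8\cdot4^n}|B(x_0,4\rho)|$ for a.e.\ $\tau\ge0$. Set $Q=B(x_0,4\rho)\times(T,\theta T)$ with $T=2(4\rho)^2/(\eps k_0)^{m-1}$, and write $A_j=\{(x,t)\in Q:w<k_j\}$, $D_j=\{(x,t)\in Q:k_{j+1}<w<k_j\}$; note $A_N\subset A_{j+1}$ for $j\le N-1$ and that $D_0,\dots,D_{N-1}$ are pairwise disjoint subsets of $Q$.

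First I would feed $w(\cdot,t)$ into the De Giorgi lemma on $B(x_0,4\rho)$ with the levels $k_{j+1}<k_j$. Using the measure lower bound and $k_j-k_{j+1}=k_{j+1}$, this yields for a.e.\ $t$
\[
k_{j+1}\,|\{x\in B(x_0,4\rho):w(x,t)<k_{j+1}\}|\le\frac{C\rho}{\gamma}\int_{\{k_{j+1}<w(\cdot,t)<k_j\}\cap B(x_0,4\rho)}|\nabla w(x,t)|\dx x .
\]
Integrating in $t$ over $(T,\theta T)$ and applying Cauchy--Schwarz gives $k_{j+1}|A_{j+1}|\le\frac{C\rho}{\gamma}\,|D_j|^{1/2}\bigl(\iint_{D_j}|\nabla w|^2\dx x\dx t\bigr)^{1/2}$. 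On $D_j$ one has $w>k_{j+1}$, hence $|\nabla w|^2\le k_{j+1}^{1-m}\,w^{m-1}|\nabla w|^2=k_{j+1}^{1-m}\,w^{m-1}|\nabla(w-k_j)_-|^2$, which is exactly the density controlled by the Caccioppoli estimate.

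Next I would run that estimate: apply \eqref{almost caccioppoli} to $w$ at level $k_j$ with a cut-off $\zeta$ that equals $1$ on $Q$, is supported in $B(x_0,8\rho)\times(T/2,\theta T)$, satisfies $|\nabla\zeta|\le C/\rho$, $|\zeta_t|\le C/T$, and vanishes only on the parabolic boundary — taking $\tau_1=T/2$, where $\zeta$ vanishes, and letting $\tau_2\uparrow\theta T$; this is legitimate although $\zeta$ does not vanish at $\theta T$, by the Remark following Lemma~\ref{caccioppoli2}. Using $(w-k_j)_-\le k_j$ and $w^{m-1}\le k_j^{m-1}$ on $\{w<k_j\}$, and the intrinsically scaled $T$ — which forces $k_j^2/T\le k_j^{m+1}/(32\rho^2)$ for all $j\le N$ since $(\eps k_0)^{m-1}\le k_j^{m-1}$ when $m>1$ — one gets
\[
\iint_Q w^{m-1}|\nabla(w-k_j)_-|^2\dx x\dx t\le C\,k_j^{m+1}\,\rho^{n}\,\frac{\theta}{(\eps k_0)^{m-1}} .
\]
Plugging this into the previous display, all powers of $k_j$ cancel against the $k_{j+1}$ on the left of the De Giorgi inequality (here $k_j^{m+1}/k_{j+1}^{m-1}\simeq k_{j+1}^2$), and comparing with $|Q|\simeq\rho^{n+2}\theta/(\eps k_0)^{m-1}$ — where $\theta\ge2$ is used to bound $\theta T-T$ and $\theta T-T/2$ below and above by multiples of $\theta T$ — one arrives at the core inequality
\[
|A_{j+1}|^2\le\frac{C}{\gamma^2}\,|Q|\,|D_j|,\qquad j=0,\dots,N-1 .
\]

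Finally, since $A_N\subset A_{j+1}$ for each such $j$ and $\sum_{j=0}^{N-1}|D_j|\le|Q|$, summing over $j$ gives $N|A_N|^2\le\frac{C}{\gamma^2}|Q|^2$, i.e.\ $|A_N|\le\frac{C}{\gamma\sqrt N}|Q|$. Choosing $N=(C/\gamma\nu)^2+1$ makes the right-hand side at most $\nu|Q|$; since $A_N=\{(x,t)\in Q:w<2^{-N}k_0\}=\{(x,t)\in Q:w<\eps k_0\}$ with $\eps=2^{-N}$, this is the assertion. I expect the main obstacle to be precisely the passage between $|\nabla w|$ on $D_j$ and the degenerate energy density $w^{m-1}|\nabla(w-k_j)_-|^2$: unlike in the $p$-Laplace case the Caccioppoli inequality controls only this weighted quantity, so the extra powers of $k_j$ it produces must be absorbed, and it is exactly the intrinsic waiting time $T=2(4\rho)^2/(\eps k_0)^{m-1}$ that makes the absorption uniform in $j$.
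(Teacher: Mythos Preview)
Your proposal is correct and follows essentially the same route as the paper's proof: dyadic levels $k_j=2^{-j}k_0$, the De Giorgi isoperimetric inequality at each time slice combined with the measure information from Lemma~\ref{change of variables}, Cauchy--Schwarz, the passage from $|\nabla w|^2$ on $D_j$ to $k_{j+1}^{1-m}w^{m-1}|\nabla(w-k_j)_-|^2$, the Caccioppoli estimate with the intrinsic choice of $T$ to balance the $|\zeta_t|$ and $|\nabla\zeta|^2$ contributions, and the final summation over disjoint shells. The only inessential differences are notational (the paper writes your $D_j$ as $A_j$) and in the time support of the cut-off (the paper takes it compactly supported in $(0,T_0')$ rather than vanishing at $T/2$), neither of which affects the argument.
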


\begin{proof}
  Let $k_j=2^{-j}k_0$ for $j=0,1,\ldots, N$ and $\eps = 2^{-N}$, where $N\in \n$ will be determined in terms of $\gamma$ and $\nu$. By the De Giorgi type lemma \cite[Lemma II]{degiorgi} the following holds 
\begin{align*}
&(k_j-k_{j+1})|\{x\in B(x_0,4\rho): w(x,t)<k_{j+1}\}| \\
&\le \tilde C \frac {\rho^{n+1}}{|\{x\in B(x_0,4\rho) : w(x,t)>k_j\}|} \int_{A_j(t)} |\nabla w| \dx x
\end{align*}
at each time level $t$. Here $A_j(t)=\{x\in B(x_0, 4\rho): k_{j+1} < w(x,t) < k_j \}$. By Lemma \ref{change of variables}, we have
\begin{align*}
&|\{x\in B(x_0,4\rho) : w(x,t)>k_j\}| \ge |\{x\in B(x_0,\rho) : w(x,t)>k_0\}|\\
&\ge \frac \gamma 8 |B(x_0,\rho)| = \gamma \tilde C \rho^n.
\end{align*}
Now, since $k_j-k_{j+1}=k_{j+1}$, we get the estimate
\[
|\{x\in B(x_0,4\rho): w(x,t)<k_{j+1}\}| \le \frac {C \rho}{\gamma k_{j+1}} \int_{A_j(t)} |\nabla w| \dx x.
\]
Integrating over the time interval $(T,\theta T)$ gives 
\begin{align}\label{nu levelset}
&|\{(x,t)\in B(x_0,4\rho)\times (T,\theta T): w(x,t)<k_{j+1}\}| \nonumber\\
&\le \frac {C \rho}{\gamma k_{j+1}} \int_T^{ \theta T}\int_{A_j(t)} |\nabla w| \dx x \dx t.\nonumber\\
\end{align}
In order to control the right hand side, we denote $A_j=\{(x,t) \in B(x_0,4\rho)\times (T,\theta T) : k_{j+1}<w<k_j\}$ and use H\"older's inequality to get
\begin{equation}\label{nu holder}
\iint_{A_j} |\nabla w | \dx x \dx t \le \left ( \iint_{A_j} |\nabla w| ^2 \dx x \dx t \right)^{1/2} |A_j|^{1/2}.
\end{equation}
Since $k_{j+1}<w<k_j$ in $A_j$, we may approximate 
\begin{align}\label{nu Aj}
  \iint_{A_j} |\nabla w|^2 \dx x \dx t &=   \iint_{A_j} |\nabla (w-k_j)_-|^2 \dx x \dx t\nonumber\\
&\le \frac 1 {k_{j+1}^{m-1}} \iint_{A_j} w^{m-1}  |\nabla (w-k_j)_-|^2 \dx x \dx t\nonumber\\
&\le \frac 1 {k_{j+1}^{m-1}} \int_T^{\theta T} \int_{B(x_0,4\rho)} w^{m-1}  |\nabla (w-k_j)_-|^2 \dx x \dx t.\nonumber \\
\end{align}
Let $\zeta \in C_0^\infty(B(x_0,8\rho)\times (0,T_0'))$ be a smooth cut-off function, such that $0\le \zeta \le 1$ and 
\[
\begin{cases}
  \zeta = 1 \quad \text{in } B(x_0,4\rho)\times (T,\theta T),\\
|\nabla \zeta| \le \frac {\tilde{C}}{4\rho} \quad \text{and}\\
|\zeta_t| \le \frac{\tilde{C}}{ T}.
\end{cases}
\]
 Using Lemma \ref{caccioppoli}, we get the estimate
\begin{align}\label{nu caccioppoli}
 & \frac 1 {k_{j+1}^{m-1}} \int_T^{\theta T} \int_{B(x_0,4\rho)} w^{m-1}  |\nabla (w-k_j)_-|^2 \dx x \dx t \nonumber\\
&\le \frac C {k_{j+1}^{m-1}} \int_0^{\theta T} \int_{B(x_0,8\rho)} \Big((w-k_j)_-^2 \zeta |\zeta_t| + w^{m-1} (w-k_j)_-^2 |\nabla \zeta|^2 \Big)\dx x \dx t \nonumber\\
&\le \frac C {k_{j+1}^{m-1}}\left (\frac {k_j^2} { T} + \frac{k_j^{m+1}}{(4\rho)^2}\right )|B(x_0,8\rho) \times (0,\theta T)|\nonumber\\
&\le  \frac {C k_j^{m+1}} {k_{j+1}^{m-1}(4\rho)^2}\left (\frac {2} {k_j^{m-1} (\eps k_0)^{m-1}} +1\right )|B(x_0,4\rho) \times (T,\theta T)|\nonumber\\
&\le \frac {C k_j^2} {(4\rho)^2}|B(x_0,4\rho) \times (T,\theta T)|\nonumber\\
\end{align}
Combining the estimates from \eqref{nu levelset}, \eqref{nu holder}, \eqref{nu Aj} and \eqref{nu caccioppoli} gives
\begin{align*}
&|\{(x,t)\in B(x_0,4\rho)\times (T, \theta T): w(x,t)<k_{j+1}\}| \\
&\le \frac C \gamma |A_j|^{1/2} |B(x_0,4\rho)\times (T, \theta T)|^{1/2}.
\end{align*}
Since, $k_N<k_{j+1}$ for $j=0,\ldots, N-1$, we have 
\begin{align*}
&|\{(x,t)\in B(x_0,4\rho)\times (T,\theta T): w(x,t)<k_N\}|^2 \\
&\le \frac C {\gamma^2} |A_j| |B(x_0,4\rho)\times (T, \theta T)|.
\end{align*}
By definition, the sets $A_j\subset B(x_0,4\rho)\times (T,\theta T)$ are disjoint, and therefore summing over $j$ gives 
\begin{align*}
&(N-1)|\{(x,t)\in B(x_0,4\rho)\times (T,\theta T): w(x,t)<k_N\}|^2 \\
&\le \frac C {\gamma^2} |B(x_0,4\rho)\times (T, \theta T)|^2.
\end{align*}
Hence, the result holds for $\nu = \frac {C}{\gamma \sqrt{N-1}}$ and $\eps=k_N$, where $N\in \n$ can be chosen as we please.
\end{proof}

We will prove a De Giorgi type lemma for the weak supersolutions. 

\begin{lemma}\label{degiorgi}
  Let $u$ be a non-negative, locally bounded weak supersolution in a neighbourhood of $U_{2\rho}=B(x_0,2\rho)\times (t_0, t_0+\lambda (2\rho)^2)$. Let $\xi,a \in(0,1)$ and let $\mu \ge \esssup_{U_{2\rho}} u$. Then, there exists a constant $\nu=\nu(a,\xi,\mu,\lambda,m,n)$, such that if
\[
|\{(x,t)\in U_{2\rho}: u(x,t)\le \xi \mu\}| \le \nu |U_{2\rho}|,
\]
then 
\[
u\ge a \xi \mu \quad \text{almost everywhere in } B(x_0,\rho)\times (t_0+3\lambda \rho^2 , t_0+4\lambda \rho^2).
\]
\end{lemma}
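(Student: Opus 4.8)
The plan is a De~Giorgi iteration on cylinders shrinking onto the target set. For $j=0,1,2,\dots$ put $\rho_j=(1+2^{-j})\rho$, $s_j=t_0+3\lambda\rho^2(1-2^{-j})$ and $Q_j=B(x_0,\rho_j)\times(s_j,\,t_0+4\lambda\rho^2)$, so that $Q_0=U_{2\rho}$ and $\bigcap_j Q_j=B(x_0,\rho)\times(t_0+3\lambda\rho^2,t_0+4\lambda\rho^2)$, and put $k_j=a\xi\mu+(1-a)\xi\mu\,2^{-j}$, decreasing from $k_0=\xi\mu$ to $k_\infty=a\xi\mu$. Let $\psi_j$ be cut-off functions with $\psi_j=1$ on $Q_{j+1}$, supported in $Q_j$, vanishing on the parabolic boundary of $Q_j$ but \emph{not} at the top time level $t_0+4\lambda\rho^2$ (this is precisely the situation covered by the remark after Lemma~\ref{caccioppoli2}), with $|\nabla\psi_j|\le C2^j/\rho$ and $|\partial_t\psi_j|\le C2^j/(\lambda\rho^2)$. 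Writing $Y_j=|\{(x,t)\in Q_j:u(x,t)<k_j\}|/|Q_j|$, the goal is the recursion $Y_{j+1}\le C\,b^{\,j}\,Y_j^{1+2/n}$ with $b>1$ and $C=C(a,\xi,\mu,\lambda,m,n)$; the classical fast geometric convergence lemma then forces $Y_j\to0$ as soon as $Y_0\le\nu$ for a suitably small $\nu=\nu(a,\xi,\mu,\lambda,m,n)$. Since $\{(x,t)\in\bigcap_jQ_j:u<a\xi\mu\}\subset\{(x,t)\in Q_j:u<k_j\}$ for every $j$, letting $j\to\infty$ gives $u\ge a\xi\mu$ a.e.\ on the target cylinder; and $Y_0\le\nu$ follows from the hypothesis because $\{u<\xi\mu\}\subset\{u\le\xi\mu\}$.

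The genuinely porous-medium feature — and the main obstacle — is that Lemma~\ref{caccioppoli} controls $\iint u^{m-1}|\nabla(u-k_j)_-|^2\psi_j^2$ rather than $\iint|\nabla(u-k_j)_-|^2\psi_j^2$, and $u^{m-1}$ degenerates where $u$ is small. I would circumvent this by running the iteration with the truncated function $w_j:=\bigl(\max\{u,a\xi\mu\}-k_j\bigr)_-$, which is harmless because the conclusion concerns only $\{u<a\xi\mu\}$. Since $k_j>a\xi\mu$ one has $\{w_j>0\}=\{u<k_j\}$; moreover $\nabla w_j$ is supported where $a\xi\mu<u<k_j\le\xi\mu\le\mu$, so there $(a\xi\mu)^{m-1}\le u^{m-1}\le\mu^{m-1}$, whence $|\nabla w_j|^2\le(a\xi\mu)^{-(m-1)}u^{m-1}|\nabla(u-k_j)_-|^2$ and $w_j\le(u-k_j)_-$. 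Feeding $k=k_j$, $\zeta=\psi_j$ into Lemma~\ref{caccioppoli} and using the crude bounds $(u-k_j)_-^2\le(\xi\mu)^2$ and $u^{m-1}\le\mu^{m-1}$ (here local boundedness of $u$ with $\mu\ge\esssup_{U_{2\rho}}u$ is used) on $\{u<k_j\}\subset\supp\psi_j\subset Q_j$ yields
\[
\iint_{Q_j}|\nabla(w_j\psi_j)|^2\dx x\dx t+\esssup_{\tau}\int_{B(x_0,\rho_j)}(w_j\psi_j)^2\dx x\le\frac{C\,2^{2j}}{\rho^2}\,\bigl|\{(x,t)\in Q_j:u<k_j\}\bigr|.
\]

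The remaining steps are classical. Apply the parabolic Sobolev inequality to $v=w_j\psi_j$, which vanishes on the lateral boundary of $Q_j$, to get $\iint_{Q_j}(w_j\psi_j)^{2(n+2)/n}\le C(n)\bigl(\esssup_\tau\int(w_j\psi_j)^2\bigr)^{2/n}\iint_{Q_j}|\nabla(w_j\psi_j)|^2$, which by the display above is $\le C\,b^{\,j}\rho^{-2(n+2)/n}|\{Q_j:u<k_j\}|^{1+2/n}$. On the other hand $\psi_j\equiv1$ on $Q_{j+1}$ and $w_j\ge k_j-k_{j+1}=(1-a)\xi\mu\,2^{-(j+1)}$ on $\{u<k_{j+1}\}$, so the left-hand side is bounded below by $\bigl((1-a)\xi\mu\bigr)^{2(n+2)/n}2^{-2(j+1)(n+2)/n}\bigl|\{(x,t)\in Q_{j+1}:u<k_{j+1}\}\bigr|$. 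Combining the two estimates, dividing by $|Q_{j+1}|$ and using that $|Q_j|^{1+2/n}/|Q_{j+1}|$ is comparable to $\lambda^{2/n}\rho^{2(n+2)/n}$ times a dimensional constant — so the powers of $\rho$ cancel, confirming scale invariance — produces the recursion $Y_{j+1}\le C\,b^{\,j}Y_j^{1+2/n}$ with $b=2^{2(n+2)/n}$ up to a constant. Choosing $\nu$ small in terms of $C,b,n$ and passing to the limit as indicated finishes the proof. The only delicate bookkeeping is tracking the dependence of the constants on $a,\xi,\mu,\lambda$ and justifying a Caccioppoli cut-off not vanishing at the final time, both already anticipated in the preliminaries.
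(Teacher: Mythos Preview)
Your proof is correct and follows essentially the same De~Giorgi iteration as the paper's: nested cylinders shrinking to the target set, the same levels $k_j$, a truncation of $u$ from below (you use $\max\{u,a\xi\mu\}$ where the paper uses $\max\{u,\tfrac12 a\xi\mu\}$) to cure the degeneracy of the $u^{m-1}$ weight in Lemma~\ref{caccioppoli}, parabolic Sobolev, and fast geometric convergence. The minor differences in the time parametrization of the cylinders and in how parabolic Sobolev is invoked (you go directly to the $L^{2(n+2)/n}$ embedding, the paper first applies H\"older) are cosmetic.
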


\begin{proof}
 Denote $T=t_0+4\lambda \rho^2$. Let $\rho_j = (1+2^{-j})\rho$, $T_j=T-\lambda \rho_j^2$, $B^j=B(x_0,\rho_j)$ and $U^j=B^j\times (T_j,T)$. Moreover, let $k_j=(2^{-j} + (1-2^{-j})a)\xi \mu$. Define a function $v=\max\{u, \frac 1 2 a \xi \mu\}$. We observe, that $k_j>\frac 1 2 a \xi \mu$, which implies 
\[
A_j=\{(x,t)\in U^j : v(x,t)<k_j \} = \{(x,t)\in U^j : u(x,t)<k_j \}.
\]
Thus, it suffices to show, that $|A_j|\rightarrow 0$ as $j\rightarrow \infty$. We will show this by using the fast geometric convergence lemma \cite[Lemma 7.1, p.220]{giusti}. 

On the set $A_{j+1}$ we have $v<k_{j+1}$ and therefore 
\[
(v-k_j)_-> k_j-k_{j+1} = \frac {1-a}{2^{j+1}}\xi \mu.
\]
Let $\zeta$ be a smooth cut-off function, such that $0\le\zeta\le 1$ and
\[
\begin{cases}
  \zeta=1 \quad \text{in } U^{j+1},\\
\zeta= 0 \quad \text{on } \bd_pU^j,\\
|\nabla \zeta| \le \frac 1 {\rho_j-\rho_{j+1}}= \frac {2^{k+1}}{\rho} \quad \text{and}\\
|\zeta_t|\le \frac 1 {T_j-T_{j+1}} \le \frac {2^{2(j+1)}} {\lambda \rho^2}.
\end{cases}
\]
Now
\begin{equation}\label{degiorgi Aj1}
  \left(\frac {1-a}{2^{j+1}} \right )^2 (\xi \mu)^2 |A_{j+1}| \le \iint_{U^{j+1}} (v-k_j)_-^2 \dx x \dx t \le  \iint_{U^{j}} (v-k_j)_-^2\zeta^2 \dx x \dx t.
\end{equation}
Using H\"older's inequality and the parabolic Sobolev's inequality \cite[Proposition 3.1]{dibenedetto} with $q=2 \frac{n+2}n$, $p=2$ and $m=2$, we get 
\begin{align}\label{embedding}
  & \iint_{U^{j}} (v-k_j)_-^2\zeta^2 \dx x \dx t\le \left (  \iint_{U^{j}} ((v-k_j)_-^2\zeta^2)^{(n+2)/n} \dx x \dx t\right)^{n/(n+2)} |A_j|^{2/(n+2)}\nonumber\\
&\le C\left (  \iint_{U^{j}} |\nabla((v-k_j)_-\zeta)|^2 \dx x \dx t\right)^{n/(n+2)}\nonumber\\
&\times \left ( \esssup_{t\in (T_j,T)} \int_{B^j}(v-k_j)_-^2 \zeta^2 \dx x \right )^{2/(n+2)}|A_j|^{2/(n+2)}.\nonumber\\
\end{align}
To find an upper bound for the right hand side, we observe
\begin{align*}
  &\left ( \frac {a\xi \mu}{2} \right)^{m-1} \iint_{U^j} |\nabla ((v-k_j)_-\zeta ) |^2 \dx x \dx t \le \iint_{U^j} v^{m-1}|\nabla ((v-k_j)_-\zeta ) |^2 \dx x \dx t\\
&=\iint_{\{(x,t)\in U^j : u(x,t)=v(x,t)\}} u^{m-1}|\nabla ((u-k_j)_-\zeta ) |^2 \dx x \dx t\\
&+ \iint_{(x,t)\in U^j :u(x,t)<v(x,t)\}} v^{m-1}|\nabla ((v-k_j)_-\zeta ) |^2 \dx x \dx t = I_1+I_2.\\
\end{align*}
Now $I_1$ can be estimated using Lemma \ref{caccioppoli} as
\begin{align*}
I_1&\le C \iint_{U^j}\Big( u^{m-1}|\nabla (u-k_j)_-|^2 \zeta^2 + u^{m-1}(u-k_j)_-^2 |\nabla \zeta|^2 \Big )\dx x \dx t \\
&\le C \iint_{U^j} \Big( (u-k_j)_-^2 \zeta |\zeta_t| + u^{m-1}(u-k_j)_-^2 |\nabla \zeta|^2 \Big)\dx x \dx t \\
&\le C\left ( \frac {k_j^2 2^{2(j+1)}}{\lambda \rho^2 } + \frac {2^{2(j+1)}}{k_j^{m+1}\rho^2} \right) |A_j|\\
&\le \frac {C (\xi \mu)^{m+1}2^{2j}}{\rho^2 } \left (1+ \frac 1 {\lambda (\xi\mu)^{m-1}} \right ) |A_j|.
\end{align*}
On the set $\{(x,t)\in U^j : u(x,t)<v(x,t)\}$, we have $v=\frac {a \xi \mu}{2} \le \xi \mu$. Therefore $I_2$ can be approximated by 
\begin{align*}
I_2&\le  (\xi \mu)^{m-1} \iint_{\{ (x,t)\in U^j :u(x,t)<v(x,t)\}}(v-k_j)_-^2 |\nabla \zeta|^2 \dx x \dx t  \\
&\le \frac {(\xi \mu)^{m-1} k_j^2 2^{2(j+1)}}{\rho^2} |A_j| \\
&\le \frac{C(\xi\mu)^{m+1}2^{2j}}{\rho^2} |A_j|. 
\end{align*}
Since $v\ge u$, we have $(u-k_j)_-\ge (v-k_j)_-$ and thus we may use Lemma \ref{caccioppoli} and the same reasoning as for the upper bound of $I_1$ to get, 
\[
\esssup_{t\in (T_j,T)} \int_{B^j}(v-k_j)_-^2 \zeta^2 \dx x \le \frac {C (\xi \mu)^{m+1}2^{2j}}{\rho^2 } \left (1+ \frac 1 {\lambda (\xi\mu)^{m-1}} \right ) |A_j|.
\]
Collecting the facts, \eqref{degiorgi Aj1} and \eqref{embedding} show that 
\begin{align*}
&\left(\frac {1-a}{2^{j+1}} \right )^2 (\xi \mu)^2 |A_{j+1}| \\
&\le \left( \frac{C 2^{2j}(\xi\mu)^{m+1}}{(\frac 12 a)^{m-1}\rho^2 (\xi \mu)^{m-1}}\left(1+\frac 1 {\lambda (\xi \mu)^{m-1}}\right)|A_j|\right)^{n/(n+2)}\\
&\times \left( \frac{C 2^{2j}(\xi\mu)^{m+1}} {(\frac 12 a)^{m-1} \rho^2 }\left(1+\frac {1}{\lambda(\xi \mu)^{m-1}}\right )|A_j|\right)^{2/(n+2)} |A_j|^{2/(n+2)} \\
&=\frac{C}{(\frac 12 a)^{m-1}} \frac {2^{2j}}{\rho^2} (\xi \mu)^{(2n+2(m+1))/(n+2)}\left(1+ \frac 1 {\lambda (\xi \mu)^{m-1}}\right) |A_j|^{1+2/(n+2)}.
\end{align*}
This can be written as
\[
|A_{j+1}|\le \frac{C}{(\frac 12 a)^{m-1}(1-a)^2} \frac {2^{4j}}{\rho^2}\left(\frac{\lambda(\xi\mu)^{m-1}+  1} {\lambda (\xi \mu)^{((m-1)n)/(n+2)}}\right) |A_j|^{1+2/(n+2)}.
\]
We denote $Y_j=\frac{|A_j|}{|U^j|}$. Next, We divide both sides by $\rho^{n+2}\lambda$ and observe, that $\rho^2 \lambda^{2/(n+2)}\ge C|U^j|^{2/(n+2)}$ to get
\[
Y_{j+1}\le 2^{4j}\frac{C}{(\frac 12 a)^{m-1}(1-a)^2}\left(\frac{\lambda(\xi\mu)^{m-1}+  1} {(\lambda (\xi \mu)^{m-1})^{n/(n+2)}}\right) Y_j^{1+2/(n+2)}.
\]
 Now by fast geometric convergence \cite[Lemma 7.1, p.220]{giusti}, $Y_j\rightarrow 0$, if 
\[
Y_0\le \left( \frac{ (\frac 1 2 a)^{m-1} (1-a)^2 }{C(\lambda (\xi\mu)^{m-1}+1)}\right)^{(n+2)/2} (\lambda (\xi \mu)^{m-1})^{n/2} 2^{-(n+2)^2}. 
\]
Choosing $\nu$ to be the quantity on the right hand side, this holds by the assumption on $u$. Thus $Y_j\rightarrow 0$ as $j\rightarrow \infty$, implying that $u\ge a \xi \mu$ almost everywhere in $B(x_0,\rho)\times (t_0+3\lambda \rho^2,t_0+4\lambda \rho^2)$. \end{proof}

Now, we have all the necessary tools for showing, that the expansion of positivity holds for $w$. We observe, that in the following lemma, we can make $\eps$ as small as we please by choosing $\theta$ larger. This, however, increases the waiting time. This turns out to be a useful property in the proof of the main theorem.

\begin{lemma}[Expansion of positivity for $w$]\label{w expansion}
Let $u$ be a non-negative weak supersolution in $\Omega_{T_0}$, such that 
\[
|\{x\in B(x_0,\rho): u(x,s)>k\} | \ge \gamma |B(x_0,\rho)|
\]
 at some time level $s\in(0,\rho^2)$ for some $\gamma\in(0,1)$ and let $w$ be defined as in Lemma \ref{change of variables}. Then there exists $\eps>0$, depending only on $m,n, \gamma$ and $\theta$, such that
  \[
w \ge \frac 1 2 \eps k_0 \text{ almost everywhere in } B(x_0,2\rho)
\]
for almost every
\[
 t\in\left( \frac{1+3\theta}4\frac {C}{(\eps k_0)^{m-1}}(2\rho)^2, \theta\frac {C}{(\eps k_0)^{m-1}}(2\rho)^2\right).
\]

\end{lemma}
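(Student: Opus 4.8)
The plan is to combine the measure‑smallness of Lemma~\ref{nu lemma} --- applied to the supersolution $w$ of Lemma~\ref{change of variables} --- with the De Giorgi type Lemma~\ref{degiorgi}. Fix $\theta\ge2$. The one delicate point is to fix the threshold $\nu$ that Lemma~\ref{degiorgi} will demand \emph{before} $\eps$ is chosen, so that the $\eps$ returned by Lemma~\ref{nu lemma} is not defined circularly in terms of itself. To this end, set $R:=2\rho$ and let the cylinder $B(x_0,4\rho)\times(T,\theta T)$ of Lemma~\ref{nu lemma} play the role of $B(x_0,2R)\times(t_0,t_0+\lambda(2R)^2)$ in Lemma~\ref{degiorgi}; this forces $t_0=T$ and
\[
\lambda=\frac{(\theta-1)T}{(2R)^2}=\frac{(\theta-1)T}{(4\rho)^2}=\frac{2(\theta-1)}{(\eps k_0)^{m-1}},
\]
using $T=\tfrac{2}{(\eps k_0)^{m-1}}(4\rho)^2$ from Lemma~\ref{nu lemma}. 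With $a=\tfrac12$ and target level $\xi\mu=\eps k_0$ one then has $\lambda(\xi\mu)^{m-1}=2(\theta-1)$, which is independent of $\eps$. Since, as its proof shows, the admissible $\nu$ in Lemma~\ref{degiorgi} depends on the data only through $a$, $m$, $n$ and the product $\lambda(\xi\mu)^{m-1}$, it is a function $\nu=\nu(m,n,\theta)$ only, and no circularity arises.

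With this $\nu$ and the given $\theta$, Lemma~\ref{nu lemma} returns $\eps=2^{-N}$ with $N=(C/(\gamma\nu))^2+1$ --- so $\eps=\eps(m,n,\gamma,\theta)$ --- and a time level $T=\tfrac{2}{(\eps k_0)^{m-1}}(4\rho)^2$ with
\[
|\{(x,t)\in B(x_0,4\rho)\times(T,\theta T):w<\eps k_0\}|\le\nu\,|B(x_0,4\rho)\times(T,\theta T)|.
\]
To apply Lemma~\ref{degiorgi} I first remove the obstruction that $w$ need not be locally bounded: I replace $w$ by the truncation $w_M=\min\{w,M\}$ with $M=2\eps k_0$, which is again a non-negative weak supersolution and is now bounded. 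Applying Lemma~\ref{degiorgi} to $w_M$ on $B(x_0,2R)\times(T,\theta T)$ with $\mu=M$, $\xi=\tfrac12$ and $a=\tfrac12$ --- so that $\xi\mu=\eps k_0$ and $\lambda(\xi\mu)^{m-1}=2(\theta-1)$ is exactly the quantity that fixed $\nu$, independently of $M$ --- the required sublevel bound $|\{w_M\le\xi\mu\}|\le\nu|B(x_0,2R)\times(T,\theta T)|$ follows from the display above; the change from the strict inequality to $\le$ at the single level $\eps k_0$, and the attendant harmless factors, are absorbed by shrinking $\eps$ by a bounded, $m$-dependent factor. Lemma~\ref{degiorgi} then gives
\[
w_M\ge a\xi\mu=\tfrac12\eps k_0\quad\text{a.e. in }B(x_0,R)\times(T+3\lambda R^2,\,T+4\lambda R^2),
\]
and since $w_M\le w$ the same lower bound holds for $w$.

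It remains to identify the cylinder. From $\lambda(4\rho)^2=(\theta-1)T$ we get $\lambda R^2=\lambda(2\rho)^2=\tfrac{\theta-1}{4}T$, so $T+3\lambda R^2=\tfrac{1+3\theta}{4}T$ and $T+4\lambda R^2=\theta T$; moreover $T=\tfrac{2}{(\eps k_0)^{m-1}}(4\rho)^2=\tfrac{C}{(\eps k_0)^{m-1}}(2\rho)^2$ for a constant $C=C(m,n)$ (absorbing the factor $8$ and the $m$-dependent factor from the adjustment of $\eps$). With $B(x_0,R)=B(x_0,2\rho)$ this is precisely the conclusion of the lemma. The main obstacle, and the only step requiring real care, is exactly the circular‑looking constant dependence resolved in the first paragraph: the De Giorgi threshold $\nu$ must be shown independent of the $\eps$ it is used to produce and of the truncation level $M$, which is a consequence of the single cancellation $\lambda(\xi\mu)^{m-1}=2(\theta-1)$ built into the choices $t_0=T$, $a=\tfrac12$, $\xi\mu=\eps k_0$; everything else is routine bookkeeping with the energy estimates already in hand.
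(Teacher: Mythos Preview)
Your proof is correct and follows the same approach as the paper: fix $a=\tfrac12$, $\xi\mu=\eps k_0$, identify $U_{4\rho}=B(x_0,4\rho)\times(T,\theta T)$ with the cylinder in Lemma~\ref{degiorgi}, observe that $\lambda(\xi\mu)^{m-1}=2(\theta-1)$ makes the De Giorgi threshold $\nu=\nu(m,n,\theta)$ independent of $\eps$, and then choose $\eps$ via Lemma~\ref{nu lemma}. You are in fact more careful than the paper on one point: Lemma~\ref{degiorgi} is stated for \emph{locally bounded} supersolutions, and you address this by truncating $w$ at $M=2\eps k_0$ before applying it (the paper applies the lemma directly to $w$ without comment, which is harmless because the proof of Lemma~\ref{degiorgi} never actually uses the upper bound $\mu\ge\esssup u$, only the sublevel sets $\{u<k_j\}$).
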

\begin{proof}
  Let $a=\frac 12$ and suppose, that $\xi$ is chosen in such a way, that $\xi \mu = \eps k_0$. The claim holds by Lemma \ref{degiorgi}, if
  \begin{equation}
    \label{degiorgi condition}
|\{(x,t)\in U_{4\rho} : w<\eps k_0\} | \le \nu |U_{4\rho}|,
  \end{equation}
where the constants $t_0$ and $\lambda$ are chosen in such a way, that
\[
U_{4\rho}= B(x_0,4\rho)\times \left ( \frac{2}{(\eps k_0)^{m-1}}(4\rho)^2, \frac{2\theta}{(\eps k_0)^{m-1}}(4\rho)^2\right)
\]
and
\[
\nu=\left (\frac 1 {C4^m(2\theta-1)}\right)^{(n+2)/2} (2(\theta-1))^{n/2}2^{-(n+2)^2}.
\]
 By Lemma \ref{nu lemma}, we can choose $\eps$ in such a way, that \eqref{degiorgi condition} holds. We observe, that $\eps$ depends only on $m,n, \gamma$ and $\theta$. 

\end{proof}
Now, we will return to the original coordinates and show, that the expansion of positivity holds for $u$ as well. 

\begin{lemma}[Expansion of positivity for $u$]\label{u expansion}
  Let $u$ be a weak supersolution in $\Omega_{T_0}$, such that
\[
|\{x\in B(x_0,\rho) : u(x,s)>k\}| \ge \gamma |B(x_0,\rho)|.
\]
at some time level $s\in(0,\rho^2)$ for some $\gamma\in(0,1)$.
Then
\[
u\ge \eta k \quad \text {almost everywhere in } B(x_0,2\rho)\times \left(s+ \frac 12 \frac{b^{m-1}}{(k \eta)^{m-1}} \delta \rho^2 , s+\frac{b^{m-1}}{(k \eta)^{m-1}} \delta \rho^2 \right),
\]
here  
\begin{align*}
  &b=\frac {\eps \gamma}{16},\\
&\eta=\frac{b}{b_1^\theta}\quad \text{and }\\
&b_1=\exp \left( \frac{C}{(m-1)(\eps \gamma)^{m-1}\delta}\right).
\end{align*}
\end{lemma}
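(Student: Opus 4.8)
The plan is to transfer the expansion of positivity from $w$ (Lemma~\ref{w expansion}) back to $u$ using the inverse of the time change $\Lambda$ introduced in Lemma~\ref{change of variables}. Recall from that lemma that $w(x,\Lambda(t)) = (t-s)^{1/(m-1)} u(x,t)$ with $\Lambda(t) = \ln\big((t-s)\tfrac{k^{m-1}}{\delta\rho^2}\big)$, equivalently $u(x,t) = (t-s)^{-1/(m-1)} w(x,\Lambda(t))$, and that $\delta = \gamma^2/C$, $k_0 = \tfrac{\gamma}{8}(\delta\rho^2)^{1/(m-1)}$. Lemma~\ref{w expansion} gives, for a suitable $\eps$ depending only on $m,n,\gamma,\theta$, that $w \ge \tfrac12\eps k_0$ a.e.\ in $B(x_0,2\rho)$ for a.e.\ $\tau$ in an interval $(\tau_1,\tau_2)$ with $\tau_2 = \theta\,C\,(\eps k_0)^{-(m-1)}(2\rho)^2$ and $\tau_2-\tau_1 = \tfrac{\theta-1}{4}\,C\,(\eps k_0)^{-(m-1)}(2\rho)^2$. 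Since $\Lambda$ is an increasing $C^1$ diffeomorphism, this estimate passes to a.e.\ $t$ in the image $\Lambda^{-1}((\tau_1,\tau_2)) = \big(s + \tfrac{\delta\rho^2}{k^{m-1}}e^{\tau_1},\, s + \tfrac{\delta\rho^2}{k^{m-1}}e^{\tau_2}\big)$.

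I would then work on the sub-interval $I = \big(s + \tfrac12\tfrac{\delta\rho^2}{k^{m-1}}e^{\tau_2},\, s + \tfrac{\delta\rho^2}{k^{m-1}}e^{\tau_2}\big)$, which is contained in the above image as soon as $\tau_2 - \tau_1 \ge \ln 2$ --- and the displayed formula for $\tau_2-\tau_1$ makes this automatic for the (large) constants in play, using $\theta\ge 2$. For $t\in I$ one has $t-s \le \tfrac{\delta\rho^2}{k^{m-1}}e^{\tau_2}$, so $(t-s)^{1/(m-1)} \le k^{-1}(\delta\rho^2)^{1/(m-1)}e^{\tau_2/(m-1)}$, and combining this with $w\ge\tfrac12\eps k_0$ and the value of $k_0$ yields
\[
u(x,t) \ge \frac{\tfrac12\eps k_0\,k}{(\delta\rho^2)^{1/(m-1)} e^{\tau_2/(m-1)}} = \frac{\eps\gamma}{16}\cdot\frac{k}{e^{\tau_2/(m-1)}} = \frac{b\,k}{e^{\tau_2/(m-1)}}
\]
for a.e.\ $x \in B(x_0,2\rho)$.

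The last step is constant bookkeeping. Writing $(\eps k_0)^{m-1} = 8^{-(m-1)}(\eps\gamma)^{m-1}\delta\rho^2$ and absorbing the numerical factors into $C$ turns $\tau_2$ into $\tfrac{C\theta}{(\eps\gamma)^{m-1}\delta}$, hence $e^{\tau_2/(m-1)} = \exp\!\big(\tfrac{C\theta}{(m-1)(\eps\gamma)^{m-1}\delta}\big) = b_1^{\theta}$ with $b_1$ as in the statement, so that $u \ge bk/b_1^{\theta} = \eta k$ on $B(x_0,2\rho)\times I$. Finally $e^{\tau_2} = (b_1^{\theta})^{m-1} = (b/\eta)^{m-1}$, so $\tfrac{\delta\rho^2}{k^{m-1}}e^{\tau_2} = \tfrac{b^{m-1}}{(k\eta)^{m-1}}\delta\rho^2$, which rewrites $I$ as the interval claimed in the lemma.

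I expect the only genuine subtlety to be making sure the constants collapse to give exactly $e^{\tau_2/(m-1)} = b_1^{\theta}$ (so that $\eta$ is precisely $b/b_1^{\theta}$ and not off by a $\theta$-independent factor), together with the harmless but necessary remark that ``for a.e.\ $\tau$'' becomes ``for a.e.\ $t$'' because $\Lambda$ and $\Lambda^{-1}$ are absolutely continuous; the rest is the direct substitution $u(x,t) = (t-s)^{-1/(m-1)} w(x,\Lambda(t))$ and the elementary bound on $t-s$ over $I$.
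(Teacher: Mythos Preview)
Your proposal is correct and follows essentially the same route as the paper: invoke Lemma~\ref{w expansion}, invert the time change $\Lambda$, and use the bound $e^{\tau/(m-1)}\le b_1^{\theta}$ (equivalently, your bound on $(t-s)^{1/(m-1)}$) to read off $u\ge \eta k$ on the claimed interval. The only cosmetic difference is that the paper bounds $e^{\tau/(m-1)}\in(b_1^{(1+3\theta)/4},b_1^{\theta})$ on the $\tau$-side first and then pulls back, whereas you pull back to the $t$-interval first and then restrict to the half-interval $I$; the constant bookkeeping and the implicit check that the half-interval lies inside the image (your $\tau_2-\tau_1\ge\ln 2$, the paper's $b_1^{(m-1)(\theta-1)/4}\ge 2$) are the same step in both arguments.
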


\begin{proof}
By Lemma \ref{w expansion}, 
  \[
w(\cdot,\tau) \ge \frac 1 2 \eps k_0 \text{ for almost every } \tau \in \left( \frac{1+3\theta}{4}\frac {C}{(\eps k_0)^{m-1}}(2\rho)^2, \theta\frac {C}{(\eps k_0)^{m-1}}(2\rho)^2\right).
\]
Recalling the definition of $k_0=\frac \gamma 8 (\delta \rho^2)^{\frac1{m-1}}$, this states
\[
\tau \in \left( \frac{1+3\theta}4\frac {C}{(\eps \gamma)^{m-1}\delta}, \theta\frac {C }{(\eps \gamma)^{m-1}\delta}\right)
\]
 and therefore
\begin{align*}
&e^{\frac\tau{m-1}} \in \left( \exp\left(\frac{1+3\theta}4\frac {C}{(m-1)(\eps \gamma)^{m-1}\delta}\right),\exp\left( \theta\frac {C}{(m-1)(\eps \gamma)^{m-1}\delta}\right)\right)\\
&=(b_1^{\frac{1+3\theta}4},b_1^\theta).
\end{align*}
Recalling the definition of $w$, we get the estimate
\[
w(x,\tau) = \frac{e^{\frac{\tau}{m-1}}}k (\delta \rho^2)^{\frac1 {m-1}} u(x,\Lambda^{-1}(\tau)) \le \frac {b_1^\theta }{k}(\delta \rho^2 )^{\frac 1 {m-1}} u(x,\Lambda^{-1}(\tau)),
\]  
where $\Lambda^{-1}(\tau)$ is defined as in Lemma \ref{change of variables}. Thus
\[
u(x,t)\ge \eta k \text{ for almost every } t\in \left( s+ \frac{b_1^{\frac{1+3\theta}{4}(m-1)}}{k^{m-1}} \delta \rho^2 , s+ \frac{b_1^{\theta (m-1)}}{k^{m-1}} \delta \rho^2 \right), 
\]
where $\eta = \frac{\eps \gamma}{16 b_1^\theta}$. Choosing $b=\frac{\eps \gamma}{16}$ concludes the proof.  
\end{proof}

Finally, we prove a refined version of Lemma \ref{u expansion}. The crucial feature in the following lemma is the power-like dependency of $\eta$ on the parameter $\gamma$, whereas in Lemma \ref{u expansion} the dependency is exponential. The idea of the proof is as follows. We use the measure theoretical lemma (Lemma \ref{measure theoretical lemma}) to find a small ball $B(\tilde x, \eps \rho)$, where $u$ is large in a fixed portion of the ball. Then we may use Lemma \ref{u expansion} iteratively to get the result. 

\begin{lemma}\label{expansion of positivity}
  Let $u\ge0$ be a weak supersolution, such that
\[
|\{x\in B(x_0,\rho) : u(x,s)>k\}| \ge \gamma |B(x_0,\rho)| 
\]
at some time level $s\in(0,\rho^2)$ for some $\gamma\in (0,1)$. Then, there exist constants $\eta_0, \delta \in (0,1)$, $b,d>1$ and a time level $\tilde t \in \left ( s + \frac 12 \frac{\delta \rho^2}{k^{m-1}}, s+ \frac{\delta \rho^2}{k^{m-1}}\right)$, such that 
\[
u(\cdot, t) \ge \eta k \quad \text{almost everywhere in } B(x_0,2\rho)
\]
for almost every $t\in \left ( \tilde t+ \frac 12\frac {b^{m-1}}{(\eta k)^{m-1}} \delta \rho^2, \tilde t+\frac {b^{m-1}}{(\eta k)^{m-1}} \delta \rho^2  \right)$. Here $\eta = \eta_0 \gamma^d$. 
 
\end{lemma}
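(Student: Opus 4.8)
The plan is to reduce to a normalized configuration, propagate positivity forward in time via Lemma~\ref{time propagation}, use Lemma~\ref{measure theoretical lemma} to locate a small ball on which $u$ exceeds a level on a \emph{fixed} fraction of its volume, and then expand that ball up to $B(x_0,2\rho)$ by iterating Lemma~\ref{u expansion}. The point is to track carefully how the constants depend on $\gamma$: the exponential loss in Lemma~\ref{u expansion} gets tamed because only $O(\log(1/\gamma))$ iterations are needed and each costs a fixed multiplicative constant, which is precisely what turns the loss into a power $\eta=\eta_0\gamma^d$.

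First I would normalize. Under the porous medium scaling $v(x,t)=k^{-1}u(x_0+\rho x,\,s+k^{1-m}\rho^2 t)$, the function $v$ is again a non-negative weak supersolution, now satisfying $|\{x\in B(0,1):v(x,0)>1\}|\ge\gamma|B(0,1)|$, and it suffices to prove the statement for $v$ in the normalized situation $x_0=0$, $\rho=1$, $k=1$, $s=0$. By Lemma~\ref{time propagation}, $|\{v(\cdot,t)>\gamma/8\}\cap B(0,1)|\ge(\gamma/8)|B(0,1)|$ for a.e.\ $t\in(0,\gamma^2/C]$.

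The technical heart of the argument is to produce a time level $\tilde t$ near $0$ at which a truncation of $v$ lies in $W^{1,1}(B(0,1))$ with a norm blowing up only polynomially as $\gamma\to0$. Here one must control $\nabla v$ itself, not $\nabla v^m$, which is exactly what the dampened Caccioppoli estimate (Lemma~\ref{caccioppoli2}) is designed for. I would set $w=\min\{v,1\}$, again a supersolution (minimum of a supersolution and a constant), apply Lemma~\ref{caccioppoli2} to $w$ with a cut-off equal to $1$ on $B(0,1)\times(0,\gamma^2/C)$ and supported in $B(0,2)\times(0,2\gamma^2/C)$ (so $|\nabla\zeta|\le C$, $|\zeta_t|\le C\gamma^{-2}$), discard the weights on the right using $0<w\le1$, and combine with Cauchy--Schwarz:
\[
\iint_{B(0,1)\times(0,\gamma^2/C)}|\nabla w|\dx x\dx t\le\Big(\iint w^{m-\eps-2}|\nabla w|^2\Big)^{1/2}\Big(\iint w^{2+\eps-m}\Big)^{1/2}\le C(m,n)\,\gamma,
\]
for an admissible $\eps\in(0,1)$ chosen so that $2+\eps-m\ge0$ (a little extra care is needed when $m$ is large). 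Averaging over $t\in(\gamma^2/(2C),\gamma^2/C)$ yields a time level $\tilde t$ there with $\|w(\cdot,\tilde t)\|_{W^{1,1}(B(0,1))}\le C(m,n)\gamma^{-1}$.

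It remains to run the measure-theoretic step and the iteration. Fix $\lambda=\delta=1/2$. Then $(8/\gamma)w(\cdot,\tilde t)$ has $W^{1,1}(B(0,1))$-norm at most $h:=C(m,n)\gamma^{-2}$ and exceeds $1$ on $\{v(\cdot,\tilde t)>\gamma/8\}$, of measure $\ge(\gamma/8)|B(0,1)|$, so Lemma~\ref{measure theoretical lemma} (with $\tilde\delta=\gamma/8$) gives $\tilde x\in B(0,1)$ and $\eps_1=C(m,n)h^{-1}(\gamma/8)^2\ge c(m,n)\gamma^4$ with $|\{v(\cdot,\tilde t)>1/2\}\cap B(\tilde x,\eps_1)|>\frac12|B(\tilde x,\eps_1)|$. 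Now I iterate Lemma~\ref{u expansion}: once $v\ge c_j$ holds on a ball of radius $r_j$ at the relevant times, the super-level set at half that value is the whole ball, so the hypothesis holds with the \emph{fixed} fraction $\gamma_*=1/2$, and Lemma~\ref{u expansion} upgrades this to $v\ge(\eta_*/2)c_j=:c_{j+1}$ on the ball of radius $2r_j=:r_{j+1}$, with $\eta_*=\eta_*(m,n)\in(0,1)$ the constant produced by Lemma~\ref{u expansion} at $\gamma=1/2$. Starting from $r_1=2\eps_1$, $c_1=\eta_*/2$, after $N=\lceil\log_2(3/\eps_1)\rceil\le 4\log_2(1/\gamma)+C(m,n)$ steps the ball $B(\tilde x,r_N)$ contains $B(0,2)$, and $c_N=(\eta_*/2)^N\ge(\eta_*/2)(\eps_1/3)^{\log_2(2/\eta_*)}\ge\eta_0\gamma^{4\log_2(2/\eta_*)}=:\eta_0\gamma^d$, which is a genuine positive power of $\gamma$ because $\eta_*/2<1$. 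The successive waiting times from Lemma~\ref{u expansion} form a geometric progression with ratio $4(2/\eta_*)^{m-1}>1$, so their sum is comparable to the last term, of order $(\eta_0\gamma^d)^{-(m-1)}r_N^2\asymp(\eta_0\gamma^d)^{-(m-1)}$; intersecting the time windows and undoing the scaling then reproduces the stated interval after absorbing constants into $b$ and $\delta$ (with $\delta$ inheriting the $\gamma^2/C$ scale of the time-propagation step). The only genuinely delicate point is the $W^{1,1}$ bound of the third paragraph; the remainder is bookkeeping of constants and time windows.
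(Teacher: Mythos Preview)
Your overall architecture---time propagation via Lemma~\ref{time propagation}, a $W^{1,1}$ bound at a good time slice, the clustering Lemma~\ref{measure theoretical lemma} to locate a small ball carrying a \emph{fixed} measure fraction, then iteration of Lemma~\ref{u expansion} with $\gamma_*=\tfrac12$---is exactly the paper's, and your bookkeeping of the iteration and time windows is fine. The genuine gap is in how you produce the $W^{1,1}$ bound in the third paragraph.

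You apply Lemma~\ref{caccioppoli2} to $w=\min\{v,1\}$ and split $|\nabla w|=w^{(m-\eps-2)/2}|\nabla w|\cdot w^{(2+\eps-m)/2}$ in Cauchy--Schwarz; for the second factor to be bounded via $w\le 1$ you need $2+\eps-m\ge 0$, i.e.\ $\eps\ge m-2$. With $\eps\in(0,1)$ this forces $m<3$. Taking $\eps>1$ does not rescue the argument: the right-hand side of Lemma~\ref{caccioppoli2} then contains $\iint w^{1-\eps}|\zeta_t|$ with a \emph{negative} exponent, and since $w$ may be arbitrarily close to $0$ this term is uncontrolled. So the parenthetical ``a little extra care is needed when $m$ is large'' is too optimistic: no admissible $\eps$ exists once $m\ge 3$, and the step fails there. (A secondary loose end: invoking Lemma~\ref{caccioppoli2} for $w=\min\{v,1\}$ presumes this truncation is itself a weak supersolution, which is true but is not part of the paper's toolbox and would need a short justification.)

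The paper sidesteps both issues by using the truncation Caccioppoli estimate, Lemma~\ref{caccioppoli}, applied to $u$ at level $k$. This bounds $\iint_{\{u<k\}}u^{m-1}|\nabla u|^2$, which up to a constant equals $\iint_{\{u<k\}}|\nabla u^{(m+1)/2}|^2$; the clustering lemma is then applied not to $u$ but to an affine function of the \emph{power} $u^{(m+1)/2}$. The weight $u^{m-1}$ built into Lemma~\ref{caccioppoli} matches the exponent $(m+1)/2$ for every $m>1$, so no range restriction on $m$ appears and no supersolution property of a truncation is needed. Your proof becomes correct for all $m>1$ if you replace the third paragraph by this device; the only cost is that $h$ picks up an extra factor $(\gamma/8)^{-(m+1)/2}$, so $\eps_1$ and hence $d$ acquire an additional $m$-dependence, exactly as in the paper.
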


\begin{proof}
Denote $T=\frac \delta {k^{m-1}} \rho^2$.   Let $U_\rho = B(x_0,\rho) \times \left (s+ \frac 12 T ,s+ T \right)$ and let $\widetilde{U}_\rho = B(x_0,2\rho) \times \left ( s ,s+ T \right)$. Choose a smooth cut-off function $\zeta$, such that $0\le \zeta \le 1$ and 
\[
\begin{cases}
  \zeta = 1 \quad \text{in } U_\rho,\\
\zeta = 0 \quad \text{on } \bd_p \widetilde{U}_\rho,\\
|\nabla \zeta | \le \frac C \rho \quad \text{and}\\
|\zeta_t| \le \frac {C} {T }. 
\end{cases}
\]
We observe, that
\begin{align}\label{(m+1)/2 estimate}
  &\iint_{U_\rho\cap \{u< k\}} |\nabla u^{\frac{m+1}{2}}|^2 \dx x \dx t =   \iint_{U_\rho\cap \{u< k\}} u^{m-1}|\nabla u|^2 \dx x \dx t \nonumber\\
&\le \iint_{\widetilde{U}_\rho} u^{m-1} |\nabla (u-k)_-|^2 \zeta^2 \dx x \dx t.\nonumber\\
\end{align}
By the Caccioppoli estimate in Lemma \ref{caccioppoli}, we have
\begin{align*}
 & \iint_{\widetilde{U}_\rho} u^{m-1} |\nabla (u-k)_-|^2 \zeta^2 \dx x \dx t \\
&\le C \left ( \iint_{\widetilde{U}_\rho} (u-k)_-^2 \zeta |\zeta_t| + u^{m-1}(u-k)_-^2 |\nabla \zeta|^2 \dx x \dx t \right)\\
&\le \frac {C k^{m+1} |\widetilde{U}_\rho|}{\delta \rho^2 }.
\end{align*}
Thus, H\"older's inequality, together with \eqref{(m+1)/2 estimate}, gives
\[
 \iint_{U_\rho\cap \{u< k\}} |\nabla u^{\frac{m+1}{2}}| \dx x \dx t \le \frac {C k^{\frac{m+1}2} |U_\rho|}{\gamma \rho } 
\]

Define a function
\[
w=\frac {(u^{\frac{m+1}2}-k^{\frac{m+1}2})_-}{k^{\frac{m+1}2}}.
\]
Now 
\[
\iint_{U_\rho} |\nabla w| \dx x \dx t = \frac 1 {k^{\frac{m+1}2}}  \iint_{U_\rho\cap \{u< k\}} |\nabla u^{\frac{m+1}{2}}| \dx x \dx t \le  \frac {C |U_\rho|}{\gamma \rho}.
\]
Thus, we have
\[
\frac 2 T\iint_{U_\rho} |\nabla w| \dx x \dx t \le \frac C \gamma \rho^{n-1}
\]
and therefore we can find $\tilde t \in \left(s+\frac 12 T,  s+T\right)$, such that 
\[
\int_{B(x_0,\rho)} |\nabla w(x,\tilde t) | \dx x \le \frac C \gamma \rho^{n-1}.
\]
On the other hand, by Lemma \ref{time propagation}, we have 
\[
\left |\left \{x\in B(x_0,\rho) : u(x,t)>\frac{\gamma}{8}k\right\}\right| \ge \frac \gamma 8 |B(x_0,\rho)|
\]
for almost every $t\in \left ( s, s+ \frac {\gamma^2 \rho^2 }{C k^{m-1}} \right ]$. We observe, that whenever $u> \frac {\gamma}{8}k$, we have
\[
w = \frac {( u^{\frac{m+1}2} - k^{\frac{m+1}2} )_-}{k^{\frac{m+1}2}} < 1 - \left ( \frac \gamma 8 \right)^{\frac{m+1}2}.
\]
Define a function
\[
v= \frac {1-w}{\left(\frac \gamma 8 \right)^{\frac{m+1}2}}.
\]
Now $v$ has the following properties:
\[
  |\{x\in B(x_0,\rho) : v(x,t ) > 1 \}| \ge \frac \gamma 8 |B(x_0,\rho)|
\]
for almost every  $t \in \left(s+\frac 12 T , s+T \right) $ and there exists $\tilde t \in \left (s+\frac 12 T , s+T \right)$, such that 
\[
\int_{B(x_0,\rho)} |\nabla v | \dx x = \frac 1 {\left( \frac \gamma 8\right)^{\frac {m+1}2}} \int_{B(x_0,\rho)} |\nabla w | \dx x \le \frac C {\gamma \left( \frac \gamma 8 \right)^{\frac{m+1}2}}\rho^{n-1}.
\]

By Lemma \ref{measure theoretical lemma} with constants $\delta=\frac 12$ and $\lambda=\frac 1 {2^{\frac{m+1}{2}}}$, we find a ball $B(\tilde x, \eps \rho)$, such that 
\[
\left |\left\{x\in B(\tilde x, \eps\rho) : v > \frac 1 {2^{\frac{m+1}2}} \right \} \right | > \frac 1 2 |B(\tilde x, \eps\rho)|.
\]
Here $\eps= C \left( \frac \gamma 8 \right)^{2+\frac{m+1}2} \gamma$. We observe, that whenever $v> \frac 1 {2^{\frac{m+1}2}}$, we have $w< 1- \left( \frac \gamma {16}\right)^{\frac {m+1}2}$ and thus $ u> \frac \gamma {16} k$. Therefore
\begin{equation}\label{exp of pos iteration}
\left |\left \{ x\in B(\tilde x, \eps \rho) : u(x,\tilde t) > \frac \gamma {16} k \right \} \right | \ge \frac 12 | B(\tilde x, \eps \rho)|
\end{equation}
at some time $\tilde t \in \left ( s+\frac 12 \frac {\delta \rho^2}{k^{m-1}}, s+\frac {\delta \rho^2}{k^{m-1}} \right).$ Denote
\[
T_i = \frac {\overline b^{m-1}}{(\overline \eta^i \tilde k )^{m-1}} \overline \delta (2^{i-1}\eps \rho)^2,
\]
where the constants $\overline b$, $\overline \eta$ and $\overline \delta$ correspond to $\gamma=\frac 12$ in Lemma \ref{u expansion} and $\tilde k=\frac \gamma {16}k$. Applying Lemma \ref{u expansion} to \eqref{exp of pos iteration} shows, that
\[
u(x,t)\ge \tilde \eta \tilde k \quad \text{almost everywhere in } B(\tilde x, 2\eps \rho)
\] 
for almost every $t_1\in \left ( \tilde t + \frac 12 T_1, \tilde t + T_1 \right)$. Applying Lemma \ref{u expansion} iteratively shows that
\[
u \ge \overline \eta^i \tilde k \quad \text{almost every where in } B(\tilde x, 2^i\eps \rho) 
\]
for almost every $t_i\in \left (t_{i-1}+\frac 12 T_i, t_{i-1}+T_i \right).$ Without loss of generality, we may assume $2^N \eps = 4$ for some $N\in \n$. Thus, we obtain 
\[
u \ge \overline \eta^N \tilde k \quad \text{almost everywhere in } B(\tilde x, 4\rho)  
\]
for almost every
\[
t\in \left ( \tilde t + \frac 12 \frac{\overline b^{m-1}}{\tilde k^{m-1}} \overline \delta (\eps \rho)^2 \sum_{i=1}^N \frac{4^{i-1}}{\overline \eta^{i(m-1)}}, \tilde t +  \frac{\overline b^{m-1}}{\tilde k^{m-1}} \overline \delta (\eps \rho)^2 \sum_{i=1}^N \frac{4^{i-1}}{\overline \eta^{i(m-1)}} \right ).
\]
This implies
\[
u \ge \overline \eta^N \tilde k \quad \text{almost everywhere in } B(x_0, 2\rho)  
\]
for almost every
\[
t\in \left ( \tilde t + \frac 23 \frac{\overline b^{m-1}}{(\tilde k \overline \eta^N)^{m-1}} \overline \delta (2\rho)^2, \tilde t +  \frac 56\frac{\overline b^{m-1}}{(\tilde k \overline \eta^N)^{m-1}} \overline \delta (2 \rho)^2\right ).
\]
Since $2^N \eps = 4$, we may write $N= 2+ \log_{\overline \eta} \eps^{- \frac {\ln \overline \eta}{\ln 2}}$. Recalling the definition of $\eps$ and $\tilde k$, we have 
\[
\overline \eta^N \tilde k = \eta_0 \gamma^d k,
\]
where $d= - \frac {\ln \overline \eta}{\ln 2 }\left ( 3+ \frac{m+1}{2} \right)+1$ and $\eta_0$ is a constant depending only on $m,n$ and $\theta$. Therefore, choosing suitable constants $b$ and $\delta$ gives
\[
u \ge \eta_0 \gamma^d k \quad \text{almost everywhere in } B(x_0,2\rho)
\]
for almost every
\[
t\in \left( \tilde t + \frac 12 \frac{b^{m-1}}{(\eta_0 \gamma^d k )^{m-1}} \delta \rho^2,  \tilde t + \frac{b^{m-1}}{(\eta_0 \gamma^d k )^{m-1}} \delta \rho^2 \right), 
\]
thus concluding the proof.
\end{proof}

\section{The cold alternative}
We will show, that if the supersolution $u$ is large only in a small portion of the ball $B(x_0,\rho)$ at every time level $t\in(0,\rho^2)$, then $u$ is bounded away from zero after some waiting time, provided that the integral average of $u$ over the ball is large enough at time $t=0$. The strategy of the proof is the following. We will use a qualitative version of a reverse H\"older's inequality to show, that the $L^q$-norms of supersolutions over cylinders of radius $\frac 34 \rho$ are uniformly bounded. Then, using the Caccioppoli estimates together with the boundedness of $L^q$-norms, we show that the $L^1$-norms of $\nabla u^m$ are bounded as well, thus giving us a uniform lower bound for the integral averages of $u$ over a smaller ball $B(x_0,\frac 58 \rho)$, provided that the integral average at $t=0$ is large enough. Finally, we use a real analytic lemma (Lemma \ref{simple measure lemma}) to find a time level $\tau\in\left(0,\left(\frac 58\rho\right)^2\right)$, such that 
\[
\left|\left\{x\in B\left(x_0,\frac 58\rho\right): u(x,\tau)>C_1\right\}\right| \ge C_2 \left|B\left(x_0,\frac 58\rho\right)\right|,
\]
 and thus the boundedness from below follows from Lemma \ref{expansion of positivity}.

First, we will prove a qualitative version of a reverse H\"older's inequality for the weak supersolutions of the porous medium equation.
\begin{lemma}\label{weak RHI}
Let $u$ be a weak supersolution in a neighbourhood of $B(x_0,\rho)\times(0,\rho^2)$, such that $u>0$. Let $q\in (m-1,m+\frac 2 n)$ and let $s$ be defined as 
\[
s=(m-1)+\left ( 1+ \frac 2 n \right )^{-(N+1)}(q-(m-1)),
\]
where $N\in \n$. If
\[
\fint_0^{\rho^2} \fint_{B(x_0,\rho)} u^s \dx x \dx t \le \widetilde{C},
\]
for some $\widetilde{C}$, then 
\[
\fint_0^{(\alpha \rho)^2} \fint_{B(x_0,\alpha \rho)} u^q \dx x \dx t \le C
\]
for every $\alpha\in \left(\frac 12,1\right)$. Here $C=C(m, n, q, N, \widetilde{C}, \alpha)$.  

\end{lemma}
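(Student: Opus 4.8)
The plan is to run a Moser-type iteration on the integrability exponent of $u$, where each step gains a fixed factor through the parabolic Sobolev inequality, the energy being furnished by the Caccioppoli estimate of Lemma~\ref{caccioppoli2}. One step will improve the \emph{shifted} exponent $p-(m-1)$ by the factor $1+\tfrac2n$, and the definition of $s$ places exactly $N+1$ such factors between $s$ and $q$, so the iteration stops after $N+1$ steps. Accordingly, set
\[
p_j=(m-1)+\Big(1+\tfrac2n\Big)^{\,j-(N+1)}\big(q-(m-1)\big),\qquad j=0,1,\dots,N+1,
\]
so that $p_0=s$, $p_{N+1}=q$ and $p_{j+1}-(m-1)=(1+\tfrac2n)\big(p_j-(m-1)\big)$. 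Fix also a finite decreasing chain of radii $\rho=\rho_0>\rho_1>\dots>\rho_{N+1}=\alpha\rho$ with $\rho_j-\rho_{j+1}=\tfrac{1-\alpha}{N+1}\rho$, cylinders $Q_j=B(x_0,\rho_j)\times(0,\rho_j^2)$, and cut-offs $\zeta_j$ that equal $1$ on $Q_{j+1}$, are supported in $Q_j$, and satisfy $|\nabla\zeta_j|\le C(N,\alpha)\rho^{-1}$, $|\partial_t\zeta_j|\le C(N,\alpha)\rho^{-2}$ (admissible by the Remark following Lemma~\ref{caccioppoli2}, since these cut-offs may vanish only on the parabolic boundary). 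I will prove by induction that $\iint_{Q_j}u^{p_j}\dx x\dx t\le M_j\,\rho^{n+2}$ with $M_j$ depending only on $m,n,q,N,\widetilde C$ and $\alpha$; the hypothesis gives the base case $M_0\le C(n)\,\widetilde C$, and the case $j=N+1$, divided by $|Q_{N+1}|$, is the assertion.

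For the inductive step, apply Lemma~\ref{caccioppoli2} with $\eps=\eps_j:=m-p_j$ and the cut-off $\zeta_j$. The key admissibility point is that $\eps_j\in(0,1)$ for every $j=0,\dots,N$: indeed $p_j>m-1$ is immediate, while $p_j<m$ is equivalent to $p_j-(m-1)<1$, and for $j\le N$ this follows from $p_N-(m-1)=\big(q-(m-1)\big)/(1+\tfrac2n)<1$, i.e.\ from the hypothesis $q<m+\tfrac2n$; moreover $\eps_j\ne1$ since $p_j\ne m-1$. With this choice $m-\eps_j=p_j$ and $1-\eps_j=p_j-(m-1)=:q_j\in(0,p_j]$, so---absorbing the bounded $\eps_j$-dependent constants, using $|\nabla\zeta_j|^2,|\zeta_j\partial_t\zeta_j|\lesssim\rho^{-2}$ and the elementary inequality $u^{q_j}\le1+u^{p_j}$---Lemma~\ref{caccioppoli2} gives
\[
\iint_{Q_j}u^{p_j-2}|\nabla u|^2\zeta_j^2\dx x\dx t+\esssup_{t}\int u^{q_j}\zeta_j^2\dx x\ \le\ \frac{C}{\rho^{2}}\Big(\rho^{n+2}+\iint_{Q_j}u^{p_j}\dx x\dx t\Big)=:\mathcal E_j.
\]
Apply the parabolic Sobolev inequality \cite[Proposition~3.1]{dibenedetto} to $v=u^{p_j/2}\zeta_j$, with time-supremum exponent $r_j=2-\tfrac{2(m-1)}{p_j}$. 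Then $\iint|\nabla v|^2\lesssim\mathcal E_j$, and---at the cost of a slightly fatter cut-off, since $r_j<2$---$\esssup_t\int|v|^{r_j}=\esssup_t\int u^{q_j}\zeta_j^{r_j}\lesssim\mathcal E_j$, so the inequality controls $\iint_{Q_{j+1}}u^{\kappa_j}$ with
\[
\kappa_j=\tfrac{p_j}{2}\Big(2+\tfrac{2r_j}{n}\Big)=p_j+\tfrac2n\big(p_j-(m-1)\big)=p_{j+1},
\]
exactly the target exponent. Hence $\iint_{Q_{j+1}}u^{p_{j+1}}\dx x\dx t\le C\,\mathcal E_j^{\,1+\frac2n}\le C\big(\rho^{n}+\rho^{-2}\iint_{Q_j}u^{p_j}\dx x\dx t\big)^{1+\frac2n}$. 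If $\iint_{Q_j}u^{p_j}\le M_j\rho^{n+2}$, the powers of $\rho$ balance and the right-hand side is $\le C(1+M_j)^{1+2/n}\rho^{n+2}$, so $M_{j+1}=C(1+M_j)^{1+2/n}$; after $N+1$ steps this gives the asserted dependence of $C$ on $m,n,q,N,\widetilde C,\alpha$.

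The step I expect to be the real obstacle is the mismatch already advertised in the introduction: Lemma~\ref{caccioppoli2} controls the gradient term at the exponent $u^{m-\eps}=u^{p_j}$ but the time-supremum term only at the strictly smaller exponent $u^{1-\eps}=u^{p_j-(m-1)}$, so the clean $L^2$-based Moser scheme available for $p$-Laplace type equations cannot be used verbatim. The remedy---iterating on $v=u^{p_j/2}\zeta_j$ with the adapted interpolation exponent $r_j=2-\tfrac{2(m-1)}{p_j}$---forces one to verify two non-obvious facts: that $\eps_j$ stays in $(0,1)$ throughout (so Lemma~\ref{caccioppoli2} applies), and that $r_j$ stays in the admissible range of the parabolic embedding (for $m$ not close to $1$ one has $r_j<1$, so one must invoke \cite[Proposition~3.1]{dibenedetto} in, or first reduce to, the range where the multiplicative Gagliardo--Nirenberg inequality with an $L^{r}$ time-supremum norm is valid). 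Both are governed precisely by the hypothesis $q\in\big(m-1,m+\tfrac2n\big)$, and the definition of $s$ is calibrated so that exactly $N+1$ gain steps carry $s$ to $q$. The remaining points---bookkeeping of the $\eps_j$- and radius-dependent constants over the finitely many iterations, and justifying finiteness of the intermediate integrals (via the finiteness clause already used in the proof of Lemma~\ref{caccioppoli2})---are routine.
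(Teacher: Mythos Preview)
Your approach is correct and essentially the same as the paper's: both run a finite Moser-type iteration with $N+1$ steps, using Lemma~\ref{caccioppoli2} with $\eps_j=m-p_j$ (your $p_j$ is the paper's $\delta_j=m-\eps_j$, and your verification that $\eps_j\in(0,1)$ for $j\le N$ is identical to the paper's), then feeding the gradient and time-supremum bounds into the parabolic Sobolev inequality to gain the factor $1+\tfrac2n$ on the shifted exponent.

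The one genuine difference is how the cut-off mismatch is resolved. You take $v=u^{p_j/2}\zeta_j$, so that $v^{r_j}=u^{q_j}\zeta_j^{r_j}$ with $r_j<2$; since $\zeta_j^{r_j}\ge\zeta_j^2$ this is \emph{not} directly controlled by the $\esssup_t\int u^{q_j}\zeta_j^2$ term from Lemma~\ref{caccioppoli2}, and you patch this with an auxiliary ``fatter'' cut-off and an extra intermediate radius. This works, but the paper's device is cleaner: it takes $v=u^{a/2}\zeta^{b/2}$ with $b=\tfrac{2(m-\eps)}{1-\eps}=\tfrac{2}{r_j}$, so that $v^{r_j}=u^{1-\eps}\zeta^2$ matches Lemma~\ref{caccioppoli2} on the nose, and since $b/2>1$ the gradient term $|\nabla v|^2$ is still dominated by the same Caccioppoli quantities. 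Also, your worry about $r_j<1$ is unnecessary: DiBenedetto's Proposition~3.1 is stated for any positive time-supremum exponent, and $r_j=\tfrac{2(1-\eps_j)}{m-\eps_j}>0$ is all that is needed. Finally, the paper handles the lower-order term $\iint u^{1-\eps}|\zeta_t|$ via H\"older ($\Lambda_j^\sigma$ with $\sigma=\tfrac{1-\eps}{m-\eps}$) rather than your Young-type bound $u^{q_j}\le 1+u^{p_j}$; either works for a finite iteration.
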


\begin{proof}
  Fix $\alpha\in \left (\frac 12 , 1 \right )$ and $N\in \n$. Define 
\[
r_j = \rho - (1-\alpha)\rho\frac{1-2^{-j}}{1-2^{-(N+1)}}.
\]
Then $r_0=\rho$ and $r_{N+1}=\alpha \rho$. Denote $B^j=B(x_0,r_j)$ and $U^j=B^j\times (0,r_j^2)$. For fixed $j$, let $\zeta$ be a smooth cut-off function, such that $0\le\zeta \le 1$ and 
\begin{align*}
 & \zeta = 1 \quad\text{in } U^{j+1},\\
& \zeta = 0 \quad \text{on } \bd^p U^j,\\
&|\nabla \zeta| \le \frac C {r_j-r_{j+1}} \le \frac {C 2^{j+1}}{(1-\alpha)\rho} \quad \text{and}\\
& |\zeta_t| \le \frac C {r_j^2 - r_{j+1}^2} \le \frac {C 2^{2(j+1)}}{((1-\alpha)\rho)^2}.
\end{align*}
In order to utilize the Caccioppoli estimates, we choose 
\begin{align*}
&a=m-\eps,\\ 
&\kappa=1+ \frac {2(1-\eps)}{n(m-\eps)} \quad \text{and}\\
&b = 2 \frac{m-\eps}{1-\eps},
 \end{align*}
where $\eps\in (0,1)$, and use the parabolic Sobolev's inequality \cite[Proposition 3.1]{dibenedetto} with $q=2\kappa$, $p=2$ and $m=n(\kappa-1)$ to get the estimate
\begin{align*}
  &\fint_0^{r_{j+1}^2}\fint_{B^{j+1}} u^{\kappa a} \dx x \dx t = \fint_0^{r_{j+1}^2}\fint_{B^{j+1}} \big(u^{a/2}\zeta^{b/2}\big)^{2\kappa} \dx x \dx t \\
& = \frac C {r_{j+1}^{n+2}} \iint_{U^{j+1}} \big(u^{a/2}\zeta^{b/2}\big)^{2\kappa} \dx x \dx t 
\le \frac {C 2^{n+2}} {r_{j}^{n+2}} \iint_{U^j}  \big(u^{a/2}\zeta^{b/2} \big)^{2\kappa} \dx x \dx t\\
&\le \frac {C 2^{n+2}} {r_{j}^{n+2}} \iint_{U^j} |\nabla \big(u^{a/2}\zeta^{b/2} \big) |^2 \dx x \dx t \left ( \esssup_{t\in(0,r_j^2)} \int_{B^j} \big(u^{a/2}\zeta^{b/2} \big)^{n(\kappa-1)} \dx x \right)^{2/n}.
\end{align*}
In the previous inequality, we may bypass the boundedness assumption in the parabolic Sobolev's inequality by considering $\min\{u,k\}$ and using the monotone convergence theorem to pass to the limit. By the choice of $a$ and $b$, we get the estimate
\begin{align*}
&|\nabla \big(u^{a/2}\zeta^{b/2} \big) |^2  \\
&\le C \left( \left(\frac{m-\eps}{2}\right)^2 u^{m-\eps-2} \zeta^{2 \left( \frac{m-\eps}{1-\eps}\right)}|\nabla u|^2 + \left(\frac{m-\eps}{1-\eps}\right)^2 u^{m-\eps} |\nabla \zeta|^2 \right )\\
&\le C \left( u^{m-\eps-2} \zeta^2 |\nabla u|^2 + \frac{1}{(1-\eps)^2} u^{m-\eps} |\nabla \zeta|^2 \right ).
\end{align*}
Here we used the fact that $\eps \in (0,1)$. Thus, by Lemma \ref{caccioppoli2}, we get
\begin{align*}
 & \iint_{U^j} |\nabla \big(u^{a/2}\zeta^{b/2} \big) |^2 \dx x \dx t \\
&\le C  \iint_{U^j} \left ( u^{m-\eps-2}\zeta^2 |\nabla u|^2  +  \frac 1 {(1-\eps)^2} u^{m-\eps} |\nabla \zeta|^2 \right) \dx x \dx t \\
&\le \frac C {|\eps|^2 (1-\eps)^2 } \left (\iint_{U^j} u^{m-\eps} |\nabla \zeta|^2 \dx x \dx t + \iint_{U^j} u^{1-\eps}  |\zeta_t| \dx x \dx t \right).
\end{align*}
Again, by the choice of $a$, $b$ and $\kappa$, we have $u^{\frac a 2 n(\kappa-1)}\zeta^{\frac b 2 n(\kappa-1)}= u^{1-\eps}\zeta^2$ and thus by Lemma \ref{caccioppoli2}, we get
\begin{align*}
 & \left( \esssup_{t\in (0,r_j^2) } \int_{B^j} \big(u^{a/2}\zeta^{b/2} \big)^{n(\kappa-1)} \dx x \right)^{2/n} \\
&\le \left(\frac C {|\eps|^2 (1-\eps)^2 } \left (\iint_{U^j} u^{m-\eps} |\nabla \zeta|^2 \dx x \dx t + \iint_{U^j} u^{1-\eps}  |\zeta_t| \dx x \dx t \right)\right)^{2/n}.
\end{align*}
So far we have 
\begin{align*}
  &\fint_0^{r_{j+1}^2}\fint_{B^{j+1}} u^{\kappa a} \dx x \dx t \le \\
&\frac {C}{r_j^{n+2}} \left(\frac 1{ \eps^2 (1-\eps)^2 }  \iint_{U^j} u^{m-\eps} |\nabla \zeta|^2 \dx x \dx t + \iint_{U^j} u^{1-\eps} |\zeta_t| \dx x \dx t \right )^{1+2/n} \le\\
&\left ( \frac C {\eps^2 (1-\eps)^2} \frac {2^{2(j+1)}}{(1-\alpha)^2} \left( \fint_{0}^{r_j^2}\fint_{B^j}  u^{m-\eps} \dx x \dx t  +  \fint_{0}^{r_j^2}\fint_{B^j} u^{1-\eps} \dx x \dx t \right )\right) ^{1+2/n} \\
\end{align*}
We denote $\sigma=\frac{1-\eps}{m-\eps}$. Now, H\"older's inequality gives us 
\begin{align}\label{almost iteration}
  &\fint_0^{r_{j+1}^2}\fint_{B^{j+1}} u^{m+\frac 2 n - \left (1+ \frac 2n \right)\eps} \dx x \dx t \le \nonumber\\
& \left ( \frac C {\eps^2 (1-\eps)^2} \frac {2^{2(j+1)}}{(1-\alpha)^2} \left( \fint_{0}^{r_j^2}\fint_{B^j}  u^{m-\eps} \dx x \dx t  +  \left (\fint_{0}^{r_j^2}\fint_{B^j} u^{m-\eps} \dx x \dx t \right)^\sigma\right )\right) ^{1+2/n}. \nonumber\\
\end{align}
Next, we denote $\gamma=1+\frac 2 n$ and $\eps_0=1-\gamma^{-(N+1)}(q-(m-1))$. Let $\eps_j = 1-\gamma^j(1-\eps_0)$ and $\delta_j=m-\eps_j$. Now we have
\begin{align*}
  &\delta_{N+1}= m-\eps_{N+1}= m+ \gamma^{N+1}(\gamma^{-(N+1)}(q-(m-1))-1 = q \quad \text{and}\\
&\delta_0 = m-\eps_0 = m-1+ \left(1+\frac 2n\right)^{-(N+1)}(q-(m-1))=s.
\end{align*}
We observe
\begin{align*}
  &m+ \frac 2n - \gamma \eps_j = m+ \frac 2n + \gamma^{j+1}(1-\eps_0)-\gamma = m  + \gamma^{j+1}(1-\eps_0)-1 \\
&=m - \eps_{j+1}=\delta_{j+1}.
\end{align*}
Thus, denoting 
\[
\Lambda_j=\fint_{0}^{r_j^2} \fint_{B^j} u^{\delta_j} \dx x \dx t,
\]
 \eqref{almost iteration} can be written as
\[
\Lambda_{j+1} \le \left ( \frac {C 2^{2j}}{\eps_j^2(1-\eps_j)^2 (1-\alpha)^2}(\Lambda_j +  \Lambda_j^\sigma) \right )^\gamma.
\]
In order to estimate the term $\frac{1}{\eps_j^2(1-\eps_j)^2}$, we want to show that $\eps_j$ is an decreasing sequence, and that $\eps_N>0$. Consider the difference 
\[
\eps_{j+1}-\eps_j = (1-\eps_0)\gamma^j(1-\gamma) = (\eps_0-1)\gamma^j\frac 2 n.
\]
By assumption $q>(m-1)$, implying that $\eps_0<1$. Hence the sequence $\eps_j$ is decreasing. On the other hand
\[
\eps_N = 1-\gamma^{-1}(q-(m-1))>0,
\]
because $q< m+\frac 2 n = m-1+\gamma$, by assumption. We now have the estimate
\[
\frac{1}{\eps_j^2(1-\eps_j)^2} \le \frac{1}{\eps_N^2(1-\eps_0)^2}= \frac {\gamma^2}{(m+\frac 2n -q)^2 (s-(m-1))^2}.
\]
We denote $\lambda= \frac{1}{(m+\frac 2n -q) (s-(m-1))}$. Thus 
\begin{equation}\label{iteration}
 \Lambda_{j+1} \le \left ( \frac {C 2^{2j}\lambda^2}{(1-\alpha)^2}(\Lambda_j+\Lambda_j^\sigma )\right )^\gamma.
\end{equation}
By assumption
\[
\Lambda_0=\fint_0^{\rho^2} \fint_{B(x_0,\rho)} u^s \dx x \dx t \le \widetilde{C}.
\]
Thus, iterating \eqref{iteration} $N+1$ times gives
\[
\fint_0^{(\alpha\rho)^2} \fint_{B(x_0,\alpha\rho)} u^q \dx x \dx t= \Lambda_{N+1} \le C.
\]
\end{proof}

Next, we will show, that the $L^q$-norms of these supersolutions are uniformly bounded.

\begin{lemma}\label{uniform Lq bound}
  Let $u$ be a weak supersolution in $\Omega_{T_0}$, such that
\[
|\{x\in B(x_0,\rho): u(x,t)>k^{1+\frac 1d}\}| \le  k^{-\frac 1d} |B(x_0,\rho)|,
\]
for every $k>1$ and for almost every $t\in (0,\rho^2).$ Here $d$ is as in Lemma \ref{expansion of positivity}. Then for $q\in \left(m-1,m+\frac 2 n\right)$ we have
\[
\fint_0^{\left(\frac 34\rho\right)^2} \fint_{B\left(x_0,\frac 34\rho\right)} u^q \dx x \dx t \le C.
\]
\end{lemma}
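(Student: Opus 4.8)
The plan is to obtain this estimate from the self-improving reverse H\"older inequality of Lemma~\ref{weak RHI}, so that the whole argument reduces to verifying the hypothesis of that lemma for a suitably low exponent. Concretely, I would fix $q\in\left(m-1,m+\frac 2n\right)$, leave a parameter $N\in\n$ to be chosen later, set $s=(m-1)+\left(1+\frac 2n\right)^{-(N+1)}(q-(m-1))$ as prescribed in Lemma~\ref{weak RHI}, and aim to produce a constant $\widetilde C=\widetilde C(m,n,q,N,d)$, \emph{independent of $u$}, with
\[
\fint_0^{\rho^2}\fint_{B(x_0,\rho)} u^s \dx x \dx t \le \widetilde C .
\]
The fact that this preliminary bound does not depend on $u$ itself, only on the structural assumption, is exactly the point that makes the resulting $L^q$ estimate uniform.

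To obtain the $u^s$-bound I would fix a time level $t\in(0,\rho^2)$ at which the distributional assumption holds and apply the layer cake formula,
\[
\fint_{B(x_0,\rho)} u(x,t)^s \dx x = \int_0^\infty s\lambda^{s-1}\,\frac{|\{x\in B(x_0,\rho): u(x,t)>\lambda\}|}{|B(x_0,\rho)|}\dx \lambda .
\]
On $\{0<\lambda\le 1\}$ the density is at most $1$, contributing $1$ to the integral. On $\{\lambda>1\}$ I would substitute $\lambda=k^{1+1/d}$ with $k>1$, so that the hypothesis gives density at most $k^{-1/d}$; the change of variables turns the tail into a constant multiple of $\int_1^\infty k^{(1+1/d)(s-1)}\dx k$, which is finite precisely when $s<\frac 1{d+1}$. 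Since $s\to m-1$ as $N\to\infty$, choosing $N$ large enough forces $s<\frac1{d+1}$, and then $\fint_{B(x_0,\rho)} u(x,t)^s\dx x\le\widetilde C$ for almost every $t$, with $\widetilde C$ depending only on $m,n,q,N,d$. Integrating over $t\in(0,\rho^2)$ gives the space--time bound displayed above.

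With the hypothesis of Lemma~\ref{weak RHI} checked for this $s$ and $N$, I would apply that lemma with $\alpha=\frac 34\in\left(\frac 12,1\right)$ and the given $q$, obtaining
\[
\fint_0^{\left(\frac 34\rho\right)^2}\fint_{B\left(x_0,\frac 34\rho\right)} u^q \dx x \dx t \le C(m,n,q,N,d),
\]
which is the assertion. I expect the main obstacle to be the exponent bookkeeping: one must make sure that the admissible window $\left(m-1,\frac1{d+1}\right)$ for the entry exponent $s$ of the reverse H\"older iteration is non-empty and can be hit by an $s$ of the prescribed form $s=(m-1)+\left(1+\frac 2n\right)^{-(N+1)}(q-(m-1))$, which is exactly where the precise value of $d$ from Lemma~\ref{expansion of positivity} and the freedom to enlarge $N$ are used; the remaining steps are a routine change of variables in the layer cake integral and a direct invocation of the already-established Lemma~\ref{weak RHI}.
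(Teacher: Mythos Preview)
Your layer cake computation is correct: the tail $\int_1^\infty k^{(1+1/d)(s-1)}\dx k$ converges precisely when $s<\frac{1}{d+1}$. The gap is that this window is incompatible with the entry requirement of Lemma~\ref{weak RHI}. By its very definition, the exponent $s=(m-1)+\left(1+\frac 2n\right)^{-(N+1)}(q-(m-1))$ always satisfies $s>m-1$, so your argument needs $m-1<\frac{1}{d+1}$. Since $d>1$ in Lemma~\ref{expansion of positivity} (and in fact the paper remarks that $d$ may be taken as large as one wishes), this forces $m<\frac32$; for $m\ge 2$ the window $(m-1,\frac{1}{d+1})$ is empty regardless of how $d$ is tuned. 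Enlarging $N$ drives $s$ down toward $m-1$ but never below it, so it cannot rescue the convergence of the tail integral.

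The paper closes exactly this gap by inserting an intermediate bootstrap between the layer cake step and the invocation of Lemma~\ref{weak RHI}. One first uses the distributional hypothesis only to get a \emph{uniform-in-time} bound $\fint_{B(x_0,\rho)}u^\delta\dx x\le C$ for the small exponent $\delta=\frac{1}{2d+2}$. This supremum bound is then fed into one round of the parabolic Sobolev inequality combined with the Caccioppoli estimate of Lemma~\ref{caccioppoli2} (with $\eps=1-\delta$), producing an inequality of the form
\[
\fint_{U(s)} u^{m-1+\delta(1+2/n)}\dx x\dx t \le C\rho^2 \fint_{U(S)} u^{m-1+\delta}|\nabla\zeta|^2\dx x\dx t + C.
\]
A Young inequality and a standard absorption iteration (as in \cite[Lemma~8.15]{giaquinta}) then yield a genuine bound on $\fint u^{m-1+\delta(1+2/n)}$ over $B(x_0,\frac78\rho)\times(0,(\frac78\rho)^2)$. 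Only at this point, with an exponent safely above $m-1$, is Lemma~\ref{weak RHI} applied. Your plan is missing this Caccioppoli/Sobolev step that lifts the exponent across the threshold $m-1$.
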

\begin{proof}
  Let $\delta=\frac{1}{2d+2}$ and let $t\in(0,\rho^2)$. Then, by applying Cavalieri's principle at the time level $t$, we have
  \begin{align*}
    &\int_{B(x_0,\rho)} u^\delta \dx x = \delta \int_0^\infty \lambda^{\delta-1} |\{x\in B(x_0,\rho): u(x,t)>\lambda\}| \dx \lambda \\
&= C \int_0^\infty k^{\left(1+\frac 1d\right)(\delta-1)}|\{x\in B(x_0,\rho): u(x,t)>k^{1+\frac 1d}\}| k^{\frac 1d} \dx k\\
&\le C\left(|B(x_0,\rho)|+ \int_1^\infty k^{\left(1+\frac 1d\right)(\delta-1)}|\{x\in B(x_0,\rho): u(x,t)>k^{1+\frac 1d}\}| k^{\frac 1d} \dx k \right)\\
&\le C |B(x_0,\rho)| \left( 1+\int_1^\infty k^{-\left (1+\frac 1{2d}\right)} \dx k \right) \le C |B(x_0,\rho)|.
  \end{align*}
Here we used the assumption $|\{x\in B(x_0,\rho): u(x,t)>k^{1+\frac 1d}\}| k^{\frac 1d}\le |B(x_0,\rho)|$. Thus 
\begin{equation}\label{udelta}
\fint_{B(x_0,\rho)} u^\delta \dx x \le C
\end{equation}
for almost every $t\in(0,\rho^2)$. Denote $U(s)=B(x_0,s)\times (0,s^2),$ for $s\in \left ( \frac 78 \rho,\rho\right)$. Let $\frac 78 \rho\le s < S\le  \rho$ and take a smooth cut-off function $\zeta \in C_0^\infty(B(0,S))$, such that $0\le \zeta \le 1$ and
\begin{align*}
 & \zeta=1 \quad \text{in } U(s) \quad \text{and}\\
&|\nabla \zeta| \le \frac C {S-s}.
\end{align*}
As in the proof of Lemma \ref{weak RHI}, we want to use the parabolic Sobolev's inequality and Caccioppoli estimates. We choose 
\begin{align*}
  &a=m-1+\delta,\\
&\kappa=1+\frac{2\delta}{n(m-1+\delta)} \quad \text{and }\\
&b=2
\end{align*}
and thus we obtain
\begin{align*}
&\iint_{U(s)} u^{\kappa a} \dx x \dx t\\
& \le \iint_{U(S)} |\nabla (u^{\frac a 2}\zeta^{\frac b 2 } )|^2 \dx x \dx t \left ( \esssup_{t\in (0,S^2)} \int_{B(x_0,S)}|u^{\frac a 2} \zeta^{\frac b 2 }|^{(\kappa-1)n}\dx x \right)^{2/n}.
\end{align*}
We observe $\kappa a= m-1+\delta(1+\frac 2 n)$ and $|u^{\frac a 2} \zeta^{\frac b 2 }|^{(\kappa-1)n} = u^\delta \zeta^{\frac{2\delta}{m-1+\delta}}$. Moreover, we may estimate
\[
|\nabla (u^{\frac a 2}\zeta^{\frac b 2 } )|^2 \le C(u^{m-3+\delta} \zeta^2 |\nabla u|^2 + u^{m-1+\delta}|\nabla \zeta|^2 ).
\]
Combining these estimates, we have
\begin{align*}
  \iint_{U(s)} u^{m-1+\delta(1+\frac 2 n)} \dx x \dx t
&\le C \iint_{U(S)} \Big(u^{m-3+\delta}\zeta^2 |\nabla u|^2 + u^{m-1+\delta} |\nabla \zeta|^2 \Big) \dx x \dx t \\ &\times\left ( \esssup_{t\in(0,S^2)} \int_{B(x_0,S)} u^\delta \zeta^{\frac{2\delta} {m-1+\delta}} \dx x \right)^{2/n}.
\end{align*}
Since $\zeta\le 1$, we may use \eqref{udelta} to get
\[
\esssup_{t\in(0,S^2)} \int_{B(x_0,S)} u^\delta \zeta^{\frac{2\delta} {m-1+\delta}} \dx x \le \esssup_{t\in(0,\rho^2)} \int_{B(x_0,\rho)} u^\delta  \dx x \le C \rho^n.
\]
We may set $\eps=1-\delta$ and use Lemma \ref{caccioppoli2} to get the estimate
\begin{align*}
 & \iint_{U(S)}u^{m-\eps-2}\zeta^2|\nabla u|^2 \dx x \dx t + \iint_{U(S)} u^{m-\eps}|\nabla \zeta|^2 \dx x \dx t \\
&\le C\iint_{U(S)} u^{m-\eps} |\nabla \zeta|^2 \dx x \dx t+ \int_{B(x_0,S)} u^{1-\eps}(x,S^2) \zeta(x)^2 \dx x\\
&\le C\iint_{U(S)} u^{m-\eps} |\nabla \zeta|^2 \dx x \dx t + C \rho^n.
\end{align*}
Here we used the fact that $\zeta_t=0$ and \eqref{udelta}. We have
\[
 \fint_{0}^{s^2} \fint_{B(x_0,s)} u^{m-1+\delta\left(1+\frac 2 n\right)} \dx x \dx t \le C \rho^2\fint_{0}^{S^2} \fint_{B(x_0,S)} u^{m-1+\delta} |\nabla \zeta|^2 \dx x \dx t+C.
\]
Next, we will use Young's inequality with 
\begin{align*}
&  p=\frac{m-1+\delta\left(1+\frac 2 n \right)}{m-1+\delta} \quad \text{and}\\
& q=\frac {n(m-1+\delta\left(1+\frac 2 n\right)}{2\delta}.
\end{align*}
to get
\begin{align*}
 & C\rho^2\fint_{0}^{S^2} \fint_{B(x_0,S)} u^{m-1+\delta} |\nabla \zeta|^2 \dx x \dx t \\
&\le C_1 \fint_{0}^{S^2} \fint_{B(x_0,S)} u^{(m-1+\delta)p}\dx x \dx t + C_2 \fint_{0}^{S^2} \fint_{B(x_0,S)} \rho^{2q}|\nabla \zeta|^{2q}\dx x \dx t \\
&\le C_1 \fint_{0}^{S^2} \fint_{B(x_0,S)} u^{m-1+\delta\left(1+\frac 2 n\right)}\dx x \dx t + C \left(\frac \rho {S-s}\right)^{n(m-1+\delta(1+2/n))/\delta}. 
\end{align*}
We choose the constants in such a way, that $C_1<1$ and thus $C$ is determined accordingly. We denote 
\begin{align*}
&\phi(s)=\fint_{0}^{s^2} \fint_{B(x_0,s)} u^{m-1+\delta\left(1+\frac 2 n\right)} \dx x \dx t \quad \text{and}\\
&\sigma={n(m-1+\delta(1+2/n))/\delta}.
\end{align*}
Thus we have 
\[
\phi(s)\le C\rho^\sigma(S-s)^{-\sigma} + C_1 \phi(S).
\]
Now by \cite[Lemma 8.15]{giaquinta}
\[
\fint_{0}^{s^2} \fint_{B(x_0,s)} u^{m-1+\delta(1+\frac 2 n)} \dx x \dx t  \le C\left(\frac \rho {S-s} \right)^{\sigma}.
\]
Choosing $s=\frac 78 \rho$ and $S=\rho$ shows that
\[
\fint_0^{\left(\frac 78 \rho \right )^2} \fint_{B\left( x_0, \frac 78 \rho \right)} u^{m-1+\delta(1+\frac 2 n)} \dx x \dx t  \le C.
\]
 Let $q\in \left( m-1, m+\frac 2 n \right)$. Note, that the constant $d$ in Lemma \ref{expansion of positivity} can be chosen to be as large as we please. Thus we may choose $d$ in such a way, that 
\[
\delta=\left(1+\frac 2n\right)^{-(N+1)}(q-(m-1)), \quad \text{for some } N.
\]
Now, applying Lemma \ref{weak RHI} in $B\left( x_0, \frac 78 \rho \right)\times \left (0, \left(\frac 78 \rho\right)^2\right)$, with $\alpha=\frac 67$ concludes the proof. 
\end{proof}
Next we will show the boundedness of the $L^1$-norms of the gradients $\nabla u^m$.

\begin{lemma}\label{uniform gradient bound}
  Let $u$ be a weak supersolution in $\Omega_{T_0}$, such that $u>0$. Suppose, that
\[
|\{x\in B(x_0,\rho): u(x,t)>k^{1+\frac 1d}\}| \le  k^{-\frac 1d} |B(x_0,\rho)|,
\]
for every $k>1$ and for almost every $t\in (0,\rho^2).$ Then
\[
\fint_0^{\left( \frac 58 \rho\right)^2} \fint_{B\left (x_0,\frac 58\rho \right)} |\nabla u^m| \dx x \dx t \le \frac C \rho.
\]  
\end{lemma}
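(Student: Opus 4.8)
The plan is to estimate $\nabla u^m=m\,u^{m-1}\nabla u$ by splitting it with the Cauchy--Schwarz inequality, controlling the ``gradient part'' through the Caccioppoli estimate of Lemma~\ref{caccioppoli2} and the ``zero order part'' through the uniform $L^q$-bound of Lemma~\ref{uniform Lq bound}.

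First I would set up notation and fix the exponent. Write $Q=B(x_0,\tfrac58\rho)\times(0,(\tfrac58\rho)^2)$ and $Q'=B(x_0,\tfrac34\rho)\times(0,(\tfrac34\rho)^2)$, so that $|Q|\simeq|Q'|\simeq\rho^{n+2}$. Fix $\eps\in(0,\min\{1,\tfrac2n\})$ and put $q=m+\eps$; then $q\in(m-1,m+\tfrac2n)$, so Lemma~\ref{uniform Lq bound} applies and gives $\fint_0^{(3\rho/4)^2}\fint_{B(x_0,3\rho/4)}u^{q}\,dx\,dt\le C$. Since the normalised Lebesgue measure on $Q'$ is a probability measure and $m-\eps\le q$, $1-\eps\le q$, Jensen's inequality yields in addition
\[
\iint_{Q'}u^{m-\eps}\,dx\,dt+\iint_{Q'}u^{1-\eps}\,dx\,dt\le C\,|Q'|\le C\rho^{n+2},
\]
and likewise $\iint_{Q'}u^{m+\eps}\,dx\,dt\le C\rho^{n+2}$.

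Next I would choose a cut-off $\zeta$ with $0\le\zeta\le1$, $\zeta\equiv1$ on $Q$, $\zeta\equiv0$ on the conjugate parabolic boundary $\bd^p Q'$ (so $\zeta$ vanishes near $t=(\tfrac34\rho)^2$ and near $\bd B(x_0,\tfrac34\rho)$, but may be nonzero at $t=0$), $|\nabla\zeta|\le C/\rho$ and $|\zeta_t|\le C/\rho^2$. By the remark following Lemma~\ref{caccioppoli2}, inequality~\eqref{almost caccioppoli2} is available for such $\zeta$; since $\eps<1$, in~\eqref{almost caccioppoli2} the boundary term at $\tau_1$ carries the favourable sign (hence can be dropped) while the one at $\tau_2$ disappears as $\tau_2\to(\tfrac34\rho)^2$ because $\zeta$ vanishes there. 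Lemma~\ref{caccioppoli2} then gives, using $|\nabla\zeta|^2,|\zeta_t|\le C/\rho^2$ and the bounds above,
\[
\iint_{Q}u^{m-\eps-2}|\nabla u|^2\,dx\,dt
\le\frac{C}{\rho^2}\iint_{Q'}\bigl(u^{m-\eps}+u^{1-\eps}\bigr)\,dx\,dt
\le C\rho^{n}.
\]
Finally, writing $u^{m-1}|\nabla u|=\bigl(u^{(m-\eps-2)/2}|\nabla u|\bigr)\,u^{(m+\eps)/2}$ and applying the Cauchy--Schwarz inequality on $Q$,
\[
\iint_{Q}u^{m-1}|\nabla u|\,dx\,dt
\le\Bigl(\iint_{Q}u^{m-\eps-2}|\nabla u|^2\,dx\,dt\Bigr)^{1/2}
\Bigl(\iint_{Q}u^{m+\eps}\,dx\,dt\Bigr)^{1/2}
\le\bigl(C\rho^{n}\bigr)^{1/2}\bigl(C\rho^{n+2}\bigr)^{1/2}=C\rho^{n+1},
\]
and dividing by $|Q|\simeq\rho^{n+2}$ yields $\fint_0^{(5\rho/8)^2}\fint_{B(x_0,5\rho/8)}|\nabla u^m|\,dx\,dt\le C/\rho$, which is the claim.

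The step I expect to be the main obstacle is the application of Lemma~\ref{caccioppoli2} with a cut-off that is \emph{not} compactly supported in time (it equals $1$ at $t=0$, which is needed so that the final conclusion covers the whole interval $(0,(\tfrac58\rho)^2)$): one must check carefully that in~\eqref{almost caccioppoli2} the term at $t=0$ may be discarded by its sign and that the term at $t=(\tfrac34\rho)^2$ indeed vanishes because $\zeta$ does there. The other, more routine, point is the exponent bookkeeping --- all of $m-\eps$, $1-\eps$ and the target power $m+\eps$ must stay below $m+\tfrac2n$ so that Lemma~\ref{uniform Lq bound} applies, which is precisely what forces the restriction $\eps<\tfrac2n$.
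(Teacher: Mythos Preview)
Your proposal is correct and follows essentially the same route as the paper: split $u^{m-1}|\nabla u|=(u^{(m-\eps-2)/2}|\nabla u|)\,u^{(m+\eps)/2}$, apply Cauchy--Schwarz, control the first factor by the Caccioppoli estimate~\eqref{almost caccioppoli2} with a cut-off vanishing on the conjugate parabolic boundary, and control the second factor through Lemma~\ref{uniform Lq bound}. The paper uses the intermediate cylinder $B(x_0,\tfrac78\rho)\times(0,(\tfrac78\rho)^2)$ rather than your $Q'=B(x_0,\tfrac34\rho)\times(0,(\tfrac34\rho)^2)$, but your choice matches the statement of Lemma~\ref{uniform Lq bound} more cleanly; your identification of the sign of the $\tau_1$-boundary term in~\eqref{almost caccioppoli2} for $\eps<1$ is exactly the point that justifies letting $\zeta$ be nonzero at $t=0$.
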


\begin{proof}
Denote $U^1=B\left( x_0, \frac 58 \rho \right)\times \left (0, \left(\frac 58 \rho\right)^2\right)$ and $U^2=B\left( x_0, \frac 78 \rho \right)\times \left (0, \left(\frac 78 \rho\right)^2\right)$. Take $\eps \in \left( 0, \frac 2 n \right)$ and use H\"older's inequality to get
\begin{align*}
&\frac{1}{|U^1|}\iint_{U^1} |\nabla u^m| \dx x \dx t = \frac{1}{|U^1|}\iint_{U^1} m u^{m-1}|\nabla u| \dx x \dx t \\
&= \frac{1}{|U^1|}\iint_{U^1} m u^{\frac{m-\eps-2}{2}}|\nabla u|u^{\frac{m+\eps}{2}} \dx x \dx t\\
&\le \left(\frac{1}{|U^1|}\iint_{U^1} m u^{m-\eps-2}|\nabla u|^2 \dx x \dx t \right)^{1/2} \left(\frac{1}{|U^1|}\iint_{U^1} u^{m+\eps} \dx x \dx t \right)^{1/2}. 
\end{align*}
Let $\zeta$ be a smooth cut-off function, such that $0\le \zeta \le 1$ and 
\begin{align*}
 & \zeta=1 \quad \text{in } U^1, \\
& \zeta=0 \quad \text{on } \bd^p U^2 ,\\
& |\nabla \zeta| \le \frac C \rho \quad \text{and }\\
&|\zeta_t|\le \frac C {\rho^2}.
\end{align*}
We may use Lemma \ref{caccioppoli2} to control the first term on the right hand side 
\begin{align*}
 &\frac{1}{|U^1|} \iint_{U^1} m u^{m-\eps-2}|\nabla u|^2 \dx x \dx t \\
&\le \frac C {\eps^2(1-\eps)}\left ( \frac{1}{|U^2|}\iint_{U^2} u^{m-\eps} |\nabla \zeta|^2 \dx x \dx t + \frac{1}{|U^2|}\iint_{U^2} u^{1-\eps} \zeta |\zeta_t| \dx x \dx t \right)\\
&\le \frac C {\eps^2(1-\eps) \rho^2}\left ( \frac{1}{|U^2|}\iint_{U^2} u^{m-\eps} \dx x \dx t + \frac{1}{|U^2|}\iint_{U^2} u^{1-\eps} \dx x \dx t \right)\\
&\le \frac C {\eps^2(1-\eps)\rho^2} \frac{1}{|U^2|}\iint_{U^2} u^{m-\eps} \dx x \dx t \\
&+\frac C {\eps^2(1-\eps)\rho^2} \left(\frac{1}{|U^2|}\iint_{U^2} u^{m-\eps} \dx x \dx t\right)^{(1-\eps)/(m-1)}.
\end{align*}
We may assume, that $\eps$ is chosen in such a way, that we have $m-1<m-\eps<m+\eps<m+\frac 2 n$ and thus we may use Lemma \ref{uniform Lq bound} to conclude
\begin{align*}
&\fint_{0}^{\left(\frac 58 \rho\right)^2}\fint_{B\left(x_0,\frac 58\rho\right)} |\nabla u^m| \dx x \dx t \\
&\le\frac C \rho \left( \frac{1}{|U^2|}\iint_{U^2} u^{m-\eps} \dx x \dx t+ \left(\frac 1 {|U^2|} \iint_{U^2} u^{m-\eps} \dx x \dx t \right)^{(1-\eps)/(m-1)}\right)^{1/2}\\
&\times\left(\frac 1{|U^1|} \iint_{U^1} u^{m+\eps} \dx x \dx t \right)^{1/2} \\
&\le \frac C \rho.
\end{align*}
\end{proof}
Now, we will show, that the previous lemma gives us a uniform lower bound for the integral averages of $u$ for almost every time level $t\in\left(0,\left(\frac 58\rho \right)^2\right)$, if the integral average at $t=0$ is large enough.
\begin{lemma}\label{average essinf lemma}
  Let $u$ be a non-negative weak supersolution in $\Omega_{T_0}$, such that
\[
|\{x\in B(x_0,\rho): u(x,t)>k^{1+\frac 1d}\}| \le  k^{-\frac 1d} |B(x_0,\rho)|,
\]
for every $k>1$ and for almost every $t\in (0,\rho^2).$ There exists $C>0$, such that if
\[
\fint_{B\left(x_0,\frac 12 \rho\right)} u(x,0) \dx x \ge 2C,
\]
then
\[
\essinf_{t\in \left (0,\left (\frac 58 \rho\right)^2\right )} \fint_{B\left (x_0,\frac 58\rho\right)} u(x,t) \dx x\ge C.
\]
\end{lemma}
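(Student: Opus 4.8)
The plan is to read off the bound from the gradient estimate of Lemma~\ref{uniform gradient bound} together with the integrated inequality~\eqref{integration by parts in time}. Note that the level set hypothesis is used only to invoke Lemma~\ref{uniform gradient bound}; after that it plays no further role.

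First I would fix a time-independent cut-off $\zeta\in C_0^\infty\big(B(x_0,\tfrac 58\rho)\big)$ with $0\le\zeta\le 1$, $\zeta\equiv 1$ on $B(x_0,\tfrac 12\rho)$ and $|\nabla\zeta|\le C/\rho$, and apply~\eqref{integration by parts in time} for almost every $\tau\in\big(0,(\tfrac 58\rho)^2\big)$. Using $0\le\zeta\le 1$, $\zeta\equiv 1$ on $B(x_0,\tfrac 12\rho)$ and $u\ge 0$, this gives
\[
\int_{B(x_0,\frac 58\rho)} u(x,\tau)\dx x \ge \int_{B(x_0,\frac 12\rho)} u(x,0)\dx x - \frac C\rho\int_0^{\tau}\!\!\int_{B(x_0,\frac 58\rho)} |\nabla u^m|\dx x\dx t .
\]
Since $\tau\le(\tfrac 58\rho)^2$, enlarging the time interval and applying Lemma~\ref{uniform gradient bound} bounds the last term by $\tfrac C\rho\cdot\tfrac C\rho\cdot(\tfrac 58\rho)^2\,|B(x_0,\tfrac 58\rho)|$, a purely dimensional multiple of $|B(x_0,\tfrac 58\rho)|$. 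Dividing through by $|B(x_0,\tfrac 58\rho)|$ then yields, for almost every such $\tau$,
\[
\fint_{B(x_0,\frac 58\rho)} u(x,\tau)\dx x \ge \Big(\tfrac 45\Big)^n\fint_{B(x_0,\frac 12\rho)} u(x,0)\dx x - C_0 ,
\]
with $C_0=C_0(m,n,d)$.

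Finally I would choose the constant $C$ in the statement large enough in terms of $C_0$ and the dimensional factor $(4/5)^n$ so that the hypothesis $\fint_{B(x_0,\frac 12\rho)}u(x,0)\dx x\ge 2C$ forces the right-hand side of the last display to be at least $C$; this is a one-line elementary estimate. As this lower bound is independent of $\tau$, it passes to the essential infimum over $\tau\in\big(0,(\tfrac 58\rho)^2\big)$, giving the claim. There is no real obstacle: the argument is essentially a single application of the two cited facts. The only points requiring attention are the bookkeeping of the dimensional constants — so that the threshold $2C$ on the initial average genuinely propagates to the lower bound $C$ uniformly in $\tau$ — and checking that~\eqref{integration by parts in time} is indeed available for the chosen nonnegative, time-independent spatial cut-off $\zeta$ and for almost every terminal time $\tau$, which is exactly the setting in which that inequality was established.
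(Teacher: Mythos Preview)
Your approach is essentially identical to the paper's: the same time-independent spatial cut-off, the inequality~\eqref{integration by parts in time}, and Lemma~\ref{uniform gradient bound} to control the gradient term. One small caveat worth flagging: your final ``one-line elementary estimate'' asks for a constant $C$ with $(4/5)^n\cdot 2C - C_0 \ge C$, which has no positive solution once $n\ge 4$ since then $2(4/5)^n<1$; the paper in fact silently drops the factor $(4/5)^n$ at the corresponding step, so neither argument quite delivers the literal ``$2$'' in the statement for all dimensions, though replacing ``$2$'' by a larger dimensional constant (harmless for the downstream use in Lemma~\ref{cold alternative}) repairs both.
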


\begin{proof}
  Let $\zeta \in C_0^\infty\left(B\left(x_0,\frac 58\rho\right)\right)$ be a cut-off function, such that $0\le \zeta\le 1$ and 
  \begin{align*}
 & \zeta=1\quad \text{in } B\left(x_0,\frac 12\rho\right) \quad \text{and}\\
&|\nabla \zeta|\le \frac{\widetilde{C}}{\rho}.  
  \end{align*}
By \eqref{integration by parts in time} we have
\begin{align*}
  &\fint_{B\left(x_0,\frac 58\rho\right)} u(x,\tau) \zeta(x) \dx x \\
&\ge   \fint_{B\left (x_0,\frac 58\rho\right)} u(x,0) \zeta(x) \dx x - \int_0^\tau  \fint_{B\left (x_0,\frac 58\rho\right)} |\nabla u^m \cdot \nabla \zeta| \dx x \dx t
\end{align*}
for almost every $\tau\in \left(0,\left(\frac 58\rho\right)^2\right)$. Using Lemma \ref{uniform gradient bound}, we may estimate
\[
\int_0^\tau  \fint_{B\left(x_0,\frac 58\rho\right)} |\nabla u^m \cdot \nabla \zeta |\dx x \dx t\le \frac{\widetilde{C}}{\rho} \int_0^\tau  \fint_{B\left(x_0,\frac 58\rho\right)} |\nabla u^m | \dx x \dx t \le C. 
\]
Thus, we have
\begin{align*}
&  \fint_{B\left(x_0,\frac 58\rho\right)} u(x,\tau) \zeta(x) \dx x  \\
&\ge \fint_{B\left(x_0,\frac 12\rho\right)} u(x,0) \dx x - \int_0^\tau  \fint_{B\left(x_0,\frac 58\rho\right)} |\nabla u^m \cdot \nabla \zeta| \dx x \dx t\\
&\ge C
\end{align*}
for almost every $\tau\in \left(0,\left(\frac 58\rho\right)^2\right)$. We conclude
\[
\essinf_{t\in \left(0,\left (\frac 58\rho\right)^2\right)} \fint_{B\left(x_0,\frac 58\rho\right)} u(x,t)\ge C.
\]

\end{proof}

We will prove the following simple lemma for the readers convenience. 

\begin{lemma}\label{simple measure lemma}
  Let $\Omega \subset \rn$ be a bounded domain and let $f$ be a measurable function in $\Omega$. Suppose that 
  \begin{align*}
    \fint_\Omega f \dx x \ge 2C \quad \text{and} \quad \left( \fint_\Omega f^q \dx x \right)^{1/q} \le \lambda C,
  \end{align*}
for some $\lambda \ge 2$ and $q\in (1,\infty]$. Then
\[
|\{x\in \Omega : f(x)> C \} | \ge \lambda^{\frac {-q}{q-1}} |\Omega|. 
\]
\end{lemma}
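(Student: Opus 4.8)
The plan is the classical ``split the average'' argument. I would start from the identity
\[
\fint_\Omega f \dx x = \frac{1}{|\Omega|}\int_{\{f\le C\}} f \dx x + \frac{1}{|\Omega|}\int_{\{f>C\}} f \dx x,
\]
and bound the first term on the right by $C$, using $f\le C$ on the set of integration together with $|\{f\le C\}|\le |\Omega|$. Combining this with the hypothesis $\fint_\Omega f\dx x\ge 2C$ immediately gives the lower bound
\[
\frac{1}{|\Omega|}\int_{\{f>C\}} f\dx x \ge C.
\]

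Next I would estimate the very same quantity from above by Hölder's inequality on the set $\{f>C\}$ with exponents $q$ and $\tfrac{q}{q-1}$ (for $q\in(1,\infty)$), rewriting the $L^q$-hypothesis as $\left(\int_\Omega f^q\dx x\right)^{1/q}\le \lambda C|\Omega|^{1/q}$:
\[
\int_{\{f>C\}} f\dx x \le \left(\int_\Omega f^q\dx x\right)^{1/q} |\{f>C\}|^{1-1/q} \le \lambda C\,|\Omega|^{1/q}\,|\{f>C\}|^{1-1/q}.
\]
Comparing the two displays, cancelling the factor $C>0$, and rearranging yields $|\Omega|^{1-1/q}\le \lambda\,|\{f>C\}|^{1-1/q}$; raising both sides to the power $\tfrac{q}{q-1}$ (which is legitimate since $t\mapsto t^{q/(q-1)}$ is increasing) gives exactly $|\{f>C\}|\ge \lambda^{-q/(q-1)}|\Omega|$. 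For the endpoint $q=\infty$ one simply replaces the Hölder step by $\int_{\{f>C\}} f\dx x\le \|f\|_{L^\infty(\Omega)}\,|\{f>C\}|\le \lambda C\,|\{f>C\}|$ and reads $\tfrac{q}{q-1}$ as $1$, obtaining the same conclusion.

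There is no genuine obstacle here; the proof is a two-line computation once the average is split. The only points deserving a word of care are that the normalised $L^q$-bound must be converted to an unnormalised one before being fed into Hölder, that we may assume $C>0$ (otherwise the statement is vacuous), and that, as in the intended application, $f\ge 0$ so that $\int_{\{f>C\}} f^q\dx x\le \int_\Omega f^q\dx x$; the hypothesis $\lambda\ge 2$ is in fact not needed for the argument.
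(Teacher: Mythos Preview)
Your proof is correct and follows essentially the same approach as the paper: split the average over $\{f\le C\}$ and $\{f>C\}$, bound the first piece by $C$, and control the second via H\"older's inequality together with the $L^q$ hypothesis. Your treatment is in fact slightly more careful than the paper's, since you explicitly handle the endpoint $q=\infty$ and note that the hypothesis $\lambda\ge 2$ plays no role in the argument.
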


\begin{proof}
  We have
  \begin{align*}
  2C &\le \fint_\Omega f \dx x = \frac 1 {|\Omega|} \left ( \int_{\{f>C\}} f \dx x + \int_{\{f\le C\}}  f \dx x \right)\\
&\le \frac 1 {|\Omega|}  \int_{\{f>C\}} f \dx x+C.
  \end{align*}
Denote $\Omega_C=\{x\in \Omega: f(x)>C\}.$ We use H\"older's inequality to control the first term on the right hand side by
\begin{align*}
  \frac 1 {|\Omega|}  \int_{\Omega_C} f \dx x &\le \frac 1 {|\Omega|}\left ( \int_{\Omega_C} f^q \dx x \right)^{1/q} |\Omega_C|^{(q-1)/q} \\
& \le \left ( \fint_\Omega f^q \dx x \right)^{1/q} \left( \frac {|\Omega_C|}{|\Omega|}\right)^{(q-1)/q} \\
&\le \lambda C \left( \frac {|\Omega_C|}{|\Omega|}\right)^{(q-1)/q}.
\end{align*}
Thus we have
\[
2C \le \lambda C  \left( \frac {|\Omega_C|}{|\Omega|}\right)^{(q-1)/q} + C,
\]
which implies 
\[
|\{x\in \Omega : f(x)>C\}| = |\Omega_C|\ge \lambda^{\frac{-q}{q-1}}|\Omega|,
\]
concluding the proof.

\end{proof}

Finally, we collect the results and apply Lemma \ref{simple measure lemma} to show that we'll end up in a situation, where we can apply the expansion of positivity.

\begin{lemma}\label{cold alternative}
  Let $d>1$ be as in Lemma \ref{expansion of positivity} and let $u$ be a weak supersolution in $\Omega_{T_0}$ , such that 
\[
|\{x\in B(x_0,\rho): u(x,t)>k^{1+\frac 1d}\}| \le  k^{-\frac 1d} |B(x_0,\rho)|
\]
for every $k>1$ and for almost every $t\in (0,\rho^2)$. There exists $M>0$, such that if
\[
\fint_{B\left(x_0,\frac 12\rho\right)} u(x,0) \dx x \ge M,
\]
then there exists $\tau \in \left(0,\left(\frac 58\rho \right)^2\right)$, such that 
\[
\left|\left\{x\in B\left (x_0,\frac 58\rho\right) : u(x,\tau) > C_1\right\}\right| \ge C_2 \left |B\left (x_0, \frac 58\rho\right)\right|,
\]
for some $C_1,C_2>0$ depending only on $m$ and $n$.
\end{lemma}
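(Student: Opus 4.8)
The strategy is to assemble the three preceding lemmas of this section. Fix once and for all an exponent $q$, depending only on $m$ and $n$, with $\max\{1,m-1\}<q<m+\frac2n$; such a $q$ exists since $m>1$ forces $m+\frac2n>1$. The plan is to produce, at a suitable time level $\tau\in(0,(5\rho/8)^2)$, a lower bound for $\fint_{B(x_0,5\rho/8)}u(x,\tau)\dx x$ and an upper bound for $\fint_{B(x_0,5\rho/8)}u(x,\tau)^q\dx x$, and then to feed these two estimates into Lemma \ref{simple measure lemma}.

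First I would apply Lemma \ref{average essinf lemma}. Let $c=c(m,n)>0$ be the constant it provides and set $M=2c$. Then the hypothesis $\fint_{B(x_0,\rho/2)}u(x,0)\dx x\ge M$ gives
\[
\fint_{B(x_0,5\rho/8)}u(x,t)\dx x\ge c\qquad\text{for almost every }t\in(0,(5\rho/8)^2).
\]
Next, the assumed level-set bound is precisely the hypothesis of Lemma \ref{uniform Lq bound}, which therefore yields $\fint_0^{(3\rho/4)^2}\fint_{B(x_0,3\rho/4)}u^q\dx x\dx t\le C_q$ with $C_q=C_q(m,n)$. Since $B(x_0,5\rho/8)\times(0,(5\rho/8)^2)\subset B(x_0,3\rho/4)\times(0,(3\rho/4)^2)$, comparing the two space-time averages gives
\[
\fint_0^{(5\rho/8)^2}\fint_{B(x_0,5\rho/8)}u^q\dx x\dx t\le\Big(\frac65\Big)^{n+2}C_q=:A,
\]
again a constant depending only on $m$ and $n$.

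Now I would select the time level. By Chebyshev's inequality in the variable $t$, the set of $t\in(0,(5\rho/8)^2)$ for which $\fint_{B(x_0,5\rho/8)}u(x,t)^q\dx x>2A$ has measure at most $\frac12(5\rho/8)^2$; intersecting its complement with the full-measure set on which the lower bound above holds, I may fix a single $\tau\in(0,(5\rho/8)^2)$ with
\[
\fint_{B(x_0,5\rho/8)}u(x,\tau)\dx x\ge c\qquad\text{and}\qquad\Big(\fint_{B(x_0,5\rho/8)}u(x,\tau)^q\dx x\Big)^{1/q}\le(2A)^{1/q}.
\]
Applying Lemma \ref{simple measure lemma} on $\Omega=B(x_0,5\rho/8)$ to $f=u(\cdot,\tau)$ with threshold $C_1:=c/2$, the exponent $q$, and $\lambda:=\max\{2,\,2(2A)^{1/q}/c\}$ --- for which $\lambda\ge2$, $\fint_\Omega f\dx x\ge c=2C_1$, and $\big(\fint_\Omega f^q\dx x\big)^{1/q}\le(2A)^{1/q}\le\lambda C_1$ --- then yields
\[
\big|\{x\in B(x_0,5\rho/8):u(x,\tau)>C_1\}\big|\ge\lambda^{-q/(q-1)}\big|B(x_0,5\rho/8)\big|.
\]
This is the asserted conclusion with $C_2:=\lambda^{-q/(q-1)}$; both $C_1$ and $C_2$, as well as $M=2c$, depend only on $m$ and $n$.

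This lemma is essentially a bookkeeping exercise combining earlier results, so no deep difficulty is expected. The one step meriting attention is the extraction of a single time slice $\tau$ that simultaneously carries the almost-everywhere lower bound of Lemma \ref{average essinf lemma} and the upper $L^q$ bound obtained by collapsing the space-time estimate of Lemma \ref{uniform Lq bound} to a fixed time level, together with the routine but necessary check that the constants $M$, $C_1$, $C_2$ really depend on nothing more than $m$ and $n$.
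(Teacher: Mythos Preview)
Your proof is correct and follows essentially the same route as the paper: apply Lemma~\ref{average essinf lemma} for the lower bound on spatial averages, Lemma~\ref{uniform Lq bound} for the $L^q$ control, select a good time slice, and feed both into Lemma~\ref{simple measure lemma}. Your treatment is in fact slightly more careful than the paper's---you make the choice of $q\in(\max\{1,m-1\},m+\tfrac2n)$ explicit and use Chebyshev in $t$ to pin down $\tau$, whereas the paper simply asserts that a suitable $\tau$ and $\lambda$ can be chosen.
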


\begin{proof}
Let $C$ be the constant given by Lemma \ref{average essinf lemma}. For $M$ large enough, we have 
\[
\fint_{B\left(x_0,\frac 12\rho\right)} u(x,0) \dx x \ge 2C,
\]
and thus we may use Lemma \ref{average essinf lemma} to get
\[
\essinf_{t\in\left(0,\left(\frac 58\rho \right)^2\right)} \fint_{B\left(x_0,\frac 58\rho\right)} u(x,t)\dx x \ge C.
\]
Take $q>1$. By Lemma \ref{uniform Lq bound}, we have
\[
\fint_0^{\left(\frac 58\rho\right)^2} \fint_{B\left(x_0,\frac 58\rho\right)} u^q \dx x \dx t \le \widetilde C \fint_0^{\left(\frac 34\rho\right)^2} \fint_{B\left(x_0,\frac 34\rho\right)} u^q \dx x \dx t \le \widetilde C.
\]
We may choose $\tau\in \left(0, \left(\frac 58\rho\right)^2\right)$ and $\lambda\ge 2$, such that 
\[
\fint_{B\left(x_0,\frac 58\rho\right)} u(x,\tau)^q \dx x \le \lambda C. 
\]
Now, applying Lemma \ref{simple measure lemma} gives 
\[
\left |\left \{x\in B\left (x_0, \frac 58\rho\right): u(x,\tau)> C\right\}\right | \ge \lambda^{\frac{-q}{q-1}}\left|B\left(x_0,\frac 58\rho\right)\right|.
\]
Therefore, the Lemma holds for $C_1=C$ and $C_2=\lambda^{\frac{-q}{q-1}}$. 
\end{proof}

\section{Proof of Theorem \ref{main theorem}}
We are now ready to prove the main theorem. The idea of the proof is the following. Either $u$ is large in a large portion of the ball $B(x_0,\rho)$ at some time level $s$ or this does not happen at any time level. In the first case we may apply Lemma \ref{expansion of positivity} to show, that $u$ is essentially bounded from below by $\eta>0$. In the latter case we utilize Lemma \ref{cold alternative} to end up in a situation, where Lemma \ref{expansion of positivity} can be applied. After this, the conclusion follows from a scaling argument.  
\begin{proof}
  [Proof of Theorem \ref{main theorem}]
Denote 
\[
N=\fint_{B(x_0,\rho)}u(x,t_0) \dx x.
\]
We may assume $N>0$. Define a scaled function
\[
v(x,t)=\frac M N u\left(2x,t_0+\left (\frac M N \right)^{m-1}t \right),
\]
where $M$ is as in Lemma \ref{cold alternative}. Since $u$ is a weak supersolution in $\Omega_T$, $v$ is a weak supersolution in $\tilde\Omega\times \left( 0, \left(\frac N M \right)^{m-1}(T_0-t_0)\right)$. Moreover, $v$ has the property
\[
\fint_{B\left(x_0,\frac 12\rho\right)} v(x,0)\dx x = M.
\]
One of the following alternatives holds. Either, there exists a time level $s\in(0,\rho^2)$ and a constant $k>1$, such that 
\begin{equation}\label{hot alternative}
|\{x\in B(x_0,\rho): v(x,s)> k^{1+\frac 1d} \}| \ge \frac 1 {k^{\frac 1d}} |B(x_0,\rho)|
\end{equation}
or this does not hold for any pair $s$ and $k$. In the spirit of \cite{kuusi}, we call the former alternative ``hot'' and the latter ``cold''. We will first consider the hot alternative. Suppose that there exists $s\in(0,\rho^2)$ and $k>1$, such that \eqref{hot alternative} holds. Then, in particular, we have
\[
|\{x\in B(x_0,\rho): v(x,s)> k\}| \ge \frac 1 {k^{\frac 1d}} |B(x_0,\rho)|
\]
and thus Lemma \ref{expansion of positivity} implies
\begin{equation}\label{hot lower bound}
v\ge \eta_0
\end{equation}
almost everywhere in $B(x_0,2\rho)$ for almost every
\[
t\in\left( \tilde t + \frac 12  \frac {b^{m-1}}{\eta_0^{m-1}} \delta \rho^2 , \tilde t + \frac {b^{m-1}}{\eta_0^{m-1}} \delta \rho^2 \right),
\]
where $\tilde t \in \left( s+ \frac 12 \frac \delta {k^{m-1}} \rho^2,s+  \frac \delta {k^{m-1}} \rho^2 \right).$ We approximate $\tilde t < 2\rho^2$. Since $\eta_0$ can be chosen to be as small as we please, we may assume, that \eqref{hot lower bound} holds for almost every $t\in \left(2\rho^2 + \frac 12 T_h,T_h \right)$, where $T_h= \frac{b^{m-1}}{\eta_0^{m-1}} \delta \rho^2$. 

Next, we deal with the cold alternative. Suppose, that \eqref{hot alternative} does not hold for any pair $s,k$. Then, by Lemma \ref{cold alternative}, there exist $C_1,C_2>0$, such that
\[
\left|\left\{x\in B\left (x_0,\frac 58\rho\right) : v(x,t) > C_1\right\}\right| \ge C_2 \left |B\left (x_0, \frac 58\rho\right)\right|.
\]
Applying Lemma \ref{expansion of positivity} twice and doing a similar approximation as in the hot alternative shows that there exist $\eta_1>0$ and $T_c=\frac{b^{m-1}}{\eta_1^{m-1}} \delta \rho^2$, such that
\begin{equation}
  \label{cold lower bound}
v\ge \eta_1
\end{equation}
almost everywhere in $B(x_0,2\rho)$ for almost every $t\in(2\rho^2 + \frac 12 T_c, T_c)$. Since either \eqref{hot lower bound} or \eqref{cold lower bound} holds, we may apply Lemma \ref{expansion of positivity} once more (adjusting the constants as necessary) to find $\tilde T \ge \frac{\max\{T_h,T_c\}}{\rho^2}$ and $\nu>0$, such that 
\[
v \ge \nu \quad \text{almost everywhere in } B(x_0,2\rho)\times \left(\frac 12 \tilde T \rho^2, \tilde T \rho^2\right)
\]
if
\begin{equation}
  \label{tilde T condition}
\tilde T \rho^2 \le \left( \frac N M \right)^{m-1}(T_0-t_0).
\end{equation}
For the function $u$, this reads
\[
u \ge \frac N M \nu \quad \text{almost everywhere in } B(x_0,4\rho)
\]
for almost every
\[
t\in  \left(t_0+ \frac 12 \left(\frac M N\right)^{m-1}\tilde T \rho^2,t_0+ \left(\frac M N\right)^{m-1} \tilde T \rho^2\right).
\]
Recalling the definition of $N$ and choosing the constans $C_1=\tilde T M^{m-1}$ and $C_2=\frac M \nu$, we obtain
\[
C_2 u \ge \fint_{B(x_0,\rho)}u(x,t_0) \dx x\quad \text{almost everywhere in } B(x_0,4\rho) \times \left(t_0+\frac 12 \tau, t_0+ \tau\right ),
\] 
if \eqref{tilde T condition} holds. Here 
\[
\tau= \min\left \{ T_0-t_0, C_1 \rho^2 \left(\fint_{B(x_0,\rho)}u(x,t_0) \dx x\right)^{1-m}\right\}.
\]
If \eqref{tilde T condition} does not hold, we have 
\[
\fint_{B(x_0,\rho)}u(x,t_0)\dx x \le \left (\frac {C_1\rho^2}{T_0-t_0}\right)^{1/(m-1)}
\]
and so we conclude
\[
\fint_{B(x_0,\rho)}u(x,t_0)\dx x \le \left (\frac {C_1\rho^2}{T_0-t_0}\right)^{1/(m-1)}+ C_2 \essinf_Q u,
\]
where $Q=B(x_0,4\rho)\times (t_0+\frac 12 \tau, t_0+\tau)$, thus proving the theorem.

\end{proof}

\vspace{0.5cm}
\noindent
\small{\textsc{Pekka Lehtel\"a},}
\small{\textsc{Department of Mathematics},}
\small{\textsc{P.O. Box 11100},}
\small{\textsc{FI-00076 Aalto University},}
\small{\textsc{Finland}}\\
\footnotesize{\texttt{pekka.lehtela@aalto.fi}}

\end{document}